\documentclass{amsart}

\usepackage{amssymb,amsmath,amsthm,amsfonts,mathtools}

\usepackage[
paperwidth = 8.5in,
paperheight = 11in,
left   = 1in,
right  = 1in,
top    = 1in,
bottom = 1in
]{geometry}

\usepackage{graphicx}
\usepackage{enumitem}
\usepackage{lmodern} 
\usepackage[font=small, labelfont=bf]{caption}
\usepackage[font=small, labelfont=bf]{subcaption}
\usepackage{booktabs}
\usepackage{xcolor}
\usepackage{tikz}
\usetikzlibrary{arrows.meta,positioning,calc,decorations.pathreplacing,backgrounds,fit,shapes.geometric}
\usepackage{pgfplots}
\pgfplotsset{compat=1.18}

\definecolor{royalpurple}{RGB}{99,6,161}
\definecolor{todoblue}{RGB}{0,70,180}
\definecolor{commentpurple}{RGB}{115,0,150}
\definecolor{commentorange}{RGB}{210,100,0}
\definecolor{suggestgreen}{RGB}{0,120,70}
\definecolor{aside}{RGB}{30,90,170}
\definecolor{bside}{RGB}{0,140,90}
\definecolor{shortc}{RGB}{20,110,190}
\definecolor{longc}{RGB}{210,80,15}
\definecolor{cutred}{RGB}{200,40,40}
\definecolor{corecol}{RGB}{30,90,170}
\definecolor{tilecol}{RGB}{0,120,80}
\definecolor{Ccol}{RGB}{170,80,0}
\definecolor{specblue}{RGB}{30,90,170}
\definecolor{matgreen}{RGB}{0,120,70}

\newtheorem{theorem}{Theorem}[section]
\newtheorem{lemma}[theorem]{Lemma}
\newtheorem{proposition}[theorem]{Proposition}
\newtheorem{corollary}[theorem]{Corollary}

\newtheorem{theoremletter}{Theorem}

\newtheorem*{theoremletterprime}{Theorem B$'$}

\theoremstyle{definition}
\newtheorem{definition}[theorem]{Definition}
\newtheorem{example}[theorem]{Example}

\theoremstyle{remark}
\newtheorem{remark}[theorem]{Remark}

\usepackage{algorithm}
\usepackage{algpseudocode}

\usepackage{upquote}

\usepackage{hyperref}
\hypersetup{
colorlinks=true,
linkcolor=blue,
citecolor=royalpurple,
urlcolor=blue
}

\newcommand{\RS}{\mathcal{R}}
\newcommand{\Aut}{\operatorname{Aut}}
\newcommand{\Exp}{\mathbb{E}}

\newcommand{\NB}{\mathrm{NB}}
\newcommand{\gmax}{g_{\max}}

\newcommand{\cyc}{\mathrm{cyc}}
\newcommand{\nonsimp}{\mathrm{nonsimp}}

\newcommand{\Sym}{\operatorname{Sym}}
\newcommand{\Gtau}{G^{\tau}}
\newcommand{\du}{\mathbin{\dot{\cup}}}
\newcommand{\tr}{\operatorname{tr}}

\numberwithin{equation}{section}

\begin{document}

\title{On the Genus Polynomial of Cubic Graphs}

\author[A. Ulrigg]{Austin Ulrigg}
\address{University of Washington}
\email{austinul@uw.edu}

\subjclass[2020]{Primary 05C10; Secondary 05C30, 05C31, 05C50, 05C85}
\keywords{Graph genus, genus distribution, genus polynomial, rotation systems, cubic graphs, cycle matroid, Whitney twist, non-backtracking matrix, cospectral graphs.}

\begin{abstract}
The orientable genus polynomial of a graph counts its cellular embeddings by genus. For finite simple $2$-connected cubic graphs it is a cycle-matroid invariant: $M(G)\cong M(H)$ implies $\Gamma_G=\Gamma_H$. The adjacency spectrum and the genus polynomial are incomparable: neither determines the other. We exhibit connected cubic graphs on $16$ vertices sharing the adjacency spectrum, spanning-tree count, girth, diameter, vertex and edge connectivity, automorphism-group order, and cycle counts through length $10$, yet with pairwise distinct genus polynomials. Splitting the expected face count at twice the girth explains the difference: short faces are spectral, long faces are not. We construct an explicit infinite family of connected cospectral cubic pairs $(G_t,H_t)$ on $14+2t$ vertices whose minimum genera differ. We also compute the genus polynomials of all $7{,}875{,}918$ connected cubic graphs through $22$ vertices and derive from short-cycle counts a deterministic lower bound on the minimum genus.
\end{abstract}
\maketitle

\section{Introduction}

The \emph{orientable genus} of a graph is the least number of handles that must be added to the sphere before the graph can be drawn on the resulting surface without crossings. Computing the orientable genus of a graph is hard: deciding whether a graph has genus at most $k$ is NP-complete~\cite{Thomassen}, although for each fixed surface there is a linear-time embedding algorithm~\cite{Mohar1999}. Every cellular embedding of a connected graph in an orientable surface is described, uniquely, by a cyclic ordering of the edges around each vertex called its \emph{rotation system}, and every rotation system describes a 2-cell embedding~\cite{Edmonds,GrossTucker,MoharThomassen}. Cellular embeddings are therefore finite combinatorial objects, and questions about embeddings can be described equivalently as questions about rotation systems.

For a connected cubic graph $G$ on $n$ vertices, each vertex has exactly two cyclic orders, so $G$ has precisely $2^n$ rotation systems. The genus of the embedding determined by a rotation system $\rho$ depends only on its face count $F(\rho)$, through Euler's formula:
\begin{equation}\label{eq:intro-genus-from-faces}
g(\rho)=1+\frac n4-\frac12\,F(\rho),
\qquad\text{so}\qquad
\gamma(G)=1+\frac n4-\frac12\max_\rho F(\rho).
\end{equation}
The minimum genus is one extreme of the distribution which we record by the \emph{orientable genus polynomial}
\begin{equation}\label{eq:intro-genus-poly}
\Gamma_G(x)=\sum_{g\ge 0} N_g(G)\,x^{g},
\end{equation}
where $N_g(G)$ is the number of rotation systems of $G$ whose embedding has genus $g$ (Figure~\ref{fig:genus-surfaces}). Its least exponent is the minimum genus $\gamma(G)$. Its greatest is the maximum genus $\gmax(G)$, which is computable by Xuong's theorem~\cite{Xuong}. By Duke's interpolation theorem~\cite{Duke}, every orientable genus in between is realized by at least one rotation system.

\begin{figure}[H]
\centering
\begin{tikzpicture}[line cap=round, line join=round, >=Latex,
   dot/.style={circle, fill=black, inner sep=1.2pt},
   surf/.style={specblue, line width=0.9pt},
   sfill/.style={fill=specblue!7}]
\coordinate (O1) at (0,1.05); \coordinate (O2) at (-0.95,-0.5); \coordinate (O3) at (0.95,-0.5);
\coordinate (I1) at (0,0.42);  \coordinate (I2) at (-0.38,-0.16); \coordinate (I3) at (0.38,-0.16);
\draw[black!75,line width=0.9pt] (O1)--(O2)--(O3)--cycle;
\draw[black!75,line width=0.9pt] (I1)--(I2)--(I3)--cycle;
\draw[black!75,line width=0.9pt] (O1)--(I1) (O2)--(I2) (O3)--(I3);
\foreach \p in {O1,O2,O3,I1,I2,I3}{\node[dot] at (\p){};}
\node[font=\scriptsize,anchor=north] at (0,-0.72) {prism $K_3\,\square\,K_2$};
\draw[->,gray!70,line width=0.8pt] (1.25,0.1) -- (1.95,0.1);
\node[font=\scriptsize,align=center,gray!55!black] at (1.6,0.5) {$64$ rotation\\ systems};
\begin{scope}[shift={(3.05,0.1)}]
  \draw[surf,sfill] (0,0) circle (0.48);
  \draw[surf,line width=0.55pt] (0,0) ellipse (0.48 and 0.16);
  \node[font=\scriptsize,anchor=north] at (0,-0.58) {sphere};
  \node[font=\small,specblue!55!black,anchor=north] at (0,-0.88) {$\mathbf{2}$ \;{\scriptsize$x^{0}$}};
\end{scope}
\begin{scope}[shift={(5.0,0.1)}]
  \draw[surf,sfill] (0,0) ellipse (0.68 and 0.4);
  \draw[surf,line width=0.7pt] (-0.29,0.03) .. controls (-0.12,-0.10) and (0.12,-0.10) .. (0.29,0.03);
  \draw[surf,line width=0.7pt] (-0.22,-0.02) .. controls (-0.10,0.07) and (0.10,0.07) .. (0.22,-0.02);
  \node[font=\scriptsize,anchor=north] at (0,-0.52) {torus};
  \node[font=\small,specblue!55!black,anchor=north] at (0,-0.82) {$\mathbf{38}$ \;{\scriptsize$x^{1}$}};
\end{scope}
\begin{scope}[shift={(7.35,0.1)}]
  \draw[surf,sfill] (0,0) ellipse (1.02 and 0.4);
  \foreach \cx in {-0.48,0.48}{
    \draw[surf,line width=0.7pt] (\cx-0.26,0.03) .. controls (\cx-0.10,-0.10) and (\cx+0.10,-0.10) .. (\cx+0.26,0.03);
    \draw[surf,line width=0.7pt] (\cx-0.19,-0.02) .. controls (\cx-0.08,0.07) and (\cx+0.08,0.07) .. (\cx+0.19,-0.02);}
  \node[font=\scriptsize,anchor=north] at (0,-0.52) {genus $2$};
  \node[font=\small,specblue!55!black,anchor=north] at (0,-0.82) {$\mathbf{24}$ \;{\scriptsize$x^{2}$}};
\end{scope}
\end{tikzpicture}
\caption{The triangular prism $K_3\,\square\,K_2$ has $\Gamma=2+38x+24x^{2}$.}
\label{fig:genus-surfaces}
\end{figure}

The mean of the same distribution, the \emph{average genus}, lies well above the minimum: among $3$-connected graphs only $K_4$ has average genus below $1$~\cite{GrossKleinRieper}, and most of a genus distribution typically lies in the interior of its range, as it does for bouquets~\cite{Stahl}. A uniformly random rotation system is usually a poor estimate of $\gamma(G)$~\cite{ChenGao}. The genus distribution, therefore, contains strictly more topological information than the minimum genus alone. The orientable genus polynomial (which we interchangeably call the \emph{genus distribution}) has been studied since Gross and Furst placed the embedding-distribution invariants in a hierarchy~\cite{GrossFurst}, and it has been computed for many families, including bouquets of circles, infinite classes~\cite{GrossRobbinsTucker,FurstGrossStatman}, and series--parallel and cubic graphs~\cite{ChenGrossRieper,GrossKotrbcik}. Two conjectures motivated much of this development and have since been refuted: Stahl's conjecture that \emph{every genus polynomial is real-rooted}~\cite{StahlZeros} was disproved by Chen and Liu~\cite{ChenLiu}, and cubic genus polynomials with non-real roots have been discovered by Carr, Dhaliwal, and Mohar~\cite{CarrDhaliwalMohar}. The Gross--Robbins--Tucker \emph{log-concavity conjecture}~\cite{GrossRobbinsTucker} was disproved in general by Mohar~\cite{MoharLogConvex}. Our census relates to the first refuted conjecture. Among connected cubic graphs on $10,12,14,16,18$ vertices, exactly $1,5,26,194,2022$ have a genus polynomial with a non-real root, and at $n=20$ and $n=22$ the counts are $33{,}890$ and $578{,}044$ (Section~\ref{sec:census}). Our census reproduces the polynomials of~\cite{CarrDhaliwalMohar} exactly and corrects the non-real-root totals reported there ($2,41,178$ at $n=10,14,16$). The authors have confirmed the revised counts and posted an updated version (M.~Carr, personal correspondence, July 2026).

Matroids were introduced by Whitney to study what the linearly independent
subsets of the columns of a matrix and the acyclic subsets of the edges of a
graph have in common~\cite{Whitney35}. For a graph, the canonical matroid is called the \emph{cycle matroid}, $M(G)$. Its ground set is $E(G)$, and a set of edges is independent exactly when it contains no cycle. Distinct graphs can share a cycle matroid, and Whitney's $2$-isomorphism theorem classifies exactly which do~\cite{Whitney}. Another graph polynomial, the Tutte polynomial, is already known to be a cycle-matroid invariant~\cite{Oxley}. One can naturally ask whether the genus polynomial is also a cycle-matroid invariant, and there is no obvious reason to expect it to be. In fact, a general Whitney twist has no evident action on rotation systems. This paper studies which invariants of $G$ determine $\Gamma_G$, and conversely what $\Gamma_G$ determines.

\subsection*{The cubic genus polynomial is a cycle-matroid invariant.}

Our first result shows that the cycle matroid determines $\Gamma_G$ for $2$-connected cubic graphs. 

\begin{theoremletter}\label{thmA}
Let $G$ and $H$ be finite simple $2$-connected cubic graphs. If $M(G)\cong M(H)$, then
\[
\Gamma_G(x)=\Gamma_H(x).
\]
Equivalently, the orientable genus polynomial of a finite simple $2$-connected cubic graph is invariant under balanced Whitney twists.
\end{theoremletter}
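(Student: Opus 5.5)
The plan is to reduce Theorem~\ref{thmA} to a local statement about a single Whitney twist, and then prove that local statement with an explicit bijection on rotation systems that preserves face counts. I would proceed in three steps.

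\textbf{Step 1: reduce to balanced twists.} By Whitney's $2$-isomorphism theorem~\cite{Whitney}, since $G$ and $H$ are $2$-connected and $M(G)\cong M(H)$, $H$ is obtained from $G$ by a finite sequence of Whitney twists. Consider one twist at a $2$-separation $\{u,v\}$ with sides $A$ and $B$. Let $d_A(u),d_A(v)$ be the numbers of edges of $A$ at $u$ and $v$. After the twist, $u$ has degree $d_B(u)+d_A(v)$. Hence the result is cubic exactly when $d_A(u)=d_A(v)$; this is what I call a \emph{balanced} twist.

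For a cubic graph each side meets $u$ in one or two edges. After possibly swapping the names $A$ and $B$, I may therefore assume $d_A(u)=d_A(v)=1$. Write $e=ua$ and $f=vb$ for these two edges. Then $\{e,f\}$ is a $2$-edge cut, and the twisted graph replaces them by $e'=va$ and $f'=ub$.

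The genuinely delicate point is whether a sequence of twists between two cubic graphs can be taken through cubic (and simple) intermediates. I would argue this as follows.
\begin{itemize}[leftmargin=*]
\item Twists at distinct $2$-separations can be reordered, using the standard description of $2$-isomorphism via the tree of $3$-connected components (Tutte/Cunningham--Edmonds decomposition).
\item In that decomposition, $M(G)\cong M(H)$ means that the two graphs differ only in how the virtual edges of each component are glued, i.e.\ which orientation each gluing uses.
\item Each such reorientation is a single twist at one $2$-separation.
\item If this twist took a cubic graph to a cubic graph, it is balanced by the degree count above. So the comparison can be done one gluing at a time, with every intermediate graph cubic.
\end{itemize}
Simplicity is not needed for the bijection below; it works for cubic multigraphs. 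So only cubicity of the intermediates matters.

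\textbf{Step 2: the bijection.} Given a rotation system $\rho$ of $G$, define $\rho'$ on $H$ as follows.
\begin{itemize}[leftmargin=*]
\item At vertices of $B$, keep the rotation, relabelling $e$ as $f'$ at $u$ and $f$ as $e'$ at $v$.
\item At every vertex of $A$, reverse the cyclic order (mirror $A$), relabelling $e\mapsto e'$ at $a$ and $f\mapsto f'$ at $b$.
\end{itemize}
This is clearly a bijection between the $2^n$ rotation systems of $G$ and those of $H$. It remains to show $F(\rho)=F(\rho')$.

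Faces lying entirely inside $A$ or inside $B$ correspond bijectively. Faces inside $B$ are unchanged. Faces inside $A$ are reversed, which preserves the number of faces.

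\textbf{Step 3: faces crossing the cut (the main obstacle).} Every face boundary crosses the $2$-edge cut an even number of times. The four darts $u\!\to\!a$, $a\!\to\!u$, $v\!\to\!b$, $b\!\to\!v$ lie on either one or two crossing faces.

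Face tracing inside $A$ gives a perfect matching of the two entering darts $\{u\!\to\!a,\,v\!\to\!b\}$ with the two leaving darts $\{a\!\to\!u,\,b\!\to\!v\}$. Similarly $B$ gives a matching of its own entering and leaving darts. The number of crossing faces is the number of cycles in the permutation obtained by composing these two matchings.

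I would then check directly that mirroring $A$ and swapping its attachment points $a$ and $b$ has the following effect. It replaces the $A$-matching by its inverse, transported along $u\leftrightarrow v$. This composition has the same cycle count in both of the possible cases: the "parallel" pairing, where $A$ sends $a$ back to $a$, and the "crossed" pairing, where $A$ sends $a$ to $b$.

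The same calculation shows that the portions of crossing faces strictly inside $A$ and strictly inside $B$ are carried along unchanged. Hence $F(\rho')=F(\rho)$. By Euler's formula \eqref{eq:intro-genus-from-faces} the genera agree, so $\Gamma_G=\Gamma_H$.

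The case analysis of the two pairings in Step 3 and the cubic-intermediate argument in Step 1 are the two places I expect to need the most care. Of these, Step 1 is the subtler one.
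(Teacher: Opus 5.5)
Steps 2 and 3 are sound. Your map is a variant of the paper's involution $\Theta$: the paper keeps the $(1,1)$ side and re-glues the $(2,2)$ side, while you re-glue the $(1,1)$ side and also mirror it. The mirroring is harmless but unnecessary. The induced permutation of the two cut vertices lies in $S_2$, so inverting it and conjugating it by the swap both leave it unchanged, and $c_A$, $c_B$ and the crossing count are preserved either way.

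\textbf{The gap is in Step 1.} You claim that in the Tutte/Cunningham--Edmonds tree two $2$-isomorphic graphs ``differ only in which orientation each gluing uses.'' This is false. An $S$-node is realized by a cycle, and the cycle matroid does not determine the cyclic order of that cycle's members. So each $S$-node carries extra assembly data, its cyclic arrangement, which no tree-edge re-gluing changes. The paper makes this concrete in its remark that tree-edge twists alone do not suffice. Stringing beads in the order $A,A,B,B$ versus $A,B,A,B$ around a single $S$-node gives two non-isomorphic simple $2$-connected cubic graphs on $20$ vertices. They have the same cycle matroid, yet no sequence of tree-edge twists connects them. Changing an arrangement needs twists across two skeleton vertices of the $S$-node, that is, arc reversals. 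These are not automatically balanced: reversing an arc with an even number of members creates a vertex of degree $2$ and one of degree $4$. So Whitney's theorem gives you a chain that may leave the cubic class, and your argument has no mechanism for avoiding that.

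\textbf{What is missing.} You need to show the required rearrangements can be done with balanced moves only. The paper's argument runs as follows.
\begin{enumerate}
\item In a cubic graph, real and virtual members alternate around every $S$-node. A real member contributes $1$ edge at each end and a virtual member contributes $2$; ruling out a contribution of $1$ uses reducedness of the decomposition.
\item Hence an arc reversal is balanced exactly when the arc has an odd number of members.
\item A three-member reversal transposes two same-kind members two positions apart. These moves generate every alternating rearrangement.
\item Single-member reversals then reset the gluing identifications that the three-member moves disturbed.
\end{enumerate}

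Your last bullet (``if this twist took a cubic graph to a cubic graph, it is balanced'') is also circular. You must know the flip is balanced in the current intermediate graph before you can conclude the result is cubic. The paper supplies this separately. In a cubic graph's decomposition, every tree edge has an $S$- or $P$-node endpoint, since two adjacent $R$-nodes would force degree at least $4$ at a cut vertex. Every tree edge therefore displays a balanced separation, so each flip is balanced and cubicity propagates by induction.
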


Whitney's $2$-isomorphism theorem~\cite{Whitney} states that two $2$-connected graphs have isomorphic cycle matroids exactly when one is obtained from the other by a sequence of Whitney twists. Theorem~\ref{thmA} says that the genus polynomial of a cubic graph is invariant under this equivalence. We note that the cubic hypothesis is \textit{necessary}. For non-cubic graphs, the genus polynomial is in general not a cycle-matroid invariant. Remark~\ref{rem:cubic-necessary} exhibits a $2$-isomorphic non-cubic pair with distinct genus polynomials. In principle, the matroid result could be weak for cubic graphs if the genus polynomial were already determined by standard invariants. Our second result shows this is not the case.

\begin{figure}[H]
\centering
\begin{subfigure}[t]{0.22\textwidth}\centering
\resizebox{\linewidth}{!}{%
\begin{tikzpicture}[line cap=round,
   v/.style={circle,fill=specblue!85!black,inner sep=1.5pt},
   e/.style={specblue!50!black,line width=0.8pt}]
  \draw[e] (1.112,-0.624)--(1.378,0.022);
  \draw[e] (1.112,-0.624)--(0.293,-1.29);
  \draw[e] (1.112,-0.624)--(1.012,0.468);
  \draw[e] (-1.261,0.451)--(-1.55,-0.493);
  \draw[e] (-1.261,0.451)--(-0.639,0.995);
  \draw[e] (-1.261,0.451)--(-0.311,0.341);
  \draw[e] (-0.764,-1.366)--(-1.55,-0.493);
  \draw[e] (-0.764,-1.366)--(-1.134,-0.842);
  \draw[e] (-0.764,-1.366)--(0.293,-1.29);
  \draw[e] (-0.6,-0.029)--(-0.639,0.995);
  \draw[e] (-0.6,-0.029)--(-1.134,-0.842);
  \draw[e] (-0.6,-0.029)--(0.442,-0.225);
  \draw[e] (0.352,1.313)--(-0.639,0.995);
  \draw[e] (0.352,1.313)--(1.012,0.468);
  \draw[e] (0.352,1.313)--(1.204,1.066);
  \draw[e] (-0.002,-0.67)--(-0.311,0.341);
  \draw[e] (-0.002,-0.67)--(0.293,-1.29);
  \draw[e] (-0.002,-0.67)--(0.442,-0.225);
  \draw[e] (0.467,0.882)--(-0.311,0.341);
  \draw[e] (0.467,0.882)--(1.012,0.468);
  \draw[e] (0.467,0.882)--(1.204,1.066);
  \draw[e] (1.378,0.022)--(0.442,-0.225);
  \draw[e] (1.378,0.022)--(1.204,1.066);
  \draw[e] (-1.55,-0.493)--(-1.134,-0.842);
  \node[v] at (1.112,-0.624) {};
  \node[v] at (-1.261,0.451) {};
  \node[v] at (-0.764,-1.366) {};
  \node[v] at (-0.6,-0.029) {};
  \node[v] at (0.352,1.313) {};
  \node[v] at (-0.002,-0.67) {};
  \node[v] at (0.467,0.882) {};
  \node[v] at (1.378,0.022) {};
  \node[v] at (-1.55,-0.493) {};
  \node[v] at (-0.639,0.995) {};
  \node[v] at (-0.311,0.341) {};
  \node[v] at (-1.134,-0.842) {};
  \node[v] at (0.293,-1.29) {};
  \node[v] at (0.442,-0.225) {};
  \node[v] at (1.012,0.468) {};
  \node[v] at (1.204,1.066) {};
\end{tikzpicture}}
\caption*{\scriptsize $\Gamma_{G_1}=12x+2036x^{2}+26432x^{3}+37056x^{4}$}
\end{subfigure}
\hfill
\begin{subfigure}[t]{0.22\textwidth}\centering
\resizebox{\linewidth}{!}{%
\begin{tikzpicture}[line cap=round,
   v/.style={circle,fill=specblue!85!black,inner sep=1.5pt},
   e/.style={specblue!50!black,line width=0.8pt}]
  \draw[e] (-0.494,-0.338)--(0.366,0.207);
  \draw[e] (-0.494,-0.338)--(-1.24,0.176);
  \draw[e] (-0.494,-0.338)--(-0.181,-1.187);
  \draw[e] (0.737,-1.021)--(1.186,-0.182);
  \draw[e] (0.737,-1.021)--(-0.181,-1.187);
  \draw[e] (0.737,-1.021)--(0.74,-0.405);
  \draw[e] (-1.55,-0.222)--(-1.225,0.794);
  \draw[e] (-1.55,-0.222)--(-0.91,-0.897);
  \draw[e] (-1.55,-0.222)--(-1.24,0.176);
  \draw[e] (-0.314,1.164)--(-1.225,0.794);
  \draw[e] (-0.314,1.164)--(-0.212,0.527);
  \draw[e] (-0.314,1.164)--(0.635,0.937);
  \draw[e] (-0.142,-0.402)--(-0.91,-0.897);
  \draw[e] (-0.142,-0.402)--(0.74,-0.405);
  \draw[e] (-0.142,-0.402)--(-0.212,0.527);
  \draw[e] (1.147,0.606)--(0.74,-0.405);
  \draw[e] (1.147,0.606)--(0.635,0.937);
  \draw[e] (1.147,0.606)--(1.458,0.244);
  \draw[e] (0.366,0.207)--(-0.212,0.527);
  \draw[e] (0.366,0.207)--(1.458,0.244);
  \draw[e] (1.186,-0.182)--(0.635,0.937);
  \draw[e] (1.186,-0.182)--(1.458,0.244);
  \draw[e] (-1.225,0.794)--(-1.24,0.176);
  \draw[e] (-0.91,-0.897)--(-0.181,-1.187);
  \node[v] at (-0.494,-0.338) {};
  \node[v] at (0.737,-1.021) {};
  \node[v] at (-1.55,-0.222) {};
  \node[v] at (-0.314,1.164) {};
  \node[v] at (-0.142,-0.402) {};
  \node[v] at (1.147,0.606) {};
  \node[v] at (0.366,0.207) {};
  \node[v] at (1.186,-0.182) {};
  \node[v] at (-1.225,0.794) {};
  \node[v] at (-0.91,-0.897) {};
  \node[v] at (-1.24,0.176) {};
  \node[v] at (-0.181,-1.187) {};
  \node[v] at (0.74,-0.405) {};
  \node[v] at (-0.212,0.527) {};
  \node[v] at (0.635,0.937) {};
  \node[v] at (1.458,0.244) {};
\end{tikzpicture}}
\caption*{\scriptsize $\Gamma_{G_2}=20x+2028x^{2}+26304x^{3}+37184x^{4}$}
\end{subfigure}
\hfill
\begin{subfigure}[t]{0.22\textwidth}\centering
\resizebox{\linewidth}{!}{%
\begin{tikzpicture}[line cap=round,
   v/.style={circle,fill=specblue!85!black,inner sep=1.5pt},
   e/.style={specblue!50!black,line width=0.8pt}]
  \draw[e] (-0.674,-1.55)--(-1.296,-0.753);
  \draw[e] (-0.674,-1.55)--(0.129,-1.127);
  \draw[e] (-0.674,-1.55)--(-0.654,-1.049);
  \draw[e] (0.154,-0.491)--(0.861,0.263);
  \draw[e] (0.154,-0.491)--(-0.654,-1.049);
  \draw[e] (0.154,-0.491)--(-0.501,-0.283);
  \draw[e] (-0.887,0.702)--(-0.152,1.265);
  \draw[e] (-0.887,0.702)--(-0.501,-0.283);
  \draw[e] (-0.887,0.702)--(-1.147,0.189);
  \draw[e] (-0.171,0.69)--(1.015,0.884);
  \draw[e] (-0.171,0.69)--(-1.147,0.189);
  \draw[e] (-0.171,0.69)--(0.345,0.403);
  \draw[e] (0.846,-0.523)--(0.129,-1.127);
  \draw[e] (0.846,-0.523)--(0.345,0.403);
  \draw[e] (0.846,-0.523)--(1.473,0.039);
  \draw[e] (-1.296,-0.753)--(-0.654,-1.049);
  \draw[e] (-1.296,-0.753)--(-1.147,0.189);
  \draw[e] (0.861,0.263)--(1.473,0.039);
  \draw[e] (0.861,0.263)--(0.659,1.342);
  \draw[e] (-0.152,1.265)--(0.345,0.403);
  \draw[e] (-0.152,1.265)--(0.659,1.342);
  \draw[e] (1.015,0.884)--(1.473,0.039);
  \draw[e] (1.015,0.884)--(0.659,1.342);
  \draw[e] (0.129,-1.127)--(-0.501,-0.283);
  \node[v] at (-0.674,-1.55) {};
  \node[v] at (0.154,-0.491) {};
  \node[v] at (-0.887,0.702) {};
  \node[v] at (-0.171,0.69) {};
  \node[v] at (0.846,-0.523) {};
  \node[v] at (-1.296,-0.753) {};
  \node[v] at (0.861,0.263) {};
  \node[v] at (-0.152,1.265) {};
  \node[v] at (1.015,0.884) {};
  \node[v] at (0.129,-1.127) {};
  \node[v] at (-0.654,-1.049) {};
  \node[v] at (-0.501,-0.283) {};
  \node[v] at (-1.147,0.189) {};
  \node[v] at (0.345,0.403) {};
  \node[v] at (1.473,0.039) {};
  \node[v] at (0.659,1.342) {};
\end{tikzpicture}}
\caption*{\scriptsize $\Gamma_{G_3}=12x+2148x^{2}+25872x^{3}+37504x^{4}$}
\end{subfigure}
\caption{The graphs $G_1,G_2,G_3$ of Theorem~\ref{thmB}, matching the rows of Table~\ref{tab:n16-smallest-witness} top to bottom.}
\label{fig:witness16-intro}
\end{figure}

\begin{theoremletter}\label{thmB}
There exist finite simple connected cubic graphs, on as few as $16$ vertices, that have identical adjacency/Laplacian spectra, spanning tree counts, order, girth, diameter, vertex and edge connectivity, automorphism-group order, and the cycle counts through length $10$, yet have pairwise distinct orientable genus polynomials. In particular, the genus polynomial of a cubic graph is not determined by its spectrum.
\end{theoremletter}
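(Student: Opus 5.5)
Theorem~\ref{thmB} is an existence statement, so the proof is a certificate: exhibit three explicit graphs $G_1,G_2,G_3$ on $16$ vertices, namely those of Figure~\ref{fig:witness16-intro}, and verify each listed invariant. I would fix explicit edge lists, for instance as graph6 strings or adjacency lists in the table, so that every check can be reproduced independently. The witnesses are found by search. Enumerate all connected cubic graphs on $16$ vertices (e.g.\ with a standard generator), bucket them by characteristic polynomial of the adjacency matrix, and inside each cospectral bucket refine by the remaining invariants. Then compute $\Gamma$ for the survivors and keep a triple with pairwise distinct polynomials.

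The verification splits into cheap and expensive parts. Because the graphs are $3$-regular, $L=3I-A$, so equality of adjacency characteristic polynomials gives equal Laplacian spectra. The matrix-tree theorem then gives equal spanning tree counts, as $\tau=\frac1n\prod_{\mu\neq 0}\mu$. Girth and diameter come from breadth-first search. Vertex and edge connectivity come from max-flow, or from the bound $\kappa\le\lambda\le 3$ together with the fact that $\kappa=\lambda$ for cubic graphs. Automorphism group orders come from a canonical labelling tool. Cycle counts $c_k$ for $k\le 10$ are obtained by direct enumeration of simple cycles. This step is not implied by cospectrality: closed-walk counts $\tr A^k$ only give $c_3,c_4,c_5$ for graphs of equal degree sequence and girth, so $c_6,\dots,c_{10}$ must be counted explicitly.

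The genus polynomials are computed by the definition. For each of the $2^{16}=65{,}536$ rotation systems, trace the faces by the permutation $\rho\circ\theta$ on darts, count orbits $F(\rho)$, and set $g=1+4-F/2$ via~\eqref{eq:intro-genus-from-faces}. Tally the results, check that each total is $2^{16}$, and confirm the three polynomials displayed in Figure~\ref{fig:witness16-intro}. Their $x^1$ coefficients are $12,20,12$ and their $x^2$ coefficients are $2036,2028,2148$, so the three are pairwise distinct. As an internal check, the least exponent, here genus $1$ since none of these graphs is planar, and the greatest exponent should agree with independent planarity testing and with Xuong's theorem respectively.

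The main obstacle is the search, not the verification: it has to find cospectral triples that also agree on all the finer invariants, especially automorphism order and $c_6,\dots,c_{10}$. I would filter in increasing order of cost and compute $\Gamma$ only at the end. To support the phrase "on as few as $16$ vertices", I would also describe the exhaustive search at $n\le 14$ showing that no cospectral pair with the listed coincidences and distinct $\Gamma$ exists there. The final clause, that the genus polynomial is not determined by the spectrum, then follows immediately, since $G_1$ and $G_2$ already suffice.
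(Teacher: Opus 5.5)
Your proposal is correct and matches the paper's proof. The paper's proof is likewise a computational certificate: the three graphs are given by graph6 codes in Table~\ref{tab:n16-smallest-witness}, and the shared invariants and the three genus polynomials (computed from all $2^{16}$ rotation systems) are checked by exhaustive computation over the connected cubic census, item~(D2) of Appendix~\ref{app:data}.

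One caution concerns minimality. The paper only claims that no smaller such \emph{triple} exists at orders $8$--$16$. Your search at $n\le 14$ should therefore be phrased for triples, rather than presuming no qualifying \emph{pair} exists there, which is a stronger claim the paper does not make. In any case, the existence statement itself does not depend on minimality.
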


The reverse implication is also false.

\begin{theoremletterprime}
There exist finite simple $3$-connected cubic graphs $G$ and $H$ on $12$ vertices with $\Gamma_G=\Gamma_H$ but distinct adjacency spectra. Together with Theorem~\ref{thmB}, this shows that the orientable genus polynomial of a cubic graph and its adjacency spectrum are \emph{incomparable}.
\end{theoremletterprime}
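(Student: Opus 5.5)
Theorem~B$'$ is an existence statement, so the proof is an explicit witness pair together with a certified computation of both invariants. The census of Section~\ref{sec:census} enumerates all connected cubic graphs on $12$ vertices; there are $85$ of them. Among these I would keep the $3$-connected ones and group them by their genus polynomial. Within each group I would look for two graphs whose adjacency characteristic polynomials differ. The natural first place to look is Whitney-twist pairs, since by Theorem~\ref{thmA} they automatically share $\Gamma$. But a $3$-connected graph admits no nontrivial Whitney twist, so the coincidence has to be \emph{accidental}. This means a genuine search over the census is required rather than a structural construction.

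For the chosen pair $G,H$ I would certify $\Gamma_G=\Gamma_H$ as follows. Enumerate all $2^{12}=4096$ rotation systems of each graph. For each one, trace the faces by the usual permutation product (the rotation composed with the edge involution on the $36$ darts) and convert the face count to a genus via \eqref{eq:intro-genus-from-faces}. For reproducibility, I would also reduce the work by fixing the rotation at one vertex and doubling, since reversing all rotations preserves $F$. Two consistency checks guard against errors:
\begin{itemize}
\item the coefficients must sum to $4096$;
\item the top degree must match the maximum genus given by Xuong's theorem~\cite{Xuong}.
\end{itemize}
It also helps to record a realistic shape for the common polynomial (degree at most $3$, since $g\le 1+n/4-1/2=3.5$) in the proof.

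To certify that the spectra differ, it suffices to exhibit one closed-walk count that disagrees, that is, a different $\tr A^k$ for small $k$. For cubic graphs, $\tr A^2=36$ always. The traces $\tr A^3 = 6\cdot\#\text{triangles}$ and $\tr A^4$ (which is determined by the number of $4$-cycles) are the first candidates. The simplest certificate is therefore a pair with equal $\Gamma$ but different triangle or $4$-cycle counts. Failing that, I would list both characteristic polynomials explicitly. Three-connectivity of each graph would be verified directly, for instance by checking that no two vertices separate the graph. Incomparability then follows by combining this pair with Theorem~\ref{thmB}.

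The main obstacle is existence: equal genus polynomials among non-twist-related $3$-connected graphs may be rare. I would first try the case where the two graphs differ in girth or triangle count. Within a group of graphs that share $\Gamma$, girth differences already force different traces, so if such a group exists it gives the cleanest witnesses.
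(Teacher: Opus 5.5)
Your plan is essentially the paper's proof. Example~\ref{ex:converse} is exactly such a census-found pair of $3$-connected cubic graphs on $12$ vertices sharing $\Gamma(x)=64x+1408x^{2}+2624x^{3}$, and the paper separates the spectra by spanning-tree counts ($7938\ne 8100$, which for regular graphs are a function of the adjacency spectrum by the matrix-tree theorem). Your trace certificate also works on that pair, and is even cheaper: one of its two graphs is bipartite while the other contains a triangle, so $\tr A^{3}$ already differs (the girths are $4$ and $3$).
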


We give the pair in Example~\ref{ex:converse}.

Which graphs are determined by their spectrum is a long-standing question in spectral graph theory~\cite{vanDamHaemers,vanDamHaemers2009}, and cospectral pairs are common enough to have been enumerated systematically for all graphs on at most eleven vertices~\cite{HaemersSpence}. The standard way to show that an invariant is not spectral is to produce a cospectral pair on which it differs. Theorem~\ref{thmB} does this for the orientable genus polynomial, a role in which, to our knowledge, it has not previously appeared. Figure~\ref{fig:witness16-intro} shows 
the three graphs from Theorem~\ref{thmB}. A second family, six graphs on $22$ vertices, is introduced in Section~\ref{sec:witnesses}.

\subsection*{Short faces are spectral, long faces are not.}
Our third result explains the difference at the level of the expected face count by splitting faces by length. For a uniformly random rotation system, linearity of expectation gives the expected face count as a sum over facial candidates. We split it at twice the girth,
\begin{equation}\label{eq:intro-SL}
\Exp[F]=S(G)+L(G),
\qquad
S(G)=\sum_{k=\ell}^{2\ell-1}2^{1-k}c_k(G),
\end{equation}
where $S(G)$ is the expected contribution of faces of length less than $2\ell$ and $L(G)$ that of the longer faces. Faces of length less than $2\ell$ are forced to be ordinary simple cycles. Examining the expected face count is not new~\cite{StahlRegions} and continues in recent work~\cite{CampionLothMohar,ChenGao}. Theorem~\ref{thmC} computes the short part exactly.

\begin{theoremletter}\label{thmC}
Let $G$ be a connected cubic graph of girth $\ell$. The short-face contribution is determined by the non-backtracking trace of $G$,
\[
S(G)=\sum_{k=\ell}^{2\ell-1}\frac{\tr(B^k)}{k\,2^{k}},
\]
and is therefore an adjacency-spectral invariant of $G$. However, the long-face contribution $L(G)=\Exp[F]-S(G)$ is not. The families of Theorem~\ref{thmB} are cospectral with equal $S(G)$ but unequal $L(G)$.
\end{theoremletter}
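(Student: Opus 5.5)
Three things have to be established: the formula $S(G)=\sum_k 2^{1-k}c_k(G)$ of \eqref{eq:intro-SL}; the identity $\tr(B^k)=2k\,c_k(G)$ for $\ell\le k\le 2\ell-1$; and a numerical comparison of $L$ on the families of Theorem~\ref{thmB}.

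\textbf{The face expansion.} For a uniformly random rotation system $\rho$, write $F(\rho)$ as a sum of indicators over \emph{facial candidates}. A facial candidate is a cyclic closed walk $W$ that could be traced as a face boundary; it is taken up to cyclic shift, and traversing it in the opposite direction gives a different candidate. At each vertex where $W$ turns, the local rotation must send the incoming dart to the outgoing dart.

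Now take a closed non-backtracking walk of length $k<2\ell$ in $G$. Such a walk cannot revisit a vertex except by running around a cycle. Any closed walk that is not a simple cycle either traverses a cycle twice (so $k\ge 2\ell$) or contains two distinct cycles glued together. In the latter case each cycle has length at least $\ell$, again forcing $k\ge 2\ell$. Face boundaries are closed walks whose backtracking can only occur at vertices of degree $1$, and a cubic graph has none. Hence every face of length $k<2\ell$ traces a directed simple cycle.

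Each directed $k$-cycle $C$ passes through $k$ distinct vertices. At each of them, exactly one of the two cyclic orders sends the incoming edge to the outgoing edge. So $C$ is a face with probability $2^{-k}$, and these events are independent because the vertices are distinct. Each undirected $k$-cycle gives two directed ones, so faces of length $k$ contribute $2\cdot 2^{-k}c_k=2^{1-k}c_k$ to $\Exp[F]$. Summing over $\ell\le k\le 2\ell-1$ gives \eqref{eq:intro-SL}.

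\textbf{The non-backtracking trace.} Let $B$ be the Hashimoto matrix on the $2|E|=3n$ darts. Then $\tr(B^k)$ counts closed non-backtracking walks of length $k$ that are cyclically non-backtracking and carry a marked starting dart. By the previous paragraph, for $k<2\ell$ every such walk is a directed simple cycle. Each directed cycle has $k$ choices of starting dart, so $\tr(B^k)=2k\,c_k$. Substituting, $2^{1-k}c_k=\tr(B^k)/(k2^k)$, which is the stated formula.

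\textbf{Spectral invariance.} I use the Ihara--Bass identity for a $(q+1)$-regular graph, here with $q=2$:
\[
\det(I-uB)=(1-u^2)^{|E|-n}\det\bigl(I-uA+2u^2I\bigr).
\]
The right-hand side is determined by $n$ and the adjacency spectrum. Hence so is the spectrum of $B$, and with it every $\tr(B^k)$. The girth is spectral as well: it is the least $k$ with $\tr(B^k)\neq0$. So the summation range, and therefore $S(G)$, is an adjacency-spectral invariant.

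\textbf{The long part is not spectral.} The step I expect to be the main obstacle is showing that $L$ is not spectral, because $L$ has no closed form and must be certified from the computed genus polynomials. For the witnesses of Theorem~\ref{thmB}, I read off $\Exp[F]$ from $\Gamma_G$ by inverting \eqref{eq:intro-genus-from-faces}:
\[
\Exp[F]=2+\tfrac n2-2\,\Gamma_G'(1)/2^n.
\]
For example, using the coefficients in Figure~\ref{fig:witness16-intro}, I compute $\Gamma'(1)$ for $G_1,G_2,G_3$ and observe that these values are distinct. Since the three graphs are cospectral, they have equal $S$. Therefore $L=\Exp[F]-S$ differs among them, which shows $L$ is not a spectral invariant.
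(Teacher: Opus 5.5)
Your proposal is correct and follows essentially the same route as the paper. Faces and closed non-backtracking walks of length $<2\ell$ are directed simple cycles, which gives $\Exp[F_{=k}]=2^{1-k}c_k$ and $\tr(B^k)=2k\,c_k$. The Ihara--Bass identity, together with reading the girth off as the first nonzero trace, then makes $S(G)$ spectral, and the distinct average genera of the cospectral witnesses give distinct $L(G)$. Your informal ``two cycles glued together'' step should be made precise as the paper does: split the walk at a repeated vertex into two closed non-backtracking pieces, each of which contains a cycle. The derivatives you leave uncomputed are $\Gamma'(1)=231604,\ 231724,\ 231940$ for $G_1,G_2,G_3$; these are indeed distinct and match the $\Exp[g]$ column of Table~\ref{tab:n16-smallest-witness}.
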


Theorem~\ref{thmC} also allows us to examine what the expected face count contains outside of the spectrum, which is exactly the long-face term $L(G)$. To work with $L(G)$ further, we define \emph{locally compatible circuits} (Definition~\ref{def:compatibility}) as closed non-backtracking circuits whose \emph{successor constraints} can be simultaneously realized by some rotation system. This allows us to write
\begin{equation}\label{eq:intro-compatible}
\Exp[F]=\sum_{C}\chi(C)\,2^{-r(C)},
\end{equation}
where $\chi(C)$ records compatibility and $r(C)$ the number of constrained vertices. For the six graph family of Section~\ref{sec:witnesses}, a computation shows that the expected face-length profiles agree at every length up to $11$ and first begin to differ at length $12$. The simple-cycle contribution at length 12 is still common to all six graphs, so the first distinction is carried entirely by \emph{non-simple} facial walks (Theorem~\ref{thm:n22-first-compatible-difference}). 

\subsection*{An infinite family.}

We also exhibit an infinite family of adjacency-cospectral pairs with distinct genus polynomials.

\begin{theoremletter}\label{thmD}
There exists an infinite family of pairs $(G_t,H_t)$, $t\ge1$, of connected simple cubic graphs on $14+2t$ vertices such that, for every $t$, the graphs $G_t$ and $H_t$ are adjacency-cospectral, and their minimum genera differ:
\[
\gamma(G_t)=2,\qquad \gamma(H_t)=1.
\]
In particular $\Gamma_{G_t}(x)\ne\Gamma_{H_t}(x)$ for every $t$, the polynomials differing already in their least exponent.
\end{theoremletter}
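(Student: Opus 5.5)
We construct the pairs by attaching a common ``tail'' to a fixed base pair. Take a fixed cospectral base pair $(G_0,H_0)$ of cubic graphs on $14$ vertices, or on $14$ vertices after removing one edge, found by computer search. Choose it so that $\gamma(G_0)=2$ and $\gamma(H_0)=1$. Both graphs should contain a distinguished edge $e_0=u_0v_0$ whose removal leaves cospectral ``rooted'' graphs.

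For the tail, we delete $e_0$ and insert a ladder-like gadget $L_t$ on $2t$ vertices: a path of $t$ rungs, i.e.\ a chain of $4$-cycles. Its two end vertices are joined to $u_0$ and $v_0$, so $G_t$ and $H_t$ are cubic on $14+2t$ vertices and connected. An equivalent option is to subdivide $e_0$ $2t$ times and pair up the new vertices. Only the regularity and the gluing locus matter.

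\textbf{Cospectrality.} Use the Schwenk/Godsil--McKay type gluing principle. If the rooted graphs $(G_0-e_0;u_0,v_0)$ and $(H_0-e_0;u_0,v_0)$ have equal characteristic polynomials for every vertex-deleted and pair-deleted subgraph at the roots, then gluing any common gadget gives cospectral results. Concretely, we need
\[
\phi(X),\quad \phi(X-u_0),\quad \phi(X-v_0),\quad \phi(X-u_0-v_0)
\]
to agree, along with the walk-generating function between $u_0$ and $v_0$. Then the expansion of $\phi$ across the two cut edges shows $\phi(G_t)=\phi(H_t)$ for all $t$. This reduces to a finite check on the base pair. The cleanest route is to pick the base pair as a Godsil--McKay switching pair where the switching set avoids $u_0,v_0$ and the gadget attaches only to vertices outside the switching set. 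Then GM switching applies verbatim to every $G_t$, and cospectrality for all $t$ is immediate. This is the route I would commit to.

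\textbf{Genus.} For the upper bound $\gamma(H_t)\le 1$, exhibit a toroidal embedding of $H_0$ in which $e_0$ lies on a face boundary. A ladder is planar and can be inserted into a face containing $e_0$, splitting that face, so the torus embedding extends to every $t$. The graph is not planar because it contains a Kuratowski subgraph inside the $H_0$ part; that subgraph survives because $L_t$ only replaces $e_0$ by a path, i.e.\ a subdivision. Hence $\gamma(H_t)=1$.

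For $G_t$, the bound $\gamma(G_t)\le 2$ follows the same way. The lower bound $\gamma(G_t)\ge 2$ is the main obstacle. We use additivity of genus over $2$-sums and edge-amalgamations (Decker--Glover--Huneke / Stahl). $G_t$ is obtained from $G_0$ and a planar piece by a $2$-edge cut, and the genus of a graph with a $2$-edge-cut relates to the genera of the pieces with the cut edges joined. This gives $\gamma(G_t)\ge\gamma(G_0)$, since $G_0$ is a minor of $G_t$ obtained by contracting the ladder to the edge $e_0$, and genus is minor-monotone. Thus $\gamma(G_t)=2$ reduces to $\gamma(G_0)=2$, certified on the base by exhaustive search over its $2^{14}$ rotation systems. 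The minor argument is what makes the lower bound uniform in $t$. The remaining difficulty is finding a base pair satisfying all constraints simultaneously, which I would do by searching the $14$-vertex census for GM-switching pairs with differing minimum genus.
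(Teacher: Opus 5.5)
Your skeleton is the paper's: a Godsil--McKay switching set kept away from a growing planar gadget, so cospectrality holds for every $t$; embedding extension for the upper bounds; and minor-monotonicity of genus for the lower bounds. The construction as written, however, does not work.

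\textbf{The gadget is not cubic or not simple.} Deleting the single edge $e_0$ frees only two degree slots. A ladder with $t$ rungs has four deficient corner vertices; for $t=1$ it is a single edge with two vertices of degree $1$. So attaching ``its two end vertices'' to $u_0,v_0$ does not give a cubic graph. Your fallback, subdividing $e_0$ $2t$ times and pairing the new vertices, is cubic but not simple at $t=1$, because the two new vertices are already adjacent. In fact no simple cubic graph can be enlarged by exactly two vertices by replacing one edge: the two new vertices would need two edges between them. So the $16$-vertex member cannot be reached this way.

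The paper instead deletes two edges, $\{0,8\}$ and $\{6,11\}$, and uses them as the rails $0a_1\cdots a_t8$ and $6c_1\cdots c_t11$. With two rails the upper bound is no longer free. Adding the rung $a_{t+1}c_{t+1}$ as a chord without raising the genus requires the two subdivided rail edges to lie on a common face. This is why Lemma~\ref{lem:if-base} certifies embeddings of $G_1$ and $H_1$ with a face through $8,a_1,c_1,11$ (resp.\ $11,c_1,a_1,8$), and Lemma~\ref{lem:if-upper} carries that face along.

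\textbf{The base pair is neither exhibited nor certified.} This is the more serious gap. Everything rests on a $14$-vertex cospectral pair with $\gamma(G_0)=2$, $\gamma(H_0)=1$ and a switching set avoiding $e_0$, and you defer it to a search you have not run. For an existence statement of this kind, that base pair and the certificate of its genera are the whole content.

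Your design also asks for more than is needed. It makes the gadget inert, so the genus gap must already be present before any gadget is attached. In the paper the gap is created by the gadget. The $14$-vertex switching pair $(A,\operatorname{switch}_C(A))$ does not have genera $(2,1)$. Indeed, $A$ is the cube with one vertex replaced by the triangle $\{0,6,8\}$ and the antipodal vertex by the $K_{2,3}$ on $\{4,5\}\times\{10,12,13\}$, so $\gamma(A)=1$. The first rung $a_1c_1$ turns the triangle into a second $K_{2,3}$, on $\{6,a_1\}\times\{0,8,c_1\}$.

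The genera at $t=1$ are then certified by exhaustive enumeration of the $2^{16}$ rotation systems of $G_1$ and $H_1$ (Lemma~\ref{lem:if-base}). Your minor argument, applied from $G_1$ and $H_1$ rather than from a hypothetical $14$-vertex base, then gives the uniform lower bounds.
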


We prove Theorem~\ref{thmD} in Section~\ref{sec:infinite-family}.

\subsection*{Further contributions.}
We complement the structural results with computations and applications of our short-face identity:
\begin{enumerate}[label=\textup{(\arabic*)}, leftmargin=2.2em]

\item a complete census of orientable genus polynomials for all connected cubic graphs through $22$ vertices, comprising $7{,}875{,}918$ graphs (Table~\ref{tab:census}), expanding the $n=16$ census of~\cite{CarrDhaliwalMohar}.

\item a deterministic lower bound on $\gamma(G)$, computed only by short-cycle counts and tested in Section~\ref{sec:bounds}.
\end{enumerate}

\subsection*{Related work.}
Carr and Mohar~\cite{CarrMohar} recently studied the genus distribution of a
\emph{fixed} cubic graph. They parametrized its $2^n$ embeddings and analyzed the genus through the \emph{unstable dual}, which they decompose across small vertex cuts. They show that the edges of a $2$-cut contribute only factors of two to a fixed graph's distribution, which they also relate to Whitney switching~\cite[\S5.2]{CarrMohar}.

\section{Preliminaries}
\label{sec:preliminaries}

All graphs are finite and undirected, and unless stated otherwise, simple. We follow the standard conventions of topological graph theory. For embeddings, rotation systems, and the genus, we refer the reader to Mohar--Thomassen~\cite{MoharThomassen}, and for matroid terminology to Oxley~\cite{Oxley}. Throughout, $G$ is a connected cubic graph unless specified otherwise, with
\[
n=|V(G)|,\qquad m=|E(G)|=\tfrac{3n}{2}.
\]
A graph is \emph{$2$-connected} if it is connected, has at least three vertices, and has no cut vertex, and \emph{$3$-connected} if it additionally has at least four vertices and no $2$-element vertex cut.

\subsection{Cycles and girth}

\begin{definition}
The \emph{girth} $\ell=\ell(G)$ is the length of a shortest cycle. For $k\ge 1$, $c_k(G)$ denotes the number of cycles of length $k$ in $G$, where a \emph{cycle} is an unoriented closed walk through $k$ distinct vertices, and cyclic shifts of the same closed walk are not distinguished as different cycles.
\end{definition}

\subsection{Darts, rotation systems, and faces}

\begin{definition}
A \emph{dart} of $G$ is an ordered pair $(u,v)$ with $uv\in E(G)$. We call $u$ its \emph{tail} and $v$ its \emph{head}, so each edge $uv$ gives two oppositely directed darts $(u,v)$ and $(v,u)$. Let $D(G)=\{(u,v):uv\in E(G)\}$ be the set of darts and $\alpha\colon D(G)\to D(G)$, $\alpha(u,v)=(v,u)$, the permutation exchanging the two darts of each edge.
\end{definition}

\begin{definition}
An \emph{orientable rotation system} $\rho$ assigns to each vertex $v$ a cyclic ordering $\sigma_v$ of the out-darts at $v$ (the darts with tail $v$). These assemble into the permutation $\sigma_\rho=\prod_v\sigma_v$ of $D(G)$, and the \emph{face permutation} of $\rho$ is
\[
  \phi_\rho=\sigma_\rho\alpha,\qquad \phi_\rho(u,v)=(v,\sigma_v(u)),
\]
where $\sigma_v(u)$ denotes the neighbor of $v$ following $u$ in the rotation at $v$. We write $F(\rho)$ for the number of cycles of $\phi_\rho$, and $\RS(G)$ for the set of rotation systems of $G$.
\end{definition}

To read a face off $\rho$, start at a dart $(u,v)$, reverse it with $\alpha$, and advance to the next out-dart in the rotation at the new tail,
\[
  (u,v)\ \xrightarrow{\ \alpha\ }\ (v,u)\ \xrightarrow{\ \sigma_v\ }\ (v,\sigma_v(u)).
\]
Because $\phi_\rho$ permutes the finite set $D(G)$, iterating from any dart returns to it and closes a walk in $G$. Such a closed walk is a facial boundary walk of the cellular embedding that $\rho$ determines, and the cycles of $\phi_\rho$ are exactly its faces. For a cubic graph $G$, each vertex has three out-darts and hence exactly two rotations, so $|\RS(G)|=2^n$. By the Heffter--Edmonds--Ringel theorem, $\RS(G)$ is in bijection with the $2$-cell orientable embeddings of $G$, up to orientation-preserving homeomorphism~\cite{Heffter,Edmonds,Ringel,GrossTucker,MoharThomassen}.

\begin{lemma}\label{lem:genus-from-faces}
For a connected cubic graph $G$ on $n$ vertices and any $\rho\in\RS(G)$,
\[
g(\rho)=1+\frac n4-\frac12\,F(\rho).
\]
\end{lemma}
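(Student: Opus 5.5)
The plan is to combine Euler's formula for cellular embeddings with the Heffter--Edmonds--Ringel correspondence recalled just above. Since $\rho$ determines a $2$-cell embedding of the connected graph $G$ in a closed orientable surface $S_{g(\rho)}$, Euler's formula gives
\[
n-m+F(\rho)=2-2g(\rho),
\]
where the faces of the embedding are exactly the cycles of $\phi_\rho$, so their number is $F(\rho)$. This is the only topological input; everything else is arithmetic.

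Concretely, I would proceed in three steps. First, I would justify that the number of faces of the embedding equals the number of cycles of $\phi_\rho=\sigma_\rho\alpha$. This is the standard face-tracing fact: each face boundary traversed with the surface on its left is read off by iterating $\phi_\rho$, and each dart lies on exactly one face. I will cite \cite{MoharThomassen,GrossTucker} for it rather than reprove it. Second, I would use cubicity and the handshake lemma, $2m=3n$, to get $m=3n/2$. Third, I would substitute into Euler's formula to obtain $n-\tfrac{3n}{2}+F(\rho)=2-2g(\rho)$, hence
\[
2g(\rho)=2+\tfrac n2-F(\rho),
\qquad\text{that is,}\qquad
g(\rho)=1+\tfrac n4-\tfrac12F(\rho).
\]

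The only point requiring care is the first step. One must make sure the embedding is genuinely cellular, so that Euler's formula with $V-E+F=2-2g$ applies. This follows from connectivity of $G$ together with the Heffter--Edmonds--Ringel theorem, which constructs the surface by gluing one disk per cycle of $\phi_\rho$. No further obstacle is expected.
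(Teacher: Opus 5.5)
Your proposal is correct and follows the same route as the paper. The paper simply notes that the identity is well known and follows from Euler's formula $n-m+F(\rho)=2-2g(\rho)$ for the cellular embedding determined by $\rho$, together with $m=\tfrac{3n}{2}$; your three steps (faces are the cycles of $\phi_\rho$, the handshake count, then the substitution) spell this out.
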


This is well-known and follows from Euler's formula.

\subsection{The genus polynomial and the face-count polynomial}

\begin{definition}\label{def:genus-poly}
The \emph{orientable genus polynomial} of $G$ is $\Gamma_G(x)=\sum_g N_g(G)x^g$, where $N_g(G)=\#\{\rho\in\RS(G):g(\rho)=g\}$. Its least and greatest exponents are the \emph{minimum genus} $\gamma(G)=\min_{\rho\in\RS(G)}g(\rho)$ and the \emph{maximum genus} $\gmax(G)=\max_{\rho\in\RS(G)}g(\rho)$. The \emph{face-count polynomial} is
\[
\Phi_G(y)=\sum_{\rho\in\RS(G)}y^{F(\rho)}.
\]
\end{definition}

By Lemma~\ref{lem:genus-from-faces}, for a connected cubic graph on $n$ vertices the genus and face count of any embedding satisfy $g=1+\tfrac{n}{4}-\tfrac{F}{2}$, equivalently $F=2+\tfrac{n}{2}-2g$. Hence the coefficient of $x^{g}$ in $\Gamma_G$ equals the coefficient of $y^{\,2+n/2-2g}$ in $\Phi_G$, and the two polynomials are interchangeable. 

The \emph{length} of a face is the number of darts on its boundary walk, or equivalently, the length of the corresponding cycle of $\phi_\rho$. We write $F_{=k}(\rho)$ for the number of faces of $\rho$ of length $k$, and $F_{<t}(\rho)$, $F_{\ge t}(\rho)$ for the number of faces of $\rho$ of length $<t$ and $\ge t$, respectively, so $F(\rho)=\sum_{k}F_{=k}(\rho)$. Probabilities and expectations always refer to $\rho$ chosen uniformly at random from $\RS(G)$.

\subsection{The non-backtracking matrix}

\begin{definition}\label{def:nonbacktracking}
The \emph{non-backtracking} matrix $B$ of $G$~\cite{Hashimoto} is the $2m\times 2m$ matrix indexed by darts, with $B_{d,d'}=1$ if the head of $d$ is the tail of $d'$ and $d'\neq\alpha(d)$, and $B_{d,d'}=0$ otherwise. The $\tr(B^k)$ counts the number of closed non-backtracking walks of length $k$.
\end{definition}

For $d$-regular graphs, the eigenvalues of $B$ are determined by those of the adjacency matrix through the Bass--Ihara determinant~\cite{Ihara,Bass,KotaniSunada}, which we use in Section~\ref{sec:short-spectral} and give the precise statement there.

\subsection{The cycle matroid and Whitney twists}

\begin{definition}\label{def:cycle-matroid}
The \emph{cycle matroid} $M(G)$ has ground set $E(G)$ and circuits the edge sets of the cycles of $G$~\cite{Oxley}. A \emph{$2$-isomorphism} $G\to H$ is a bijection $E(G)\to E(H)$ carrying cycles to cycles, equivalently an isomorphism $M(G)\to M(H)$.
\end{definition}

We defer the definitions of Whitney separations, Whitney twists, and balanced twists to Section~\ref{sec:twists}.

\part*{Part I. A matroid upper bound}
\addcontentsline{toc}{part}{Part I. A matroid upper bound}

We begin with the upper bound: the matroid determines the polynomial. The cycle matroid records only which edge sets are cycles, nothing about how the edges meet at vertices or the graph's rotation systems. However, we show that for $2$-connected cubic graphs, that alone is enough to determine the entire genus polynomial.

\section{Whitney twists and balanced twists}
\label{sec:twists}

\begin{definition}\label{def:whitney-sep}
Let $G$ be $2$-connected. A \emph{Whitney separation} is an edge partition $E(G)=E(A)\du E(B)$ into edge-disjoint subgraphs with $V(A)\cap V(B)=\{u,v\}$, each of $A,B$ containing a $u$--$v$ path. Treating the cut vertices as separate copies $u_A,v_A$ and $u_B,v_B$, the graph $G$ is the \emph{straight gluing} $u_A{=}u_B,\,v_A{=}v_B$, and the \emph{Whitney twist} $\Gtau$ is the \emph{crossed gluing} $u_A{=}v_B,\,v_A{=}u_B$ (Figure~\ref{fig:whitney-twist}).
\end{definition}

\begin{proposition}[Whitney's $2$-isomorphism theorem]\label{prop:twist-matroid}
A Whitney twist preserves the cycle matroid: $M(\Gtau)\cong M(G)$ under the identity on edges. Conversely, any two $2$-isomorphic $2$-connected graphs are related by a sequence of Whitney twists~\cite{Whitney,Oxley}.
\end{proposition}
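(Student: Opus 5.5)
We prove the two directions of Proposition~\ref{prop:twist-matroid} separately. The forward direction is a direct cycle-by-cycle check. The converse is Whitney's theorem, which we reduce to the $3$-connected case and then to a tree-decomposition argument.

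\textbf{Forward direction.} We show that every cycle $C$ of $G$ is, as an edge set, a cycle of $\Gtau$, and conversely. Since $V(A)\cap V(B)=\{u,v\}$ and the two sides are edge-disjoint, there are two cases.
\begin{enumerate}[label=\textup{(\roman*)}]
\item $C$ lies entirely in $A$ or entirely in $B$. The gluing does not change the incidences inside either side, except for renaming the shared vertices. So $E(C)$ still induces a connected $2$-regular subgraph of $\Gtau$.
\item $C$ meets both sides. Then $C$ passes through both $u$ and $v$. It splits into a $u$--$v$ path $P_A\subseteq A$ and a $u$--$v$ path $P_B\subseteq B$. In $\Gtau$, $P_A$ runs from $u_A$ to $v_A$, while $P_B$ runs from $u_B=v_A$ to $v_B=u_A$. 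Hence $P_A\cup P_B$ closes up into a cycle, with the same edge set.
\end{enumerate}
The twist is an involution: twisting $\Gtau$ along the same separation returns $G$. So the same argument carries cycles of $\Gtau$ to cycles of $G$. Therefore the identity on $E$ is an isomorphism $M(G)\to M(\Gtau)$.

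\textbf{Converse.} This is Whitney's $2$-isomorphism theorem, and we would follow the standard argument as presented in Oxley. We argue by induction on $|E|$, in two steps.
\begin{enumerate}[label=\textup{(\roman*)}]
\item \emph{The $3$-connected case.} Here we show that a $2$-isomorphism is induced by a graph isomorphism. The key point is that vertex stars can be recovered from the matroid. For a $3$-connected graph, the edge sets of vertex stars are exactly the cocircuits $D$ such that $M\setminus D$ is connected, i.e., the non-separating bonds. Such cocircuits are preserved by a matroid isomorphism. Two edges share an endpoint exactly when they lie in a common such cocircuit, so the vertex incidence structure is determined, and the graphs are isomorphic compatibly with the edge bijection.
\item \emph{The general $2$-connected case.} We use the decomposition of a $2$-connected graph along its $2$-separations into $3$-connected pieces, cycles and bonds (the Tutte/$2$-sum decomposition). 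The matroid $M(G)$ is the corresponding $2$-sum of the piece matroids, and this decomposition is canonical at the matroid level. Hence $G$ and $H$ have matching pieces, which are isomorphic by the $3$-connected case. Cycles and bonds are determined up to reordering, and reordering can be achieved by twists.
\end{enumerate}
It remains to show that the two graphs differ only in how the pieces are glued at each $2$-separation. At each virtual edge there are exactly two ways to identify the endpoints, and switching between them is precisely a Whitney twist. Performing twists at the separations where the gluing orientations disagree transforms $G$ into a graph isomorphic to $H$, compatibly with the edge bijection.

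\textbf{Main obstacle.} The hard part is the canonical nature of the decomposition: different $2$-separations can cross one another. We would handle this by working with the totally decomposed tree of Cunningham--Edmonds, whose uniqueness makes the induction clean. Since the result is classical, the paper may simply cite~\cite{Whitney,Oxley} for the converse, and we would do the same, giving only the forward verification in full.
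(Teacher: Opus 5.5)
Your proposal is correct and consistent with the paper, which gives no argument for this proposition and cites Whitney and Oxley for both directions (and, in Lemma~\ref{lem:whitney-2iso}, Truemper for the converse). Your direct cycle check of the forward direction is valid, since a cycle is either inside one side or is a $u$--$v$ path in each side that closes up under the crossed gluing. Deferring the converse to the classical theorem matches the paper, and your sketch of it via the canonical decomposition and the two gluings at each virtual edge is the same assembly-data picture the paper develops later in Proposition~\ref{prop:realizations}.
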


\begin{definition}\label{def:balanced}
With $a_u=\deg_A(u_A)$, $a_v=\deg_A(v_A)$, $b_u=\deg_B(u_B)$, $b_v=\deg_B(v_B)$, the twist is \emph{balanced} (degree-preserving) if $\Gtau$ has the same degrees at $u$ and $v$ as $G$. The twist changes $\deg(u)=a_u+b_u$ and $\deg(v)=a_v+b_v$ to $\deg(u)=a_u+b_v$ and $\deg(v)=a_v+b_u$. Thus, it is balanced if and only if $b_u=b_v$, which for a cubic graph is equivalent to $a_u=a_v$. In this case, we also call the separation and cut balanced.
\end{definition}

\begin{lemma}\label{lem:dichotomy}
If $G$ is cubic and $(A,B)$ is a Whitney separation, then $a_u+b_u=a_v+b_v=3$ and all four of these degrees are at least $1$. The twist is balanced if and only if, after possibly exchanging $A$ and $B$, the degree pairs are $(a_u,a_v)=(1,1)$ and $(b_u,b_v)=(2,2)$.
\end{lemma}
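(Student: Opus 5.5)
The plan is a direct degree count, using only the definition of a Whitney separation and the cubic hypothesis. Since $E(G)=E(A)\du E(B)$ is an edge partition and $u$ is a vertex of both $A$ and $B$, every edge of $G$ at $u$ lies in exactly one of $A$ and $B$. Hence $\deg_G(u)=a_u+b_u$, and likewise $\deg_G(v)=a_v+b_v$. Because $G$ is cubic, both sums equal $3$.

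For the lower bounds, each of $A$ and $B$ contains a $u$--$v$ path. We have $u\neq v$ because $V(A)\cap V(B)=\{u,v\}$ consists of the two cut vertices. So each path has at least one edge, and it uses an edge at $u$ and an edge at $v$. This gives $a_u,a_v,b_u,b_v\ge1$.

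For the balanced characterization, we use Definition~\ref{def:balanced}: the twist is balanced if and only if $b_u=b_v$, which is equivalent to $a_u=a_v$ since $a_u=3-b_u$ and $a_v=3-b_v$. With all four degrees in $\{1,2\}$ (each is at least $1$ and its partner at the same vertex is at least $1$), the condition $a_u=a_v$ leaves exactly two options: $(a_u,a_v)=(1,1)$ with $(b_u,b_v)=(2,2)$, or $(a_u,a_v)=(2,2)$ with $(b_u,b_v)=(1,1)$. The second reduces to the first by exchanging the roles of $A$ and $B$, which preserves the notion of a Whitney separation. Conversely, either configuration visibly has $b_u=b_v$, so the twist is balanced.

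There is no real obstacle here. The only point needing care is justifying $u\neq v$ and that the $u$--$v$ paths are nontrivial. This is implicit in the definition, since the separation has two distinct attachment vertices.
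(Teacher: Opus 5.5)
Your proposal is correct and follows essentially the same argument as the paper. Both split the edges at each cut vertex between the sides to get sums of $3$, use the $u$--$v$ paths for the lower bounds, and reduce balancedness to $a_u=a_v$ with the $(2,2)\mid(1,1)$ case handled by swapping $A$ and $B$. Your explicit check that $u\neq v$, so each path has at least one edge, is a small detail the paper leaves implicit.
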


\begin{proof}
Each cut vertex has degree $3$ in $G$, and its incident edges are partitioned between the two sides, so $a_u+b_u=a_v+b_v=3$. Both sides contain a $u$--$v$ path, hence each of $a_u,a_v,b_u,b_v$ is at least $1$. Thus, each pair $\{a_u,b_u\}$ and $\{a_v,b_v\}$ is $\{1,2\}$ in some order. By Definition~\ref{def:balanced}, the twist is balanced precisely when $a_u=a_v$. If $a_u=a_v=1$ then $b_u=b_v=2$, the pattern $(1,1)\mid(2,2)$. If $a_u=a_v=2$, then $b_u=b_v=1$, which is the same pattern after exchanging the labels of $A$ and $B$. The only remaining case is $a_u\neq a_v$, giving $\{a_u,a_v\}=\{1,2\}$ and $\{b_u,b_v\}=\{1,2\}$, the unbalanced split $(1,2)\mid(2,1)$.
\end{proof}

\begin{lemma}[A cubic twist stays cubic iff balanced]\label{lem:cubic-iff-balanced}
Let $G$ be cubic and $\Gtau$ a Whitney twist of $G$. Then $\Gtau$ is cubic if and only if the twist is balanced. In particular, any Whitney twist sending one cubic graph to another is necessarily balanced, whereas the unbalanced split $(1,2)\mid(2,1)$ in Lemma~\ref{lem:dichotomy} would produce a vertex of degree $4$ and a vertex of degree $2$.
\end{lemma}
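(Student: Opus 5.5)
The plan is a direct degree count using the formula in Definition~\ref{def:balanced} together with the case analysis of Lemma~\ref{lem:dichotomy}. Only the two cut vertices can change degree: a vertex of $A$ or $B$ other than $u,v$ keeps all its incident edges in both gluings, so its degree in $\Gtau$ equals its degree in $G$, namely $3$. The statement therefore reduces to the degrees of the two glued vertices of $\Gtau$.

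In $\Gtau$ the vertex $u_A{=}v_B$ has degree $a_u+b_v$, and the vertex $v_A{=}u_B$ has degree $a_v+b_u$. Since $G$ is cubic, Lemma~\ref{lem:dichotomy} gives $a_u+b_u=a_v+b_v=3$ and $\{a_u,b_u\}=\{a_v,b_v\}=\{1,2\}$.

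If the twist is balanced, Lemma~\ref{lem:dichotomy} puts us, after possibly swapping $A$ and $B$, in the pattern $(a_u,a_v)=(1,1)$, $(b_u,b_v)=(2,2)$. Then both new degrees are $1+2=3$, so $\Gtau$ is cubic.

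Conversely, if the twist is unbalanced, we are in the pattern $(1,2)\mid(2,1)$. Up to relabeling, $a_u=1$, $a_v=2$, $b_u=2$, $b_v=1$, which gives $a_u+b_v=2$ and $a_v+b_u=4$. So $\Gtau$ has a vertex of degree $2$ and a vertex of degree $4$ and is not cubic. The other labeling, $a_u=2$, $a_v=1$, $b_u=1$, $b_v=2$, gives $4$ and $2$ symmetrically. This proves the equivalence and the explicit degree-$4$/degree-$2$ claim.

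The "in particular" clause follows immediately: if $G$ and $\Gtau$ are both cubic, the twist cannot be unbalanced, so it is balanced.

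The only point needing care is the simplicity of $\Gtau$. The lemma concerns only degrees, so I would note that degree is counted with multiplicity of incident edges, which makes the count valid even if the crossed gluing creates a parallel edge. With that convention there is no real obstacle; the whole argument is bookkeeping on four numbers.
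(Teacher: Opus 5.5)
Your argument is correct and is the same degree count that the paper's proof leaves to the bottom panel of Figure~\ref{fig:whitney-twist}: only the two glued vertices change, now having degrees $a_u+b_v$ and $a_v+b_u$, which are $3,3$ in the balanced pattern $(1,1)\mid(2,2)$ and $2,4$ in the unbalanced pattern $(1,2)\mid(2,1)$. Your note that degrees are counted with multiplicity is a harmless clarification, since the paper handles simplicity of $G^{\tau}$ separately in Lemma~\ref{lem:chain-maintenance}.
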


\begin{proof}
See Figure~\ref{fig:whitney-twist}.
\end{proof}

\begin{figure}[H]
\begin{center}
\begin{tikzpicture}[line cap=round, line join=round, >=Latex,
  ae/.style={aside,line width=1.1pt},
  be/.style={bside,line width=1.1pt},
  iv/.style={circle,draw=black!55,fill=white,inner sep=1.4pt,line width=0.6pt},
  cut/.style={circle,fill=black,inner sep=2.0pt},
  bad/.style={circle,fill=red!70!black,inner sep=2.0pt},
  clab/.style={font=\footnotesize},
  ttl/.style={font=\small},
  sep/.style={gray!40,dashed,line width=0.6pt},
  twist/.style={-{Stealth[length=2.6mm]},line width=1.1pt,royalpurple}]

\begin{scope}[shift={(0,0)}]
  \node[ttl] at (0.05,1.84) {$G$ (straight)};
  \coordinate (u)  at (0,1.25);     \coordinate (v)  at (0,-1.25);
  \coordinate (a1) at (-0.95,0.7);  \coordinate (a2) at (-1.7,0);
  \coordinate (a3) at (-0.95,-0.7); \coordinate (a4) at (-0.42,0);
  \coordinate (b1) at (1.85,0);     \coordinate (b2) at (0.85,0.42);
  \coordinate (b3) at (0.85,-0.42); \coordinate (b4) at (1.35,0);
  \draw[ae] (u)--(a1) (a1)--(a2) (a2)--(a3) (a3)--(v) (a4)--(a1) (a3)--(a4) (a2)--(a4);
  \draw[be] (b2)--(b3) (b4)--(b1) (b4)--(b2) (b4)--(b3);
  \draw[be] (u)--(b1) (u)--(b2) (v)--(b1) (v)--(b3);
  \node[clab,text=aside]  at (-1.62,0.78) {$A$};
  \node[clab,text=bside]  at (1.95,0.55)  {$B$};
  \foreach \p in {a1,a2,a3,a4,b1,b2,b3,b4} \node[iv] at (\p) {};
  \node[cut] at (u) {}; \node[cut] at (v) {};
  \node[clab,above=1pt] at (u) {$u$}; \node[clab,below=1pt] at (v) {$v$};
\end{scope}

\begin{scope}[shift={(5.45,0)}]
  \node[ttl] at (0.05,1.84) {$G^{\tau}$ (twisted)};
  \coordinate (u)  at (0,1.25);     \coordinate (v)  at (0,-1.25);
  \coordinate (a1) at (-0.95,0.7);  \coordinate (a2) at (-1.7,0);
  \coordinate (a3) at (-0.95,-0.7); \coordinate (a4) at (-0.42,0);
  \coordinate (b1) at (1.85,0);     \coordinate (b2) at (0.85,0.42);
  \coordinate (b3) at (0.85,-0.42); \coordinate (b4) at (1.35,0);
  \draw[ae] (u)--(a1) (a1)--(a2) (a2)--(a3) (a3)--(v) (a4)--(a1) (a3)--(a4) (a2)--(a4);
  \draw[be] (b2)--(b3) (b4)--(b1) (b4)--(b2) (b4)--(b3);
  \draw[be] (u)--(b1) (u)--(b3) (v)--(b1) (v)--(b2);
  \node[clab,text=aside]  at (-1.62,0.78) {$A$};
  \node[clab,text=bside]  at (1.95,0.55)  {$B$};
  \foreach \p in {a1,a2,a3,a4,b1,b2,b3,b4} \node[iv] at (\p) {};
  \node[cut] at (u) {}; \node[cut] at (v) {};
  \node[clab,above=1pt] at (u) {$u$}; \node[clab,below=1pt] at (v) {$v$};
\end{scope}

\draw[twist] (2.35,0)--(3.4,0);
\node[font=\footnotesize,align=center,anchor=south,text=royalpurple] at (2.87,0.12) {twist:\\ reglue $B$};
\node[font=\scriptsize,align=center,anchor=north,text=gray] at (2.87,-0.15) {(swap $u,v$\\ on side $B$)};

\begin{scope}[shift={(-1.25,-3.0)}]
  \node[font=\footnotesize] at (1.35,1.05){\emph{balanced}\ \ $(1,1)\mid(2,2)$};
  \node[font=\footnotesize] at (6.85,1.05){\emph{unbalanced}\ \ $(1,2)\mid(2,1)$};

  \draw[sep] (0,0.58)--(0,-0.58);
  \node[cut] at (0,0.42){};  \node[clab] at (0,0.68) {$u$};
  \draw[ae] (0,0.42)--++(160:0.68);
  \draw[be] (0,0.42)--++(18:0.68);  \draw[be] (0,0.42)--++(-18:0.68);
  \node[cut] at (0,-0.42){}; \node[clab] at (0,-0.68) {$v$};
  \draw[ae] (0,-0.42)--++(200:0.68);
  \draw[be] (0,-0.42)--++(18:0.68); \draw[be] (0,-0.42)--++(-18:0.68);
  \draw[twist] (0.95,0)--(1.75,0);
  \node[font=\footnotesize,text=royalpurple,anchor=south] at (1.35,0.04){$\tau$};

  \draw[sep] (2.7,0.58)--(2.7,-0.58);
  \node[cut] at (2.7,0.42){};  \node[clab] at (2.7,0.68) {$3$};
  \draw[ae] (2.7,0.42)--++(160:0.68);
  \draw[be] (2.7,0.42)--++(18:0.68);  \draw[be] (2.7,0.42)--++(-18:0.68);
  \node[cut] at (2.7,-0.42){}; \node[clab] at (2.7,-0.68) {$3$};
  \draw[ae] (2.7,-0.42)--++(200:0.68);
  \draw[be] (2.7,-0.42)--++(18:0.68); \draw[be] (2.7,-0.42)--++(-18:0.68);

  \draw[gray!25,line width=0.6pt] (4.5,0.85)--(4.5,-0.7);

  \draw[sep] (5.5,0.58)--(5.5,-0.58);
  \node[cut] at (5.5,0.42){};  \node[clab] at (5.5,0.68) {$u$};
  \draw[ae] (5.5,0.42)--++(160:0.68);
  \draw[be] (5.5,0.42)--++(18:0.68);  \draw[be] (5.5,0.42)--++(-18:0.68);
  \node[cut] at (5.5,-0.42){}; \node[clab] at (5.5,-0.68) {$v$};
  \draw[ae] (5.5,-0.42)--++(162:0.68); \draw[ae] (5.5,-0.42)--++(198:0.68);
  \draw[be] (5.5,-0.42)--++(0:0.68);
  \draw[twist] (6.45,0)--(7.25,0);
  \node[font=\footnotesize,text=royalpurple,anchor=south] at (6.85,0.04){$\tau$};

  \draw[sep] (8.2,0.58)--(8.2,-0.58);
  \node[bad] at (8.2,0.42){};  \node[clab,text=red!70!black] at (8.2,0.68) {$2$};
  \draw[ae] (8.2,0.42)--++(160:0.68);
  \draw[be] (8.2,0.42)--++(0:0.68);
  \node[bad] at (8.2,-0.42){}; \node[clab,text=red!70!black] at (8.2,-0.68) {$4$};
  \draw[ae] (8.2,-0.42)--++(162:0.68); \draw[ae] (8.2,-0.42)--++(198:0.68);
  \draw[be] (8.2,-0.42)--++(-18:0.68); \draw[be] (8.2,-0.42)--++(18:0.68);
\end{scope}
\end{tikzpicture}
\end{center}
\caption{A Whitney twist (top) and a visual proof of Lemma~\ref{lem:cubic-iff-balanced} (bottom).}
\label{fig:whitney-twist}
\end{figure}

A balanced split is symmetric in the two cut vertices, so a cubic graph maps to a cubic graph, while any other split does not (Lemma~\ref{lem:cubic-iff-balanced}). This is why, in Section~\ref{sec:cubic-whitney}, we may restrict to balanced twists.

\section{Invariance under a single balanced twist}
\label{sec:single-twist}

Fix a $2$-connected cubic graph $G=A\cup B$ with balanced cut $\{u,v\}$. By Lemma~\ref{lem:dichotomy} we may label the sides so that each cut vertex has one $A$-edge and two $B$-edges. Label the darts of $G$ as either \textit{$A$-darts} or \textit{$B$-darts} based on their corresponding edge. Every vertex other than $u$ and $v$ has either all $A$-darts or all $B$-darts, so any face permutation $\phi$ can only map an $A$-dart to a $B$-dart at $u$ or $v$. Denote the neighbors of $u$ as $a_u,b_1,b_2$ and the neighbors of $v$ as $a_v,c_1,c_2$, where $a_u,a_v$ are the $A$-neighbors. Fix $\rho\in\RS(G)$. At each cut vertex $x$, exactly one transition from an $A$-dart to a $B$-dart occurs: a facial walk leaves the $A$-side at $x$ only by arriving along the $A$-dart $(a_x,x)$ and continuing on the $B$-dart $(x,\sigma_x(a_x))$. Likewise, exactly one transition from a $B$-dart to an $A$-dart occurs at $x$: a facial walk enters the $A$-side at $x$ only on the dart $(x,a_x)$, whose $\phi$-predecessor is the $B$-dart $(b,x)$ with $\sigma_x(b)=a_x$. Which of the two $B$-neighbors plays the role of $b$, and which $B$-dart $\sigma_x(a_x)$ is, depends on $\rho$.

Consider the facial walk $\mathcal{F}$ containing the dart $(u,a_u)$. Its $\phi$-predecessor is a $B$-dart, so as $\mathcal{F}$ closes there must be a transition back to a $B$-dart. The first such transition after $(u,a_u)$ happens at a cut vertex, which we call $\pi_A(u)$. Define $\pi_A(v)$ in the same way from the dart $(v,a_v)$. If $\pi_A(u)=\pi_A(v)=w$, both walks reach the unique $A$-to-$B$ transition at $w$ through strings of $A$-darts. Since $\phi$ is injective, tracing backwards from that transition yields a single string of $A$-darts, which begins at the unique dart whose predecessor is a $B$-dart and must be one of $(u,a_u)$ or $(v,a_v)$. Hence $\pi_A$ is a permutation of $\{u,v\}$. Define $\pi_B$ symmetrically: $\pi_B(u)$ is the cut vertex carrying the first $B$-to-$A$ transition on the facial walk after the $B$-dart $(u,\sigma_u(a_u))$ (that walk contains the $A$-dart $(a_u,u)$, so such a transition exists), and likewise $\pi_B(v)$. The same argument makes $\pi_B$ a permutation. Both permutations depend on $\rho$, and we write $\pi_A^\rho,\pi_B^\rho$. There are four cases:
\[
\begin{array}{c|c|c|c}
\text{case} & \pi_A & \pi_B & \pi_B\pi_A \\
\hline
1 & \operatorname{id} & \operatorname{id} & \operatorname{id} \\
2 & (uv) & (uv) & \operatorname{id} \\
3 & \operatorname{id} & (uv) & (uv) \\
4 & (uv) & \operatorname{id} & (uv)
\end{array}
\]
We call a face containing both $A$- and $B$-darts a \textit{crossing} face. Such a face alternates between maximal strings of $A$-darts and maximal strings of $B$-darts. Starting at $x\in\{u,v\}$, one first follows a string of $A$-darts to $\pi_A(x)$, and then a string of $B$-darts to $\pi_B(\pi_A(x))$. Hence the crossing faces are precisely the orbits of the permutation $\pi_B\pi_A$ on $\{u,v\}$. Thus, in cases $1$ and $2$, where $\pi_B\pi_A=\operatorname{id}$, there are two crossing faces. One corresponds to the orbit $\{u\}$ and the other to the orbit $\{v\}$. In cases $3$ and $4$, where $\pi_B\pi_A=(uv)$, there is one crossing face, corresponding to the single orbit $\{u,v\}$. Therefore, the number of crossing faces is
$\cyc(\pi_B\pi_A)$. Now let $c_A(\rho)$ be the number of faces of $\rho$ using only $A$-darts, and let $c_B(\rho)$ be the number of faces of $\rho$ using only $B$-darts. Every face is either closed in $A$, closed in $B$, or crossing. Thus,
\begin{equation}\label{eq:face-decomposition}
F_G(\rho)
=
c_A(\rho)+c_B(\rho)+\cyc(\pi_B\pi_A).
\end{equation}

We now compute this expression before and after a balanced Whitney twist.

\begin{theorem}\label{thm:single-twist}
Let $G$ be a $2$-connected cubic graph and $\Gtau$ a balanced Whitney twist of $G$. There is an involution $\Theta:\RS(G)\to\RS(\Gtau)$, fixing all rotations away from the cut and interchanging the two $B$-blocks at the cut, such that
\[
F_{\Gtau}(\Theta(\rho))=F_G(\rho)\qquad\text{for every }\rho\in\RS(G).
\]
Consequently $\Phi_G(y)=\Phi_{\Gtau}(y)$ and $\Gamma_G(x)=\Gamma_{\Gtau}(x)$.
\end{theorem}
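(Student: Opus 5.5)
The plan is to build $\Theta$ directly from the decomposition \eqref{eq:face-decomposition}, and to show that each of its three terms is preserved. Work with the four terminal copies $u_A,v_A,u_B,v_B$ of Definition~\ref{def:whitney-sep}. In $G$ we glue $u_A=u_B$ and $v_A=v_B$; in $\Gtau$ we glue $u_A=v_B$ and $v_A=u_B$. Every non-cut vertex lies entirely in $A$ or entirely in $B$, and it has the same neighbourhood in $G$ and $\Gtau$, so $\Theta$ copies its rotation unchanged.

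At a cut vertex $x$, a rotation is determined by the single datum $\sigma_x(a_x)$, which is one of the two $B$-neighbours of the $B$-terminal glued there. I will therefore record each rotation at a cut vertex as a choice of $B$-dart out of a $B$-terminal. In $\Gtau$, at the vertex $u_A=v_B$, I define $\Theta(\rho)$ by letting the successor of $a_u$ be the neighbour $\sigma_v(a_v)\in\{c_1,c_2\}$ chosen by $\rho$ at $v$. Symmetrically, at $v_A=u_B$ the successor of $a_v$ is $\sigma_u(a_u)\in\{b_1,b_2\}$. This is exactly ``interchanging the two $B$-blocks''. Since the twist is its own inverse, applying the same rule twice returns $\rho$, so $\Theta$ is an involution and in particular a bijection $\RS(G)\to\RS(\Gtau)$.

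Next I compare the three terms of \eqref{eq:face-decomposition}.

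\emph{The term $c_A$.} An $A$-only face never passes through a cut vertex: arriving there on an $A$-dart forces the next dart to be a $B$-dart, and each terminal has $A$-degree $1$. So $c_A$ depends only on rotations at $A$-internal vertices, which $\Theta$ does not change.

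\emph{The term $c_B$.} Here I use that the local face dynamics on $B$-darts at a $B$-terminal depend only on that terminal's chosen successor of the incoming $A$-dart. Concretely, this datum determines which $B$-dart is entered from $A$, which $B$-dart exits to $A$, and the $B\to B$ transition. $\Theta$ gives each $B$-terminal the same datum it had in $\rho$, and $B$-internal rotations are unchanged. Hence the partial face permutation restricted to $B$-darts is literally identical, and $c_B$ is unchanged.

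\emph{The crossing term.} The same reasoning shows that $\pi_A$ is unchanged as a permutation of $\{u_A,v_A\}$, and $\pi_B$ is unchanged as a permutation of $\{u_B,v_B\}$. The crossing faces are the orbits of $g^{-1}\pi_B g\,\pi_A$, where $g$ is the gluing bijection from $A$-terminals to $B$-terminals. Changing $g$ from the straight gluing to the crossed gluing conjugates the transported $\pi_B$ by the transposition $(u_Av_A)$. Because $\operatorname{Sym}\{u,v\}$ is abelian, this conjugation leaves the transported $\pi_B$ unchanged, so $\cyc(\pi_B\pi_A)$ is preserved.

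Summing the three terms gives $F_{\Gtau}(\Theta(\rho))=F_G(\rho)$. Since $\Theta$ is a bijection, this yields $\Phi_G=\Phi_{\Gtau}$, and then $\Gamma_G=\Gamma_{\Gtau}$ follows from the relation $F=2+n/2-2g$ of Lemma~\ref{lem:genus-from-faces}.

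I expect the main obstacle to be the bookkeeping that the $B$-side dynamics is genuinely intrinsic to the terminal data. In particular, I must check that the $\phi$-predecessor of $(x,a_x)$ and the successor of $(a_x,x)$ are both determined by the recorded choice of $B$-dart, so that a $B$-only face through a cut vertex is traced identically in $\Gtau$. Balancedness (Lemma~\ref{lem:dichotomy}) is what makes this possible: both terminals have the same $(1\mid 2)$ shape, so the relabeling is well defined.
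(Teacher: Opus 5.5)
Your proposal is correct and follows essentially the same route as the paper. It uses the same map $\Theta$, in which each $B$-terminal keeps its chosen successor of the incoming $A$-dart, and the same term-by-term preservation of $c_A$, $c_B$, and the crossing count $\cyc(\pi_B\pi_A)$ via the commutativity of $\Sym\{u,v\}$. Your bookkeeping with terminal copies and the gluing map $g$ is a cleaner rendering of the paper's ``relabel $u\leftrightarrow v$ on the $B$-side'' and of its identity $\pi_B^{\Theta\rho}=\tau\pi_B^{\rho}\tau^{-1}=\pi_B^{\rho}$.
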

\begin{proof}

 Let $\tau=(uv)$ be the transposition of the two cut vertices. Define $\Theta\rho$ on $G^\tau$ as follows. Keep all rotations on the $A$-side unchanged. On the $B$-side, change the rotations by relabeling $u\leftrightarrow v$. Finally, at the cut vertices, keep the unique $A$-dart at the same cut vertex and exchange the two $B$-blocks (Figure~\ref{fig:theta-involution}). Applying the same process after twisting back returns $\rho$, so $\Theta$ is an involution from $\mathcal R(G)$ to $\mathcal R(G^\tau)$.

By construction
\[
c_A(\Theta\rho)=c_A(\rho),
\qquad
\pi_A^{\Theta\rho}=\pi_A^\rho.
\]
Sequences of $B$-darts are changed only by the swap $\tau=(uv)$: the rotation $\Theta\rho$ at $v$ restricted to $B$-darts is the $\tau$-relabeling of $\rho$'s rotation at $u$, and vice versa, so $\phi_{\Theta\rho}\circ\tau=\tau\circ\phi_\rho$ on $B$-darts. Thus closed $B$-faces correspond bijectively, and a sequence of $B$-darts from $x$ to $y$ in $G$ becomes a sequence of $B$-darts from $\tau(x)$ to $\tau(y)$ in $G^\tau$. Therefore
\[
c_B(\Theta\rho)=c_B(\rho),
\qquad
\pi_B^{\Theta\rho}
=
\tau \pi_B^\rho \tau^{-1}.
\]
Since $\Sym\{u,v\}\cong S_2$ is abelian, this gives
\[
\pi_B^{\Theta\rho}
=
\pi_B^\rho.
\]
Thus the crossing term is also preserved:
\[
\cyc
\bigl(\pi_B^{\Theta\rho}\pi_A^{\Theta\rho}\bigr)
=
\cyc
\bigl(\pi_B^\rho\pi_A^\rho\bigr).
\]
Substituting into \eqref{eq:face-decomposition}, we obtain
\[
F_{G^\tau}(\Theta\rho)=F_G(\rho).
\]
Therefore $\Theta$ is a face-count-preserving bijection
\[
\Theta:\mathcal R(G)\longrightarrow \mathcal R(G^\tau).
\]
It follows that
\[
\Phi_G(y)=\Phi_{G^\tau}(y).
\]
Finally, $G$ and $G^\tau$ have the same numbers of vertices and edges, so Euler's formula implies that paired rotation systems have the same genus. Hence
\[
\Gamma_G(x)=\Gamma_{G^\tau}(x).
\]

\end{proof}

\begin{figure}[htbp]
\centering
\begin{tikzpicture}[line cap=round, line join=round, >=Latex,
  cut/.style={circle,fill=black,inner sep=1.9pt},
  ablob/.style={draw=aside!60,fill=aside!7,line width=0.8pt},
  bblob/.style={draw=bside!60,fill=bside!7,line width=0.8pt},
  ae/.style={aside,line width=1.05pt},
  be/.style={bside,line width=1.05pt},
  astr/.style={aside,dashed,line width=1.0pt,-{Stealth[length=2.4mm]},shorten >=3pt,shorten <=3pt},
  bstr/.style={bside,dashed,line width=1.0pt,-{Stealth[length=2.4mm]},shorten >=3pt,shorten <=3pt},
  loop/.style={dotted,line width=0.9pt},
  clab/.style={font=\footnotesize},
  tlab/.style={font=\scriptsize},
  theta/.style={-{Stealth[length=2.6mm]},line width=1.1pt,black!70}]

\begin{scope}[shift={(0,0)}]
  \coordinate (u) at (0,1.15); \coordinate (v) at (0,-1.15);
  \draw[ablob] (u) .. controls (-1.95,1.2) and (-1.95,-1.2) .. (v)
                   .. controls (-0.55,-0.5) and (-0.55,0.5) .. (u);
  \draw[bblob] (u) .. controls (1.95,1.2) and (1.95,-1.2) .. (v)
                   .. controls (0.55,-0.5) and (0.55,0.5) .. (u);
  \node[clab,text=aside] at (-0.98,-0.62) {$A$};
  \node[clab,text=bside] at (0.98,-0.62) {$B$};
  \draw[loop,aside] (-0.98,0.42) circle[radius=0.22];
  \node[tlab,text=aside] at (-0.98,0.42) {$c_A$};
  \draw[loop,bside] (0.98,0.42) circle[radius=0.22];
  \node[tlab,text=bside] at (0.98,0.42) {$c_B$};
  \draw[astr] (u) .. controls (-0.85,0.5) and (-0.85,-0.5) .. (v);
  \draw[bstr] (v) .. controls (0.85,-0.5) and (0.85,0.5) .. (u);
  \node[tlab,text=aside,anchor=east] at (-0.72,0) {$\pi_A$};
  \node[tlab,text=bside,anchor=west] at (0.72,0) {$\pi_B$};
  \node[cut] at (u) {}; \node[cut] at (v) {};
  \node[clab,above=2pt] at (u) {$u$}; \node[clab,below=2pt] at (v) {$v$};
  \node[font=\scriptsize, anchor=north, align=center, text width=4.0cm] at (0,-1.62)
    {};
\end{scope}

\begin{scope}[shift={(4.05,0)}]
  \foreach \x/\hdr/\ubA/\ubB/\vbA/\vbB in {0/{$\rho$}/{b_1}/{b_2}/{c_1}/{c_2},
                                           3.35/{$\Theta\rho$}/{c_1}/{c_2}/{b_1}/{b_2}}{
  \begin{scope}[shift={(\x,0)}]
    \coordinate (u) at (0,0.72); \coordinate (v) at (0,-0.72);
    \fill[bside!14] (u) -- ++(32:0.8) arc[start angle=32,end angle=-32,radius=0.8] -- cycle;
    \fill[bside!14] (v) -- ++(32:0.8) arc[start angle=32,end angle=-32,radius=0.8] -- cycle;
    \draw[ae] (u) -- ++(160:0.6);
    \draw[be] (u) -- ++(22:0.6); \draw[be] (u) -- ++(-22:0.6);
    \draw[ae] (v) -- ++(200:0.6);
    \draw[be] (v) -- ++(22:0.6); \draw[be] (v) -- ++(-22:0.6);
    \node[tlab,anchor=east] at ($(u)+(160:0.66)$) {$a_u$};
    \node[tlab,anchor=west] at ($(u)+(22:0.66)$)  {$\ubA$};
    \node[tlab,anchor=west] at ($(u)+(-22:0.66)$) {$\ubB$};
    \node[tlab,anchor=east] at ($(v)+(200:0.66)$) {$a_v$};
    \node[tlab,anchor=west] at ($(v)+(22:0.66)$)  {$\vbA$};
    \node[tlab,anchor=west] at ($(v)+(-22:0.66)$) {$\vbB$};
    \node[cut] at (u) {}; \node[cut] at (v) {};
    \node[clab,above=3pt] at (u) {$u$}; \node[clab,below=3pt] at (v) {$v$};
    \node[font=\small] at (0,1.42) {\hdr};
  \end{scope}}
  \draw[theta] (1.28,0)--(2.08,0);
  \node[font=\footnotesize,anchor=south] at (1.68,0.08) {$\Theta$};
  \node[tlab] at (0,-1.42)    {$\sigma_u=(a_u\,\textcolor{bside!60!black}{b_1\,b_2})$};
  \node[tlab] at (0,-1.78)    {$\sigma_v=(a_v\,\textcolor{bside!60!black}{c_1\,c_2})$};
  \node[tlab] at (3.35,-1.42) {$\sigma_u=(a_u\,\textcolor{bside!60!black}{c_1\,c_2})$};
  \node[tlab] at (3.35,-1.78) {$\sigma_v=(a_v\,\textcolor{bside!60!black}{b_1\,b_2})$};
  \node[font=\scriptsize, anchor=north, align=center, text width=4.6cm] at (1.68,-2.12)
    {the $B$-blocks exchange poles};
\end{scope}

\begin{scope}[shift={(-1.4,-5.0)}]
  \foreach \x/\hdr/\rev in {0/{$G$}/0, 2.8/{$G^{\tau}$}/1}{
  \begin{scope}[shift={(\x,0)}]
    \coordinate (u) at (0,0.85); \coordinate (v) at (0,-0.85);
    \draw[bblob] (u) .. controls (1.75,0.95) and (1.75,-0.95) .. (v)
                     .. controls (0.3,-0.35) and (0.3,0.35) .. (u);
    \draw[loop,bside] (1.0,-0.32) circle[radius=0.18];
    \node[font=\tiny,text=bside] at (1.0,-0.32) {$c_B$};
    \ifnum\rev=0
      \draw[bstr] (v) .. controls (0.8,-0.48) and (0.8,0.48) .. (u);
    \else
      \draw[bstr] (u) .. controls (0.8,0.48) and (0.8,-0.48) .. (v);
    \fi
    \node[cut] at (u) {}; \node[cut] at (v) {};
    \node[tlab,anchor=east] at ($(u)+(-0.1,0)$) {$u$};
    \node[tlab,anchor=east] at ($(v)+(-0.1,0)$) {$v$};
    \node[font=\small] at (0.6,1.35) {\hdr};
  \end{scope}}
  \draw[theta] (1.45,0)--(2.2,0);
  \node[font=\footnotesize,anchor=south] at (1.82,0.08) {$\Theta$};
  \node[font=\scriptsize, anchor=north, align=center, text width=4.3cm] at (1.55,-1.62)
    {closed $B$-faces persist};
\end{scope}

\begin{scope}[shift={(4.35,-5.0)}]
  \foreach \x/\hdr in {0/{$G$}, 3.3/{$G^{\tau}$}}{
  \begin{scope}[shift={(\x,0)}]
    \coordinate (u) at (0,0.9); \coordinate (v) at (0,-0.9);
    \draw[ablob] (u) .. controls (-1.55,0.95) and (-1.55,-0.95) .. (v)
                     .. controls (-0.44,-0.4) and (-0.44,0.4) .. (u);
    \draw[bblob] (u) .. controls (1.55,0.95) and (1.55,-0.95) .. (v)
                     .. controls (0.44,-0.4) and (0.44,0.4) .. (u);
    \draw[astr] (u) .. controls (-0.72,0.42) and (-0.72,-0.42) .. (v);
    \draw[bstr] (v) .. controls (0.72,-0.42) and (0.72,0.42) .. (u);
    \node[font=\tiny,text=aside,anchor=east] at (-0.66,0) {$\pi_A$};
    \node[font=\tiny,text=bside,anchor=west] at (0.66,0) {$\pi_B$};
    \node[cut] at (u) {}; \node[cut] at (v) {};
    \node[tlab,above=2pt] at (u) {$u$}; \node[tlab,below=2pt] at (v) {$v$};
    \node[font=\small] at (0,1.58) {\hdr};
  \end{scope}}
  \draw[theta] (1.45,0)--(2.25,0);
  \node[font=\footnotesize,anchor=south] at (1.85,0.08) {$\Theta$};
  \node[font=\scriptsize, anchor=north, align=center, text width=4.9cm] at (1.65,-1.62)
    {$\pi_A$ and $\pi_B$ are unchanged};
\end{scope}
\end{tikzpicture}
\caption{The involution $\Theta$ of Theorem~\ref{thm:single-twist}.}
\label{fig:theta-involution}
\end{figure}

Thus, a balanced twist changes the rotation systems but not the face-count polynomial.

\section{Cubic representations of the cycle matroid}
\label{sec:cubic-whitney}

This section uses Tutte's decomposition of a $2$-connected graph into its $3$-connected components~\cite{Tutte}, in the form of an SPQR tree~\cite{DiBattistaTamassia}: a tree whose nodes are $3$-connected graphs ($R$-nodes), cycles ($S$-nodes), and bonds ($P$-nodes), in which adjacent nodes are joined by identifying a \emph{virtual edge} of each, and no two $S$-nodes and no two $P$-nodes are adjacent. Each virtual edge denotes a $2$-separation, which we say its tree edge \emph{displays}. The vertices of a node are called its \emph{skeleton vertices} and its edges its \emph{members}: a real member is an edge of the graph, and a virtual member stands in for the whole side beyond it. Figure~\ref{fig:spqr-example} shows a small cubic example. Starting from a specified SPQR tree, we can assemble a graph by making two kinds of choices: the identification at each tree edge and the cyclic arrangement of the members of each $S$-node. Different choices can produce non-isomorphic graphs. 

\begin{figure}[H]
\begin{center}
\begin{tikzpicture}[line cap=round, line join=round, >=Latex,
  iv/.style={circle,draw=black!55,fill=white,inner sep=1.5pt,line width=0.6pt},
  cut/.style={circle,fill=black,inner sep=2.1pt},
  ge/.style={line width=1.0pt},
  virt/.style={dashed,black!60,line width=1.0pt},
  ident/.style={black!30,line width=0.9pt},
  clab/.style={font=\footnotesize},
  ttl/.style={font=\small}]
\begin{scope}[shift={(0,0)}]
  \coordinate (A1) at (-1.7,0.55); \coordinate (B1) at (-1.7,-0.55);
  \coordinate (U1) at (-0.55,0.95); \coordinate (V1) at (-0.55,-0.95);
  \coordinate (U2) at (0.55,0.95);  \coordinate (V2) at (0.55,-0.95);
  \coordinate (A2) at (1.7,0.55);   \coordinate (B2) at (1.7,-0.55);
  \draw[ge] (U1)--(A1) (U1)--(B1) (A1)--(B1) (A1)--(V1) (B1)--(V1);
  \draw[ge] (U2)--(A2) (U2)--(B2) (A2)--(B2) (A2)--(V2) (B2)--(V2);
  \draw[ge] (U1)--(U2) (V1)--(V2);
  \node[iv] at (A1) {}; \node[iv] at (B1) {}; \node[iv] at (A2) {}; \node[iv] at (B2) {};
  \node[cut] at (U1) {}; \node[cut] at (V1) {}; \node[cut] at (U2) {}; \node[cut] at (V2) {};
  \node[clab,above=1.5pt] at (U1) {$u$};  \node[clab,below=1.5pt] at (V1) {$v$};
  \node[clab,above=1.5pt] at (U2) {$u'$}; \node[clab,below=1.5pt] at (V2) {$v'$};
\end{scope}
\begin{scope}[shift={(4.15,0)}]
  \coordinate (Ra) at (0,0.55); \coordinate (Rb) at (0,-0.55);
  \coordinate (Ru) at (1.1,0.8); \coordinate (Rv) at (1.1,-0.8);
  \draw[ge] (Ru)--(Ra) (Ru)--(Rb) (Ra)--(Rb) (Ra)--(Rv) (Rb)--(Rv);
  \draw[virt] (Ru)--(Rv);
  \node[iv] at (Ra) {}; \node[iv] at (Rb) {};
  \node[cut] at (Ru) {}; \node[cut] at (Rv) {};
  \node[ttl] at (0.5,-1.45) {$R$};
  \coordinate (Su) at (2.35,0.8); \coordinate (Sup) at (3.6,0.8);
  \coordinate (Sv) at (2.35,-0.8); \coordinate (Svp) at (3.6,-0.8);
  \draw[ge] (Su)--(Sup) (Sv)--(Svp);
  \draw[virt] (Su)--(Sv) (Sup)--(Svp);
  \node[cut] at (Su) {}; \node[cut] at (Sup) {}; \node[cut] at (Sv) {}; \node[cut] at (Svp) {};
  \node[clab,above=1.5pt] at (Su) {$u$};  \node[clab,below=1.5pt] at (Sv) {$v$};
  \node[clab,above=1.5pt] at (Sup) {$u'$}; \node[clab,below=1.5pt] at (Svp) {$v'$};
  \node[ttl] at (2.975,-1.45) {$S$};
  \coordinate (Qu) at (4.85,0.8); \coordinate (Qv) at (4.85,-0.8);
  \coordinate (Qa) at (5.95,0.55); \coordinate (Qb) at (5.95,-0.55);
  \draw[ge] (Qu)--(Qa) (Qu)--(Qb) (Qa)--(Qb) (Qa)--(Qv) (Qb)--(Qv);
  \draw[virt] (Qu)--(Qv);
  \node[iv] at (Qa) {}; \node[iv] at (Qb) {};
  \node[cut] at (Qu) {}; \node[cut] at (Qv) {};
  \node[ttl] at (5.45,-1.45) {$R$};
  \draw[ident] (1.22,0) to[bend left=22] (2.23,0);
  \draw[ident] (3.72,0) to[bend left=22] (4.73,0);
\end{scope}
\end{tikzpicture}
\end{center}
\caption{A cubic graph (left) and its SPQR decomposition (right).}
\label{fig:spqr-example}
\end{figure}

\begin{lemma}[Whitney's $2$-isomorphism theorem, $2$-connected form~\cite{Whitney,Truemper}]\label{lem:whitney-2iso}
Let $G$ and $H$ be $2$-connected graphs. Then $M(G)\cong M(H)$ if and only if a graph isomorphic to $H$ can be obtained from $G$ by a finite sequence of Whitney twists, each performed across a $2$-separation.
\end{lemma}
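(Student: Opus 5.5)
The plan is to derive Lemma~\ref{lem:whitney-2iso} from Whitney's original theorem (Proposition~\ref{prop:twist-matroid}), using the SPQR tree to make both directions concrete.

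\emph{Easy direction.} Suppose a graph isomorphic to $H$ arises from $G$ by a sequence of twists. Proposition~\ref{prop:twist-matroid} says each twist preserves the cycle matroid under the identity on edges. Composing these edge bijections along the sequence, and then with the edge map induced by the final graph isomorphism, gives a bijection $E(G)\to E(H)$ carrying cycles to cycles, so $M(G)\cong M(H)$.

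\emph{Converse.} Fix a $2$-isomorphism $\psi\colon E(G)\to E(H)$. The key fact is that the Tutte/SPQR decomposition of a $2$-connected graph is determined by its cycle matroid:
\begin{itemize}
\item the $2$-separations are exactly the edge partitions $(X,Y)$ with $r(X)+r(Y)-r(E)=1$;
\item the $3$-connected components are the $3$-connected minors cut out by the tree.
\end{itemize}
So $\psi$ carries the SPQR tree $T_G$ onto $T_H$, sending $S$-, $P$- and $R$-nodes to nodes of the same type and preserving the virtual-edge pairing.

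Node by node, the cases go as follows.
\begin{itemize}
\item For an $R$-node, Whitney's theorem for $3$-connected graphs says the matroid determines the graph up to isomorphism. Hence the skeletons agree, up to how each virtual edge is oriented at its two endpoints.
\item For a $P$-node there is nothing to choose.
\item For an $S$-node, only the cyclic order of its members can differ.
\end{itemize}
The plan is to show that each of these discrepancies is removed by twists.
\begin{itemize}
\item Reversing the orientation of a tree edge's identification is exactly a twist across the $2$-separation that tree edge displays.
\item Any permutation of the members of an $S$-node is a product of adjacent transpositions. Swapping two consecutive members $e_i,e_{i+1}$ of a cycle is a twist across the $2$-separation that cuts out the path $e_ie_{i+1}$ (together with everything hanging off it) at its two end vertices.
\end{itemize}

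Then I proceed by induction on the number of tree nodes. At each step I correct one node's arrangement by twists that do not disturb nodes already matched. After all nodes are matched, $G$ has been transformed into a graph with the same SPQR tree, the same skeletons and the same gluings as $H$, and such a graph is isomorphic to $H$.

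The main obstacle is the matroid-determines-decomposition step: checking that $\psi$ really identifies the tree nodes and virtual edges, and that the $3$-connected uniqueness applies to skeletons that contain virtual edges. I would handle this by citing Whitney~\cite{Whitney} and Truemper~\cite{Truemper} for the standard proof, rather than redoing it.
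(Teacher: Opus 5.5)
Your plan is correct in outline, but it takes a different route from the paper's proof of this lemma. The paper proves nothing itself here: it takes the ``if'' direction from Proposition~\ref{prop:twist-matroid}, and the ``only if'' direction is Truemper's Theorem~1, which also bounds the number of twists by $n-2$.

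What you sketch is essentially the machinery the paper develops afterwards for Theorem~\ref{thm:cubic-whitney}, with the balance requirement dropped. That machinery has three parts: matroid invariance of the decomposition (Proposition~\ref{prop:realizations}(i)), tree-edge twists that flip one identification, and $S$-node twists that act as arc reversals. Your version buys an explicit list of which twists suffice, which is exactly what the cubic refinement needs. Your adjacent transposition reverses a two-member arc, and in a cubic graph that twist is unbalanced by Lemma~\ref{lem:snode-structure}(b). This is why the paper uses three-member arcs in Lemma~\ref{lem:snode-rearrange}. On the other hand, you still cite Whitney and Truemper for the hard step, and Truemper's theorem already is the full converse. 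So your argument is not more self-contained than the paper's one-line citation.

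Two points need correcting.

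First, your bulleted ``key fact'' is false. Graph $2$-separations are not exactly the partitions with $r(X)+r(Y)-r(E)=1$. In $C_6$ with edges $e_1,\dots,e_6$ in order, $X=\{e_1,e_3,e_5\}$ satisfies the rank condition, yet it is not a Whitney separation. Worse, the set of graph $2$-separations is not even a matroid invariant: reordering the members of an $S$-node turns an arc into a non-arc. What is matroid-invariant is the canonical Cunningham--Edmonds tree. Its tree-edge separations are graph $2$-separations in every realization, as in Proposition~\ref{prop:realizations}(i). Cite that, not the rank characterization, to conclude that $\psi$ carries $T_G$ onto $T_H$.

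Second, swapping $e_i,e_{i+1}$ is a reversal of the arc. It therefore also flips the identification at the tree edge of each virtual member among $e_i,e_{i+1}$. So your claim that each step leaves already-matched data undisturbed is false for gluings. The fix is the paper's ordering: first set every $S$-node arrangement, then correct every identification by single tree-edge twists, which change nothing else.
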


\begin{proof}
The ``if'' direction is Proposition~\ref{prop:twist-matroid}. The ``only if'' direction is Truemper's: two $2$-connected graphs with isomorphic cycle matroids are joined by a sequence of at most $n-2$ Whitney twists, each across a $2$-separation~\cite[Theorem~1]{Truemper}. 
\end{proof}

Theorem~\ref{thm:cubic-whitney} strengthens this statement for cubic graphs, showing the twists can be chosen to be balanced so that every intermediate graph is also cubic. Part (i) of the next proposition is not new. The tree is due to Tutte and to Cunningham--Edmonds, and the $3$-connected pieces to Whitney. Part (ii) is elementary, and we only prove it because the proof of Theorem~\ref{thm:cubic-whitney} uses its exact form.

\begin{proposition}[Graphic realizations of a connected matroid]\label{prop:realizations}
Let $M$ be a connected graphic matroid and let $G,H$ be $2$-connected graphs with $M(G)\cong M(H)\cong M$.
\begin{enumerate}
\item[\textup{(i)}] $G$ and $H$ have isomorphic Tutte decompositions: there is an isomorphism of their decomposition trees sending each node of $G$ to a node of $H$ of the same type, isomorphic as a graph, and sending the same elements of $M$. The tree, the node graphs, and the assignment of elements to nodes are invariants of $M$. Moreover, for each tree edge, the partition of $E(M)$ into the elements of the two subtrees is, in every $2$-connected realization of $M$, the edge partition of a Whitney separation whose cut vertices are the ends of the two identified virtual edges.
\item[\textup{(ii)}] Fix such isomorphisms. A realization is specified by its assembly data: for each $S$-node, the cyclic arrangement of its members and, for each tree edge, one of the two identifications of the ends of its two virtual edges. A Whitney twist across the two skeleton vertices of a $P$-node partitions its members into two nonempty sets. Twisting one side flips the identification at each tree edge whose virtual member lies on that side and changes nothing else. Thus, $P$-nodes require no arrangement data. Realizations with the same assembly data are isomorphic. A Whitney twist across a tree-edge separation flips the identification at that tree edge and changes nothing else. A Whitney twist across a separation whose cut is a pair of skeleton vertices of one $S$-node reverses an arc of that node's arrangement, flips the identification at each tree edge whose member lies on the arc, and changes nothing else.
\end{enumerate}
\end{proposition}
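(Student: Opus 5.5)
Part (i) will be taken from the literature rather than reproved. The Tutte/Cunningham--Edmonds decomposition of a connected matroid is canonical: its tree, the node types, the node matroids and the assignment of elements to nodes depend only on $M$. By Whitney's theorem, each $3$-connected node matroid that is graphic has a unique graph, so $R$-nodes are determined as graphs. $S$-nodes are cycles and $P$-nodes are bonds, and each is determined by its number of members. For a $2$-connected realization $G$, the graph-theoretic SPQR tree of $G$ realizes this matroid decomposition, since its $2$-sums are graph $2$-sums. Each tree edge therefore splits $E(M)$ into two sides, each of which is the edge set of a connected subgraph of $G$, and these subgraphs meet exactly in the two ends of the identified virtual edge. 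Both sides contain a path between those ends, because each side's node graph is $2$-connected after the virtual edge is added back. This gives the Whitney-separation statement.

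For part (ii), assembly will be described as iterated $2$-sums along the tree. I will root the tree and glue each child node to its parent along its virtual edge. The only freedom at a tree edge is which end of the child's virtual edge goes to which end of the parent's, so there are exactly two identifications. Inside a node, the isomorphism type of an $R$-node is fixed by (i). A $P$-node is a bond on two vertices, so it needs no internal arrangement data. An $S$-node is a cycle whose member-to-position map is exactly its cyclic arrangement. Once the arrangements and identifications are fixed, induction on the rooted tree shows the resulting graph is unique up to isomorphism, which proves that realizations with equal assembly data are isomorphic.

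It remains to describe the effect of each kind of twist. A twist across a tree-edge separation swaps the identification of $u,v$ on one side, and therefore flips that tree edge's identification. All gluings strictly inside either side are unchanged, because a relabeling of $u\leftrightarrow v$ on one side only reflects that side as a whole. For a $P$-node with skeleton vertices $\{u,v\}$, the separation groups members into two sets; twisting one side reverses the $u,v$ orientation of every virtual member there, so it flips exactly those tree edges. For an $S$-node, a $2$-cut $\{x,y\}$ of skeleton vertices splits the cycle into two arcs. Twisting one arc reverses its order in the cyclic arrangement, and each virtual member on that arc has its ends swapped, so its tree edge's identification flips. Everything else is untouched.

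\textbf{Main obstacle.} I expect the hardest step to be the bookkeeping that shows ``changes nothing else'', namely that a reflection of a whole side gives the same assembly data for all deeper tree edges. I would handle this by recording each identification relative to the orientation of the two ends of the parent's virtual edge. A global relabeling of $u\leftrightarrow v$ on a side is then an isomorphism of that side which preserves every internal identification and arrangement up to the equivalence already quotiented out. For an $S$-node this equivalence is reversal of the cycle, and for an $R$-node it is an automorphism of the fixed graph carrying the virtual edge's ends consistently.
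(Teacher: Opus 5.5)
Your proposal is correct and follows essentially the same route as the paper. Part (i) is taken from Tutte, Cunningham--Edmonds and Whitney's uniqueness for $3$-connected graphs. Reconstruction from assembly data is an induction over the tree: the paper peels off leaves, while you glue children to parents. The three kinds of twist are analyzed exactly as you do, by checking which gluings change when one side is re-glued.

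The bookkeeping you flag as the main obstacle is lighter than you expect. A twist re-glues one side without moving any edge inside it, so every datum internal to a side is literally unchanged, and no $R$-node automorphism or extra quotient of the data is needed.
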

 \begin{proof}
(i) For the canonical tree, see \cite{Tutte,CunninghamEdmonds} and \cite[Theorem~8.3.10]{Oxley}. For the graph side, see \cite{Tutte,HopcroftTarjan,DiBattistaTamassia}. Uniqueness gives the invariance, and each node graph is determined by its type. An $R$-node is a $3$-connected matroid and has a unique graph by \cite[Lemma~5.3.2]{Oxley}. An $S$-node is a circuit, realized only by a cycle. A $P$-node is a parallel class, realized only by a set of parallel edges. For the last statement, consider a tree edge of the SPQR tree of $G$. It is realized by the $2$-vertex cut at the common ends of its two identified virtual edges, separating the elements of the two subtrees, and both sides contain a path between the cut vertices because $G$ is $2$-connected. Thus, the partition is the edge partition of a Whitney separation of $G$, and likewise of $H$.

(ii) Reconstruction is shown by induction on the number of nodes. By (i), a leaf node's elements form one side of a Whitney separation of the realization, that side is determined by the node's data, and it glues onto the rest in the one way its identification prescribes, so equal data give isomorphic realizations. For the twist cases, consider the two sides of the cut. Across a tree-edge separation, the sides are the two subtrees, so the twist exchanges the two gluings at that tree edge and moves no edge within either side. Across a pair of skeleton vertices of an $S$-node, the sides are the two arcs of that cycle, so re-gluing one arc with its ends exchanged reverses it in the arrangement and, at each member on the arc, swaps the roles of the member's two ends, which is the identification flip at its tree edge. Everything not on the arc is unchanged. For a $P$-node, the two sides are sets of parallel members with the same two ends. Re-gluing one side exchanges those ends, so it flips exactly the identifications at the tree edges of the virtual members on that side. Real members are unchanged, and there is no arrangement to change.
\end{proof}

Figure~\ref{fig:tree-edge-datum} shows the identification data: the same two pieces, with the two gluings.
 
\begin{figure}[H]
\begin{center}
\begin{tikzpicture}[line cap=round, line join=round,
  cut/.style={circle,fill=black,inner sep=2.0pt},
  virt/.style={dashed,black!60,line width=0.95pt},
  glue/.style={black!35,line width=1.35pt},
  clab/.style={font=\footnotesize},
  ttl/.style={font=\small}]
\foreach \sx/\cross/\nm in {0/0/straight, 6.4/1/crossed}{
\begin{scope}[shift={(\sx,0)}]
  \draw[aside!55,fill=aside!8,line width=0.9pt] (-1.15,0) ellipse [x radius=1.05, y radius=0.92];
  \draw[bside!55,fill=bside!8,line width=0.9pt] (2.35,0) ellipse [x radius=1.05, y radius=0.92];
  \coordinate (uA) at (-0.42,0.55); \coordinate (vA) at (-0.42,-0.55);
  \coordinate (uB) at (1.62,0.55);  \coordinate (vB) at (1.62,-0.55);
  \draw[virt] (uA)--(vA); \draw[virt] (uB)--(vB);
  \ifnum\cross=0
    \draw[glue] (uA)--(uB); \draw[glue] (vA)--(vB);
  \else
    \draw[glue] (uA)--(vB); \draw[glue] (vA)--(uB);
  \fi
  \node[cut] at (uA) {}; \node[cut] at (vA) {}; \node[cut] at (uB) {}; \node[cut] at (vB) {};
  \node[clab,above=1.5pt] at (uA) {$u_A$}; \node[clab,below=1.5pt] at (vA) {$v_A$};
  \node[clab,above=1.5pt] at (uB) {$u_B$}; \node[clab,below=1.5pt] at (vB) {$v_B$};
  \node[ttl] at (0.6,-1.42) {\nm};
\end{scope}}
\end{tikzpicture}
\end{center}
\caption{The identification data at a tree edge.}
\label{fig:tree-edge-datum}
\end{figure}
 
Whether a separation admits a balanced twist is decided by the edge partition alone.
 
\begin{lemma}[Unbalanced separations are rigid]\label{lem:balanced-canonical}
Let $\{u,v\}$ be a $2$-separation of a $2$-connected cubic graph $G$, with sides $A,B$, so that $E(G)$ is partitioned as $E(A)\du E(B)$ and each side contains a $u$--$v$ path. Whether the separation is balanced depends only on this edge partition, not on the gluing. It is balanced if and only if side $A$ meets its two poles in equally many edges. If the separation is unbalanced, then exactly one of the two gluings of $A$ to $B$ is cubic, so every cubic graph with this cycle matroid and this $2$-separation uses that same gluing. If it is balanced, both gluings are cubic and differ by a balanced Whitney twist.
\end{lemma}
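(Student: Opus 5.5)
The plan is to reduce everything to a degree count at the two poles. Fix the edge partition $E(G)=E(A)\du E(B)$. The edges of $A$ incident to the pole $u$ are determined by the partition, and the pole itself is determined as a vertex of the graph $A$; the gluing only decides which pole of $B$ is identified with $u$. So I will first note that the multiset $\{a_u,a_v\}$ of $A$-degrees at the two poles, and likewise $\{b_u,b_v\}$, depends only on the partition.

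Next I apply Lemma~\ref{lem:dichotomy} in the given cubic graph $G$ (straight gluing). It gives $a_u+b_u=a_v+b_v=3$ with all four degrees in $\{1,2\}$. Hence either $a_u=a_v$ or $\{a_u,a_v\}=\{1,2\}$. By Definition~\ref{def:balanced} the separation is balanced exactly when $a_u=a_v$, which is a property of side $A$ alone. This proves the first two assertions.

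For the gluing statement I compute the pole degrees under both gluings. The straight gluing gives degrees $a_u+b_u$ and $a_v+b_v$, which are both $3$. The crossed gluing gives degrees $a_u+b_v$ and $a_v+b_u$.
\begin{itemize}
\item In the balanced case $b_u=b_v$, so the crossed gluing also has degrees $3,3$. All non-pole degrees are untouched by the regluing, so both gluings are cubic. By definition they differ by the Whitney twist across $\{u,v\}$, and that twist is balanced.
\item In the unbalanced case, say $a_u=1$ and $a_v=2$, we get $b_u=2$ and $b_v=1$. The crossed gluing then has pole degrees $1+1=2$ and $2+2=4$, matching Lemma~\ref{lem:cubic-iff-balanced}, so only the straight gluing is cubic.
\end{itemize}

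The step needing care is the claim that \emph{every} cubic graph with this cycle matroid and this $2$-separation uses the same gluing. Here I invoke Proposition~\ref{prop:realizations}(i): in any $2$-connected realization $H$, the same edge partition is a Whitney separation. The main obstacle is that the sides $A$ and $B$ inside $H$ need not be isomorphic to those in $G$ as graphs. I would handle this by working with the assembly data of Proposition~\ref{prop:realizations}(ii), and, to keep the argument safe, restricting the claim to realizations whose two sides agree with $A$ and $B$ respectively. Under that restriction the only freedom left is the identification at the tree edge displaying the separation. The pole-degree computation above then forces the straight gluing whenever $H$ is cubic.
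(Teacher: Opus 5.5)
Your pole-degree count is the paper's proof, and it is already complete at the end of your bullet list. The paper treats the clause about every cubic graph with this matroid and this $2$-separation as a restatement of ``exactly one of the two gluings of $A$ to $B$ is cubic.'' So your restriction to realizations whose sides are $A$ and $B$ is the intended reading, not a weakening.

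Drop the justification in your last paragraph, though, because it is wrong in exactly the case it addresses. Proposition~\ref{prop:realizations}(i) only says that partitions displayed by \emph{tree edges} persist in every realization. By Lemma~\ref{lem:snode-structure}(c), every tree-edge separation of a cubic graph is balanced, so an unbalanced separation is displayed by no tree edge; it cuts an $S$-node into two even arcs. There is therefore no ``identification at the tree edge displaying the separation'' to appeal to. Moreover, such an edge partition need not be a separation of another realization at all. In Remark~\ref{rem:arrangement-needed}, the two copies of the $2$-pole graph $A$ form an even arc of $G_1$, hence an unbalanced separation, but they are not contiguous in $G_2$, which has the same cycle matroid. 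None of this machinery is needed. The hypothesis that $H$ has this $2$-separation with sides $A$ and $B$ already makes $H$ one of the two gluings, and your degree computation settles which one.
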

 
\begin{proof}
Regard each side as a graph with two poles. Its pole-degrees are determined by the side alone, hence by the edge partition. By Lemma~\ref{lem:dichotomy} each is 1 or 2, and the two degrees at each pole of $G$ sum to $3$, so the pole-degrees of $B$ complement those of $A$. By Definition~\ref{def:balanced}, the separation is balanced exactly when the pole-degrees of $A$ are equal. A gluing matches the poles of $A$ with the poles of $B$ in one of two ways, and the result is cubic if and only if the matched pole-degrees sum to $3$ at both poles. If the pole-degrees of $A$ are equal, say $(1,1)$ so that those of $B$ are $(2,2)$, then both matchings give $1+2=3$: both gluings are cubic, and they differ by the Whitney twist at $\{u,v\}$, which is balanced by Lemma~\ref{lem:cubic-iff-balanced}. If the pole-degrees of $A$ are $\{1,2\}$, only the matching that pairs $A$'s degree-$1$ pole with $B$'s degree-$2$ pole sums to $3$ at both poles, so exactly one gluing is cubic.
\end{proof}

\begin{lemma}[Balanced twists stay in the class]\label{lem:chain-maintenance}
Let $G$ be a finite simple $2$-connected cubic graph and let $G^\tau$ be obtained from $G$ by a balanced Whitney twist. Then $G^\tau$ is again finite, simple, $2$-connected, and cubic.
\end{lemma}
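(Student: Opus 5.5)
We need to show that $G^\tau$ is finite, simple, $2$-connected and cubic. Finiteness is immediate, since $G^\tau$ has the same vertex set (with the cut vertices $u,v$ retained) and the same edge set as $G$. Cubicity is Lemma~\ref{lem:cubic-iff-balanced}: the twist is balanced, so by Lemma~\ref{lem:dichotomy} the pole-degrees are $(1,1)\mid(2,2)$, and after the crossed gluing both $u$ and $v$ again have degree $1+2=3$. Every other vertex lies entirely on one side and keeps its incident edges. So the plan reduces to checking simplicity and $2$-connectivity.

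\textbf{Simplicity.} The twist creates no loops. An edge of $A$ or $B$ joins two distinct vertices of that side, and the regluing identifies $u_B$ with $v_A$ and $v_B$ with $u_A$. These identifications are bijective between the pole sets $\{u_A,v_A\}$ and $\{u_B,v_B\}$, so no two distinct vertices of the same side become equal.

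A parallel pair would therefore consist of an $A$-edge and a $B$-edge with the same ends. Vertices common to both sides are only $u,v$, so this edge would have to be $uv$ itself. Suppose $uv$ lies in $A$. Then $A$ contains the edge $uv$, and since $a_u=a_v=1$, that edge is the only $A$-edge at $u$ and at $v$. Then $A$ consists of just the edge $uv$, since nothing else of $A$ can attach. Any other vertex of $A$ would lie in a component of $A$ meeting the rest of $G$ only through $u$ or $v$, which is impossible because those poles have no further $A$-edges.

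But a single edge has no vertex outside $\{u,v\}$, so $\{u,v\}$ does not genuinely separate. We will adopt the convention, implicit in Definition~\ref{def:whitney-sep}, that both sides are nontrivial. Even if that convention is not assumed, such a twist would produce the same graph. Suppose instead that $uv$ lies in $B$. Since $b_u=b_v=2$, a second parallel $uv$ edge would have to lie in $A$, which is the case already excluded. Either way no parallel edge arises.

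\textbf{$2$-connectivity.} We will check this directly via cut vertices. Each side $X\in\{A,B\}$, viewed with its two poles, has the property that $X+u_Xv_X$ (adding a virtual pole edge) is $2$-connected, or at least that every vertex of $X$ lies on a $u$--$v$ path in $X$ plus blocks hanging along it. This is the standard fact that both sides of a $2$-separation of a $2$-connected graph, augmented by the virtual edge, are $2$-connected.

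The crossed gluing of two such augmented pieces is a $2$-sum along the virtual edges with the opposite orientation, and a $2$-sum of $2$-connected graphs is $2$-connected regardless of the orientation. Alternatively, we can invoke Proposition~\ref{prop:twist-matroid}. The twist preserves the cycle matroid, which is connected because $G$ is $2$-connected and loopless with at least $3$ vertices. Moreover $G^\tau$ has no isolated vertices and is connected, since it is glued along two vertices of connected sides. For such graphs, a connected cycle matroid is equivalent to $2$-connectivity. This matroid route is the cleanest, and we will use it.

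\textbf{Main obstacle.} The only delicate point is the degenerate separation where one side is a single edge $uv$, which is exactly the case that could create a parallel pair. It needs to be addressed either by the nontriviality convention or by observing that the twist is then trivial.
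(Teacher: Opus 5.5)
Your proof is correct. For cubicity and $2$-connectivity it follows the paper: cubicity is Lemma~\ref{lem:cubic-iff-balanced}, and $2$-connectivity comes from two facts. First, the twist preserves the connected matroid $M$. Second, for loopless graphs on at least three vertices without isolated vertices, $2$-connectivity is equivalent to matroid connectivity (Oxley, Prop.~4.1.7). The paper gets looplessness, which that criterion needs, from connectivity of $M$; you get it from your direct gluing argument.

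The genuine difference is simplicity. The paper obtains it from the matroid as well. The twist is the identity on edges and preserves $M$, and a loop or parallel pair in $G^\tau$ would be a circuit of size at most $2$, which $M$ lacks because $G$ is simple. You instead argue directly from the gluing. That works, but the case analysis is longer than needed, and the ``main obstacle'' you identify is not really one. A parallel pair in $G^\tau$ must be an $A$-edge and a $B$-edge both joining the two poles. Under the straight gluing these two edges already have the same ends, so they would be parallel in $G$. Simplicity of $G$ therefore excludes the pair at once, with no need for $a_u=a_v=1$, the single-edge-side analysis, or a nontriviality convention. That convention is not actually implicit in Definition~\ref{def:whitney-sep}, since a lone edge $uv$ does contain a $u$--$v$ path; your fallback remark that such a twist returns $G$ itself covers that case anyway.

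Your route has the merit of being elementary and self-contained. The paper's route is a single observation, $M(G^\tau)=M(G)$ under the identity on edges, that handles loops, parallel edges, and $2$-connectivity uniformly.
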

\begin{proof}
Cubicity is Lemma~\ref{lem:cubic-iff-balanced}. The twist preserves the cycle matroid $M$ (Proposition~\ref{prop:twist-matroid}), which is connected because $G$ is $2$-connected on at least three vertices. Hence, $G^\tau$ is loopless (a connected matroid on more than one element has no loops), and it is cubic on the same vertex set, so it has at least three vertices and no isolated vertices. Thus, by \cite[Proposition~4.1.7]{Oxley}, it is $2$-connected. Finally, since $G$ is simple, $M$ has no circuit of size at most $2$. A loop or a parallel pair of edges in $G^\tau$ would be such a circuit, so $G^\tau$ is simple.
\end{proof}

\begin{figure}[H]
\begin{center}
\resizebox{0.2\textwidth}{!}{
\begin{tikzpicture}[line cap=round, line join=round, >=Latex,
  cut/.style={circle,fill=black,inner sep=2.1pt},
  ge/.style={line width=1.0pt},
  lens/.style={dashed,black!55,fill=black!6,line width=0.8pt},
  num/.style={font=\scriptsize},
  clab/.style={font=\footnotesize}]
  \foreach \i/\ang in {1/90,2/30,3/-30,4/-90,5/210,6/150}{
    \coordinate (w\i) at (\ang:1.62);
  }
  \draw[ge] (w1)--(w2); \draw[ge] (w3)--(w4); \draw[ge] (w5)--(w6);
  \path (w6) -- (w1) coordinate[midway] (M1);
  \path (w2) -- (w3) coordinate[midway] (M3);
  \path (w4) -- (w5) coordinate[midway] (M5);
  \draw[lens,rotate around={30:(M1)}]  (M1) ellipse [x radius=0.68, y radius=0.235];
  \draw[lens,rotate around={90:(M3)}]  (M3) ellipse [x radius=0.68, y radius=0.235];
  \draw[lens,rotate around={150:(M5)}] (M5) ellipse [x radius=0.68, y radius=0.235];
  \draw[royalpurple,line width=1.1pt] (w1)--(w4);
  \draw[black!50,dashed,line width=0.9pt] (w1)--(w3);
  \foreach \i/\ang in {1/90,2/30,3/-30,4/-90,5/210,6/150}{
    \node[cut] at (w\i) {};
    \node[clab] at (\ang:2.02) {$w_{\i}$};
  }
  \foreach \a/\b/\va/\vb in {w1/w2/1/1, w3/w4/1/1, w5/w6/1/1}{
    \node[num] at ($($(\a)!0.3!(\b)$)!0.14cm!90:(\b)$) {$\va$};
    \node[num] at ($($(\b)!0.3!(\a)$)!-0.14cm!90:(\a)$) {$\vb$};
  }
  \foreach \a/\b in {w6/w1, w2/w3, w4/w5}{
    \node[num] at ($($(\a)!0.24!(\b)$)!0.34cm!90:(\b)$) {$2$};
    \node[num] at ($($(\b)!0.24!(\a)$)!-0.34cm!90:(\a)$) {$2$};
  }
\end{tikzpicture}
}
\end{center}
\caption{An $S$-node of a cubic graph. Real members (solid) and virtual members (dashed lenses) alternate. The twist across the solid chord $\{w_1,w_4\}$ has arcs of three members and is balanced. Across the dashed chord $\{w_1,w_3\}$, the arcs are even and it is unbalanced.}
\label{fig:snode-anatomy}
\end{figure}

\begin{lemma}[Cubic $S$-nodes]\label{lem:snode-structure}
Let $G$ be a simple $2$-connected cubic graph, and let $\nu$ be an $S$-node of its decomposition, each member \emph{contributing} to each of its two ends the number of $G$-edges it places there.
\begin{enumerate}
\item[\textup{(a)}] Real members contribute $1$ at each end, virtual members contribute $2$, and the two kinds alternate around the cycle.
\item[\textup{(b)}] A Whitney twist across two skeleton vertices of $\nu$ reverses one of the two arcs between them, and is balanced if and only if that arc has an odd number of members.
\item[\textup{(c)}] Every tree edge of the decomposition has an $S$- or $P$-node endpoint, and the Whitney separation it displays is balanced.
\end{enumerate}
\end{lemma}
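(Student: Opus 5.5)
The plan is to convert everything into statements about pole-degrees of sides of $2$-separations, using Lemma~\ref{lem:dichotomy} and Proposition~\ref{prop:realizations}(i).

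\textbf{Part (a).} Let $x,y$ be the ends of a virtual member of $\nu$. That member corresponds to a tree edge, which displays a Whitney separation with cut $\{x,y\}$. One side of this separation is the union of the edges beyond the virtual member. By Lemma~\ref{lem:dichotomy}, its degree at each of $x,y$ is $1$ or $2$.

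Consider a skeleton vertex $w$ of the cycle $\nu$. It has exactly two incident members, and their contributions sum to $\deg_G(w)=3$. Each contribution is at least $1$: a real member contributes exactly $1$, and a virtual member contributes $1$ or $2$. So at each skeleton vertex the two contributions are $\{1,2\}$.

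Now I show that a virtual member contributes the same amount at both of its ends. Suppose a virtual member contributed $2$ at one end and $1$ at the other. Then its neighbor on the ``$1$'' side must contribute $2$ there, so that neighbor is virtual. Propagating around the cycle forces the contributions to alternate between $1$ and $2$ along the cycle, with each virtual member having an asymmetric contribution.

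To rule this out, I use the structure of the decomposition. If a virtual member contributed $1$ at an end, the subgraph beyond it would have a single edge at that pole. That edge would then form, together with the other pole, a $2$-separation refining the node. By canonicity of the SPQR tree, this would mean the member should have been absorbed into the $S$-node as a real edge adjacent to a smaller virtual piece, a contradiction. More concretely: the single edge $e=xz$ at pole $x$ makes $\{z,y\}$ a $2$-cut whose side includes $e$, so $e$ is in series with the rest of $\nu$. This contradicts the maximality of $S$-nodes, since no two $S$-nodes are adjacent and series elements merge.

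Hence every virtual member contributes $2$ at both ends. Since two members meeting at a vertex contribute $\{1,2\}$, the neighbors of a virtual member are real, and the neighbors of a real member must contribute $2$, so they are virtual. This gives alternation.

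\textbf{Part (b).} The twist description comes from Proposition~\ref{prop:realizations}(ii). For balance, take an arc with endpoints $x,y$. The arc's degree at $x$ is the contribution of its first member, and at $y$ the contribution of its last member. By (a) the members alternate between contributions $1$ and $2$. The first and last members have the same type exactly when the arc has an odd number of members. So by Definition~\ref{def:balanced} the twist is balanced if and only if the arc is odd. Since $\nu$ has an even number of members by alternation, both arcs have the same parity.

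\textbf{Part (c).} This is the step I expect to be the main obstacle: showing that two $R$-nodes are never adjacent. Suppose a tree edge joins two $R$-nodes, with cut $\{x,y\}$. Each side has pole-degree pattern determined by Lemma~\ref{lem:dichotomy}. The side whose degree at $x$ is $1$ has a single edge $xz$ at $x$. In a $3$-connected skeleton, the vertex $x$ would then have only that edge plus the virtual edge, so degree $2$ in the skeleton. This is impossible in a $3$-connected $R$-node, whose vertices have degree at least $3$. So every tree edge has an $S$- or $P$-node endpoint.

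It remains to show the displayed separation is balanced. If the $S$-endpoint case holds, the virtual member contributes $2$ at both ends by (a), so the side beyond it has pole-degrees $(2,2)$ and the separation is balanced. If the endpoint is a $P$-node, note that a cubic $P$-node skeleton has exactly three members, since two vertices of degree $3$ carry at most three parallel classes. At most one of these members is real, by simplicity. Each member's side has degrees $\{1,2\}$ at each pole, summing to $3$ across the three members. Hence exactly one member contributes $1$ at each pole. I will show this is the same member at both poles, using the same ``single edge at a pole'' argument: a side with degrees $(1,2)$ contains an edge $xz$ that should have been a series element, again contradicting the canonical form. Once the pole-degrees of every member are equal, every displayed separation is balanced.
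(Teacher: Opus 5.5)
Your parts (a) and (b), and the first half of (c) (ruling out two adjacent $R$-nodes by a skeleton-degree count, and balance at an $S$-endpoint via (a)), follow the paper's proof essentially step for step. The gap is in the $P$-node case of (c).

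You correctly record that the $P$-node has exactly three members, each contributing $1$ or $2$ at each pole, with total $3$ at each pole. From this you conclude that ``exactly one member contributes $1$ at each pole.'' That is an arithmetic error. Three integers $\ge 1$ summing to $3$ are all equal to $1$, since a single contribution of $2$ would already force the pole degree to be at least $4$. The correct conclusion is that every member of the $P$-node has pole-degrees $(1,1)$. The side beyond the tree edge is a single member, so it has pole-degrees $(1,1)$ and the separation is balanced. This is exactly how the paper finishes.

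The remedy you propose for your false intermediate claim would also fail. You plan to rule out a $(1,2)$ member by reusing the ``single edge at a pole'' series argument from (a). That argument gets its contradiction from the fact that the neighbor of an $S$-node across a virtual member is not an $S$-node. A $P$-node, by contrast, may be adjacent to an $S$-node. In a cubic graph every virtual member of a $P$-node in fact leads to an $S$-node: it contributes $1$ at each pole, so the neighboring skeleton would have a vertex of degree $2$ if it were an $R$-node. So a side meeting a pole in a single edge is the normal situation there, not a violation of canonicity.

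Replace the last three sentences of your (c) by the counting argument above and the proof is complete. Your propagation remark in (a), that one asymmetric virtual member forces asymmetry all around the cycle, is unjustified but also unused, so it can simply be deleted.
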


\begin{proof}
(a) Each member contains a path between its two ends. A real edge is such a path, and each side of a Whitney separation contains one between its poles. Hence, every contribution is at least $1$, and since the contributions at a skeleton vertex sum to its degree $3$, each is $1$ or $2$. A real member contributes $1$ at each end. Suppose a virtual member $m$, standing for the $2$-pole graph $D$ with poles $w,w'$, contributed only $1$ at $w$, through the single edge $wq$ of $D$ at $w$. Then $q\neq w'$ (otherwise $D-wq$ would meet the rest of $G$ at $w'$ alone, a cut vertex), and every $w$--$w'$ path of $D$ passes through $q$, so $D$ is the series composition of $wq$ with a $2$-pole graph on $q,w'$ (Figure~\ref{fig:series-step}). In the reduced SPQR decomposition, such a nontrivial series composition would be represented by an $S$-node adjacent to the node containing $m$, unless it has already been merged into that node. Since the decomposition is reduced, adjacent $S$-nodes have been merged. Thus, this series structure must already lie on the cycle $\nu$ itself, meaning $q$ is a skeleton vertex of $\nu$ and $wq$ is a member of $\nu$, contradicting that $m$ is one member. Hence, virtual members contribute $2$ at each end, so each skeleton vertex meets one member of each kind. 

\begin{figure}[H]
\begin{center}
\resizebox{0.6\textwidth}{!}{
\begin{tikzpicture}[line cap=round, line join=round, >=Latex,
  iv/.style={circle,draw=black!55,fill=white,inner sep=1.4pt,line width=0.6pt},
  cut/.style={circle,fill=black,inner sep=1.9pt},
  ge/.style={line width=1.0pt},
  lens/.style={dashed,black!55,line width=0.8pt},
  blob/.style={fill=black!8,draw=black!45,line width=0.7pt},
  rest/.style={black!45,dotted,line width=0.9pt},
  num/.style={font=\scriptsize},
  arr/.style={-{Stealth[length=2.8mm]},line width=1.0pt,black!60},
  clab/.style={font=\footnotesize}]

\begin{scope}[shift={(0,0)}]
  \coordinate (w) at (0,0); \coordinate (q) at (0.9,0); \coordinate (wp) at (2.8,0);
  \draw[rest] (-0.55,0.12)--(w); \draw[rest] (wp)--(3.35,0.12);
  \draw[lens] (1.4,0) ellipse [x radius=1.4, y radius=0.52];
  \draw[blob] (1.85,0) ellipse [x radius=0.95, y radius=0.3];
  \draw[ge] (w)--(q);
  \node[iv] at (q) {};
  \node[cut] at (w) {}; \node[cut] at (wp) {};
  \node[num] at (0.45,0.2) {$1$};
  \node[clab,below=2pt] at (w) {$w$};
  \node[clab] at (0.9,0.3) {$q$};
  \node[clab,below=2pt] at (wp) {$w'$};
  \node[clab,text=red!70!black] at (1.4,-0.85) {never occurs};
\end{scope}
\draw[arr] (3.75,0)--(4.55,0);

\begin{scope}[shift={(5.1,0)}]
  \coordinate (w) at (0,0); \coordinate (q) at (1.0,0); \coordinate (wp) at (2.9,0);
  \draw[rest] (-0.55,0.12)--(w); \draw[rest] (wp)--(3.45,0.12);
  \draw[ge] (w)--(q);
  \draw[lens,fill=black!8] (1.95,0) ellipse [x radius=0.95, y radius=0.3];
  \node[cut] at (w) {}; \node[cut] at (q) {}; \node[cut] at (wp) {};
  \node[clab,below=2pt] at (w) {$w$}; \node[clab,below=2pt] at (q) {$q$};
  \node[clab,below=2pt] at (wp) {$w'$};
  \node[clab] at (1.45,-0.85) {the canonical form};
\end{scope}
\end{tikzpicture}
}
\end{center}
\caption{The side of a virtual member meeting $w$ through a single edge $wq$ (left) would be a series composition, which the canonical decomposition has already absorbed into the cycle (right).}
\label{fig:series-step}
\end{figure}

(b) The two sides of the twist are the two arcs between the cut vertices, and the twist reverses one of them (Proposition~\ref{prop:realizations}(ii)). The side degree of an arc at a cut vertex is the contribution of its end member there, $1$ if real and $2$ if virtual, so the twist is balanced exactly when the two end members of the arc have the same kind (Definition~\ref{def:balanced}). By part (a) the kinds alternate, so that happens exactly when the arc has an odd number of members (Figure~\ref{fig:snode-anatomy}).

(c) Suppose a tree edge joins two $R$-nodes, and let $u$ be one of its two cut vertices. Each $R$-node graph is $3$-connected, so on each side $u$ meets at least two members besides the virtual edge, and each contributes at least $1$ at $u$ as in (a). Then $\deg_G(u)\ge4$, contradicting cubicity. Hence, one endpoint is an $S$- or a $P$-node. If it is an $S$-node, the side beyond the tree edge is a virtual member of that node, with pole-degrees $(2,2)$ by (a), so the separation is balanced (Definition~\ref{def:balanced}). If it is a $P$-node with $p$ members, then $p\ge3$~\cite{DiBattistaTamassia}, and each member contributes at least $1$ at each pole. The poles have degree $3$, so $p=3$ and every member has pole-degrees $(1,1)$, and the side beyond the tree edge is one member. Thus, the separation is balanced.
\end{proof}

\begin{lemma}[Rearranging an $S$-node]\label{lem:snode-rearrange}
Let $G$ be a simple $2$-connected cubic graph, let $\nu$ be an $S$-node of its decomposition, and let $\alpha$ be assembly data for $\nu$ whose cyclic arrangement alternates between real and virtual members. Then finitely many balanced Whitney twists, each across a pair of skeleton vertices of $\nu$, with every intermediate graph simple, $2$-connected, and cubic, carry $G$ to a realization whose assembly data at $\nu$ is $\alpha$ and whose other assembly data are those of $G$.
\end{lemma}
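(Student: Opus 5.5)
The plan is to translate everything into the combinatorics of the cyclic arrangement at $\nu$. By Lemma~\ref{lem:snode-structure}(a), the current arrangement of $\nu$ in $G$ alternates between real and virtual members, and so does $\alpha$ by hypothesis. Hence $\nu$ has $2k$ members with $k$ real and $k$ virtual. Since $\nu$ is a cycle of at least three members and the kinds alternate, $2k\ge 4$. Fix a labelling of the members of $\nu$, which is common to all realizations by Proposition~\ref{prop:realizations}(i). The assembly data at $\nu$ then consist of two parts:
\begin{itemize}
\item a cyclic sequence of these $2k$ labels;
\item one of two identifications at each tree edge through a virtual member of $\nu$.
\end{itemize}
Proposition~\ref{prop:realizations}(ii) gives the effect of a twist across two skeleton vertices of $\nu$: it reverses one arc, flips the identifications of the virtual members on that arc, and changes nothing else. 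In particular, all assembly data away from $\nu$ are untouched throughout. By Lemma~\ref{lem:snode-structure}(b), such a twist is balanced exactly when the reversed arc has odd length. By Lemma~\ref{lem:chain-maintenance}, every intermediate graph is again simple, $2$-connected and cubic. So it suffices to show that reversals of odd arcs, with their induced flips, carry the current data at $\nu$ to $\alpha$.

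I would use only two kinds of moves.

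\textbf{Arcs of length $3$.} An arc of length $3$ has the form $x\,y\,z$, where $x,z$ are of one kind and $y$ of the other. Its complement has length $2k-3\ge1$, so the two sides are genuine and each contains a path between the cut vertices. Reversing it produces $z\,y\,x$. This transposes two cyclically consecutive members of the same kind, preserves alternation, and incidentally flips the identifications of whichever of $x,y,z$ are virtual.

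\textbf{Arcs of length $1$ consisting of a single virtual member $m$.} The separation across the ends of $m$ is exactly the tree-edge separation displayed at $m$, which is balanced by Lemma~\ref{lem:snode-structure}(c). The twist there flips the identification at $m$ and nothing else. (For a real member this move would be vacuous, and real members carry no identification data.)

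The argument then runs in two phases.

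\textbf{Phase one.} Using length-$3$ reversals, I would sort the real members among the real positions and the virtual members among the virtual positions, in the order prescribed by $\alpha$. Adjacent transpositions within each class generate the full symmetric group on that class, so a bubble-sort sequence suffices. The two classes are sorted independently, because a length-$3$ move fixes the middle member. The cyclic arrangement only needs to agree with $\alpha$ up to rotation and reflection, so I would first fix the position of one real member and then sort the rest.

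\textbf{Phase two.} I would correct every identification that disagrees with $\alpha$ by a length-$1$ twist at the corresponding virtual member. These moves do not disturb the arrangement.

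I expect the main obstacle to be bookkeeping rather than any single hard step. Three points need care:
\begin{itemize}
\item The equivalence used for ``cyclic arrangement'' in the assembly data must be consistent with the identifications. A reflection of the whole cycle also interchanges the ends of every member, so I must check that the normalization in phase one is compatible with how identifications are read off.
\item Each move used must really be a Whitney separation. Both sides of every reversed arc must contain a path between the cut vertices, which is where $2k\ge4$ enters.
\item The flips incurred during phase one are harmless only because phase two can set each identification independently. This relies on the tree-edge clause of Proposition~\ref{prop:realizations}(ii).
\end{itemize}
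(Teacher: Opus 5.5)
Your proposal is correct and follows the paper's proof essentially step for step. Three-member arc reversals transpose adjacent members of the same kind until the cyclic order is $\alpha$'s. One-member reversals at individual virtual members then reset the identifications disturbed along the way. Your extra bookkeeping only makes explicit points the paper leaves implicit: the bound $2k\ge 4$ making both sides of each reversed arc nontrivial, and matching the oriented order so that identifications are compared consistently.
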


\begin{proof}
We first record what a twist at $\nu$ can do. A pair of skeleton vertices of $\nu$ cuts the cycle into two arcs, and a Whitney twist across the pair reverses one of them (Lemma~\ref{lem:snode-structure}(b)). By Proposition~\ref{prop:realizations}(ii), it also flips the identification at the tree edge of every virtual member on that arc, and it changes nothing else: no other node's arrangement, no identification elsewhere. Hence, the twists available at $\nu$ are exactly arc reversals, and we must produce $\alpha$ by reversals alone. A reversal is balanced if and only if its arc has an odd number of members (Lemma~\ref{lem:snode-structure}(b)), and balanced twists keep every intermediate graph simple, $2$-connected, and cubic (Lemma~\ref{lem:chain-maintenance}), so every odd arc is available. The two shortest kinds suffice.

We now produce $\alpha$'s cyclic order, using three-member arcs. Reversing $m,m',m''$ exchanges $m$ and $m''$ and keeps $m'$ in its position. The ends of the arc sit two positions apart, so they have the same kind by the alternation in Lemma~\ref{lem:snode-structure}(a): each such reversal transposes two consecutive real members among the real positions, or two consecutive virtual members among the virtual positions, and moves nothing else in the cyclic order. Consecutive transpositions generate every rearrangement of the real members among their positions, one adjacent swap at a time, and likewise for the virtual members. The current arrangement and $\alpha$ are alternating cyclic orders of the same member set (Proposition~\ref{prop:realizations}(i)), so, overlaying them with kinds matching, they differ by exactly such a pair of rearrangements, and finitely many three-member reversals make the cyclic order $\alpha$'s.

These reversals also flipped identifications along their arcs, which is why we set the identifications last. For each virtual member whose identification now disagrees with $\alpha$'s, reverse the one-member arc consisting of that member alone, the twist across its two ends: the arc is odd, so the twist is balanced, and it fixes the cyclic order and flips exactly that identification (Proposition~\ref{prop:realizations}(ii)). Real members carry no identification. Finitely many such reversals set every identification at $\nu$ to the one $\alpha$ describes, and since every twist above changed only the arrangement of $\nu$ and identifications at tree edges of its members, the other assembly data are still those of $G$.
\end{proof}

\begin{remark}[Tree-edge twists alone do not suffice]\label{rem:arrangement-needed}
Let $A$ and $B$ be the two $2$-pole graphs of Figure~\ref{fig:bead-example}. Each has pole degrees $1$ at $p_1$ and $2$ at $p_2$. Stringing two copies of each in a cycle, identifying the pole $p_1$ of each copy with the pole $p_2$ of the next, puts $1+2=3$ edge ends at every pole, so the result is a simple $2$-connected cubic graph on $20$ vertices. The cyclic orders $A,A,B,B$ and $A,B,A,B$ give graphs $G_1$ and $G_2$ with the same cycle matroid on the same edge set. Its circuits are the cycles inside one copy together with the unions of one pole-to-pole path from each copy, and neither kind depends on the order. The common decomposition has a single $S$-node with eight members (Figure~\ref{fig:bead-arrangements}), and $G_1$, $G_2$ differ exactly in its arrangement. They are not isomorphic. Any isomorphism would carry the decomposition of $G_1$ to that of $G_2$ (Proposition~\ref{prop:realizations}(i)), hence the cyclic order of the piece types to itself up to rotation and reflection, and $A,A,B,B$ and $A,B,A,B$ differ even up to that. Tree-edge twists change no arrangement, so no sequence of them connects $G_1$ to $G_2$. But twisting across the pole shared by the two copies of $A$ and the vertex $d$ of the following copy of $B$, the arc between the cut vertices has three members, so the twist is balanced by Lemma~\ref{lem:snode-structure}(b), and reversing it exchanges its two virtual members and carries $G_1$ to a graph isomorphic to $G_2$ (the pair in Figure~\ref{fig:bead-arrangements}).
\end{remark}
\begin{figure}[H]
\begin{center}
\def\beadfigscale{0.8}
\scalebox{\beadfigscale}{%
\begin{tikzpicture}[line cap=round, line join=round, >=Latex,
  iv/.style={circle,draw=black!55,fill=white,inner sep=1.5pt,line width=0.6pt},
  cut/.style={circle,fill=black,inner sep=2.1pt},
  age/.style={aside,line width=1.05pt},
  bge/.style={bside,line width=1.05pt},
  abead/.style={aside,line width=3.2pt},
  bbead/.style={bside,line width=3.2pt},
  clab/.style={font=\footnotesize},
  ttl/.style={font=\small}]

\begin{scope}[shift={(0,0)}]
  \coordinate (p1) at (0,0); \coordinate (x) at (1.05,0);
  \coordinate (y) at (1.95,0.62); \coordinate (z) at (1.95,-0.62); \coordinate (p2) at (2.85,0);
  \draw[age] (p1)--(x) (x)--(y) (x)--(z) (y)--(z) (y)--(p2) (z)--(p2);
  \node[cut] at (p1) {}; \node[cut] at (p2) {};
  \node[iv] at (x) {}; \node[iv] at (y) {}; \node[iv] at (z) {};
  \node[clab,left=2pt] at (p1) {$p_1$}; \node[clab,right=2pt] at (p2) {$p_2$};
  \node[clab,above=1.5pt] at (x) {$x$}; \node[clab,above=1.5pt] at (y) {$y$}; \node[clab,below=1.5pt] at (z) {$z$};
  \node[ttl,text=aside] at (-0.95,0) {$A$};
\end{scope}

\begin{scope}[shift={(4.7,0)}]
  \coordinate (p1) at (0,0); \coordinate (d) at (0.95,0);
  \coordinate (e) at (1.8,0.62); \coordinate (f) at (1.8,-0.62);
  \coordinate (g) at (2.9,0.62); \coordinate (h) at (2.9,-0.62); \coordinate (p2) at (3.85,0);
  \draw[bge] (p1)--(d) (d)--(e) (d)--(f) (e)--(f) (e)--(g) (f)--(h) (g)--(h) (g)--(p2) (h)--(p2);
  \node[cut] at (p1) {}; \node[cut] at (p2) {};
  \node[iv] at (d) {}; \node[iv] at (e) {}; \node[iv] at (f) {}; \node[iv] at (g) {}; \node[iv] at (h) {};
  \node[clab,left=2pt] at (p1) {$p_1$}; \node[clab,right=2pt] at (p2) {$p_2$};
  \node[clab,above=1.5pt] at (d) {$d$}; \node[clab,above=1.5pt] at (e) {$e$}; \node[clab,below=1.5pt] at (f) {$f$};
  \node[clab,above=1.5pt] at (g) {$g$}; \node[clab,below=1.5pt] at (h) {$h$};
  \node[ttl,text=bside] at (-0.95,0) {$B$};
\end{scope}

\begin{scope}[shift={(1.42,-2.45)}]
  \draw[abead] (129:1.0) arc[start angle=129, end angle=51,  radius=1.0];
  \draw[abead] (39:1.0)  arc[start angle=39,  end angle=-39, radius=1.0];
  \draw[bbead] (-51:1.0) arc[start angle=-51, end angle=-129,radius=1.0];
  \draw[bbead] (-141:1.0)arc[start angle=-141,end angle=-219,radius=1.0];
  \foreach \ang in {45,135,225,315}{ \node[cut] at (\ang:1.0) {}; }
  \node[clab,text=aside] at (90:0.66) {$A$};  \node[clab,text=aside] at (0:0.66) {$A$};
  \node[clab,text=bside] at (-90:0.66) {$B$}; \node[clab,text=bside] at (180:0.66) {$B$};
  \node[ttl] at (0,0) {$G_1$};
\end{scope}

\begin{scope}[shift={(6.62,-2.45)}]
  \draw[abead] (129:1.0) arc[start angle=129, end angle=51,  radius=1.0];
  \draw[bbead] (39:1.0)  arc[start angle=39,  end angle=-39, radius=1.0];
  \draw[abead] (-51:1.0) arc[start angle=-51, end angle=-129,radius=1.0];
  \draw[bbead] (-141:1.0)arc[start angle=-141,end angle=-219,radius=1.0];
  \foreach \ang in {45,135,225,315}{ \node[cut] at (\ang:1.0) {}; }
  \node[clab,text=aside] at (90:0.66) {$A$};  \node[clab,text=bside] at (0:0.66) {$B$};
  \node[clab,text=aside] at (-90:0.66) {$A$}; \node[clab,text=bside] at (180:0.66) {$B$};
  \node[ttl] at (0,0) {$G_2$};
\end{scope}
\end{tikzpicture}}
\end{center}
\caption{The $2$-pole graphs $A$ and $B$ (top) and the graphs $G_1=A,A,B,B$ and $G_2=A,B,A,B$.}
\label{fig:bead-example}
\end{figure}

\begin{figure}[H]
\begin{center}
\def\arrfigscale{0.8}%
\scalebox{\arrfigscale}{%
\begin{tikzpicture}[line cap=round, line join=round, >=Latex,
  iv/.style={circle,draw=black!55,fill=white,inner sep=1.4pt,line width=0.6pt},
  cut/.style={circle,fill=black,inner sep=1.9pt},
  ge/.style={line width=1.0pt},
  lensA/.style={dashed,aside!65,fill=aside!14,line width=0.8pt},
  lensB/.style={dashed,bside!65,fill=bside!14,line width=0.8pt},
  cutmark/.style={royalpurple,line width=1.0pt},
  clab/.style={font=\footnotesize},
  ttl/.style={font=\small},
  twist/.style={-{Stealth[length=3mm]},line width=1.2pt,royalpurple}]
\begin{scope}[shift={(0,0)}]
  \foreach \i/\ang in {1/90,2/45,3/0,4/-45,5/-90,6/-135,7/180,8/135}{
    \coordinate (w\i) at (\ang:1.35);
  }
  \draw[ge] (w2)--(w3); \draw[ge] (w4)--(w5); \draw[ge] (w6)--(w7); \draw[ge] (w8)--(w1);
  \path (w1)--(w2) coordinate[midway] (M1);
  \path (w3)--(w4) coordinate[midway] (M3);
  \path (w5)--(w6) coordinate[midway] (M5);
  \path (w7)--(w8) coordinate[midway] (M7);
  \draw[lensA,rotate around={-22.5:(M1)}]  (M1) ellipse [x radius=0.48, y radius=0.165];
  \draw[lensA,rotate around={-112.5:(M3)}] (M3) ellipse [x radius=0.48, y radius=0.165];
  \draw[lensB,rotate around={157.5:(M5)}] (M5) ellipse [x radius=0.48, y radius=0.165];
  \draw[lensB,rotate around={67.5:(M7)}]  (M7) ellipse [x radius=0.48, y radius=0.165];
  \draw[cutmark] (w3)--(w6);
  \draw[cutmark] (w3) circle (0.16); \draw[cutmark] (w6) circle (0.16);
  \draw[royalpurple,dotted,line width=0.9pt] (0:1.98) arc[start angle=0, end angle=-135, radius=1.98];
  \foreach \i in {1,3,5,7}{\node[cut] at (w\i) {};}
  \foreach \i in {2,4,6,8}{\node[iv] at (w\i) {};}
  \node[clab,aside] at (67.5:1.72) {$A'_1$};
  \node[clab,aside] at (-22.5:1.76) {$A'_2$};
  \node[clab,bside] at (-112.5:1.76) {$B'_1$};
  \node[clab,bside] at (157.5:1.72) {$B'_2$};
  \node[ttl] at (0,-2.5) {$G_1$};
\end{scope}
\draw[twist] (2.65,0)--(3.75,0);
\node[clab,royalpurple] at (3.2,0.32) {$\tau$};
\begin{scope}[shift={(6.35,0)}]
  \foreach \i/\ang in {1/90,2/45,3/0,4/-45,5/-90,6/-135,7/180,8/135}{
    \coordinate (w\i) at (\ang:1.35);
  }
  \draw[ge] (w2)--(w3); \draw[ge] (w4)--(w5); \draw[ge] (w6)--(w7); \draw[ge] (w8)--(w1);
  \path (w1)--(w2) coordinate[midway] (M1);
  \path (w3)--(w4) coordinate[midway] (M3);
  \path (w5)--(w6) coordinate[midway] (M5);
  \path (w7)--(w8) coordinate[midway] (M7);
  \draw[lensA,rotate around={-22.5:(M1)}]  (M1) ellipse [x radius=0.48, y radius=0.165];
  \draw[lensB,rotate around={-112.5:(M3)}] (M3) ellipse [x radius=0.48, y radius=0.165];
  \draw[lensA,rotate around={157.5:(M5)}] (M5) ellipse [x radius=0.48, y radius=0.165];
  \draw[lensB,rotate around={67.5:(M7)}]  (M7) ellipse [x radius=0.48, y radius=0.165];
  \foreach \i in {1,3,5,7}{\node[cut] at (w\i) {};}
  \foreach \i in {2,4,6,8}{\node[iv] at (w\i) {};}
  \node[clab,aside] at (67.5:1.72) {$A'_1$};
  \node[clab,bside] at (-22.5:1.76) {$B'_1$};
  \node[clab,aside] at (-112.5:1.76) {$A'_2$};
  \node[clab,bside] at (157.5:1.72) {$B'_2$};
  \node[ttl] at (0,-2.5) {$G_2$};
\end{scope}
\end{tikzpicture}}
\end{center}
\caption{The $S$-node of Remark~\ref{rem:arrangement-needed} in its two arrangements.}
\label{fig:bead-arrangements}
\end{figure}

\begin{theorem}\label{thm:cubic-whitney}
Let $G$ and $H$ be finite simple $2$-connected cubic graphs with $M(G)\cong M(H)$. Then there is a sequence $G=G_0, G_1,\dots, G_N\cong H$ in which every $G_i$ is a simple $2$-connected cubic graph and each step $G_{i-1}\to G_i$ is a balanced Whitney twist.
\end{theorem}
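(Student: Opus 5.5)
We will work entirely with the assembly data of Proposition~\ref{prop:realizations}. Since $M(G)\cong M(H)\cong M$, part (i) identifies the decomposition trees of $G$ and $H$: the same tree, the same node graphs, and the same assignment of elements to nodes. A realization is then specified by two kinds of data: the cyclic arrangement at each $S$-node, and the identification at each tree edge. By part (ii), realizations with equal data are isomorphic. The plan is to transform the data of $G$ into the data of $H$ one coordinate at a time, using only balanced twists. Lemma~\ref{lem:chain-maintenance} guarantees that every intermediate graph stays simple, $2$-connected, and cubic. The final graph has the same data as $H$ and is therefore isomorphic to $H$.

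\textbf{Step 1: fix the $S$-node arrangements.} Since $H$ is a simple $2$-connected cubic graph, Lemma~\ref{lem:snode-structure}(a) applied to $H$ shows that at every $S$-node $\nu$, the data $\alpha_\nu$ of $H$ alternates real and virtual members. Process the $S$-nodes one at a time. At $\nu$, apply Lemma~\ref{lem:snode-rearrange} to the current graph with target $\alpha_\nu$. This makes the arrangement at $\nu$, and the identifications at tree edges of virtual members of $\nu$, agree with $H$, and it leaves all other data unchanged. Because later steps at other nodes change only their own arrangements and the identifications at their own members' tree edges, we must check the order of processing. Two distinct $S$-nodes are never adjacent, so a tree edge at $\nu$ is never a member edge of another $S$-node, and agreements already achieved are not undone.

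\textbf{Step 2: fix the remaining identifications.} Some tree edges may still disagree with $H$. These are the tree edges without an $S$-node endpoint, which by Lemma~\ref{lem:snode-structure}(c) must have a $P$-node endpoint. For each such tree edge, perform the Whitney twist across its displayed separation. By Lemma~\ref{lem:snode-structure}(c) this separation is balanced. By Proposition~\ref{prop:realizations}(ii) the twist flips exactly that identification and changes nothing else, so all agreements already obtained are preserved.

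\textbf{Main obstacle.} I expect the delicate point to be the bookkeeping that the assembly data of $G$ and $H$ live in the same coordinate system, so that ``agree with $H$'' is meaningful. Identifications at a tree edge are a binary choice relative to the fixed node isomorphisms of Proposition~\ref{prop:realizations}(i). There is, however, a subtlety at $P$-nodes, whose automorphisms could swap poles. I would handle this by fixing once and for all the isomorphisms from part (i), including pole labels. Then ``realizations with the same assembly data are isomorphic'' applies verbatim, and the twist at any tree edge is available because it is balanced.

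Combining the two steps, every step is a balanced twist, every intermediate graph is simple, $2$-connected, and cubic by Lemma~\ref{lem:chain-maintenance}, and the final graph is isomorphic to $H$. This proves Theorem~\ref{thm:cubic-whitney}.
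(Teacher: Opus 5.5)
Your proposal is correct and is essentially the paper's own proof: first fix each $S$-node's arrangement and the identifications at its tree edges via Lemma~\ref{lem:snode-rearrange}, using Lemma~\ref{lem:snode-structure}(a) applied to $H$ to see that the target arrangement alternates. Then flip any remaining identifications by the balanced tree-edge twists of Lemma~\ref{lem:snode-structure}(c), and conclude by Proposition~\ref{prop:realizations}(ii). Your explicit check that $S$-nodes are never adjacent, so fixing one $S$-node cannot undo an earlier one, states a point that the paper's proof leaves implicit.
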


\begin{proof}
Fix the isomorphisms of Proposition~\ref{prop:realizations}, so that $G$ and $H$ are realizations of $M$ specified by assembly data on the common decomposition. We convert the data of $G$ into the data of $H$ in two steps. Every twist used is balanced, so every intermediate graph is simple, $2$-connected, and cubic by Lemma~\ref{lem:chain-maintenance}.

\emph{Step 1: arrangements.} Consider the $S$-nodes one at a time. For an $S$-node $\nu$, the data of $H$ at $\nu$ alternates real and virtual members, because $H$ is cubic (Lemma~\ref{lem:snode-structure}(a)). Hence, by Lemma~\ref{lem:snode-rearrange}, finitely many balanced twists across skeleton vertices of $\nu$ carry the current graph to one whose data at $\nu$ is that of $H$, with all other data unchanged. After all $S$-nodes are treated, the current graph agrees with $H$ at every $S$-node and at every tree edge with an $S$-node endpoint.

\emph{Step 2: identifications.} By Step~1, the tree edges at which the current graph and $H$ still disagree have no $S$-node endpoint. Each displays, in the current graph, a Whitney separation with the $M$-determined edge partition (Proposition~\ref{prop:realizations}(i)), and that separation is balanced (Lemma~\ref{lem:snode-structure}(c)). Flip any disagreeing identifications one at a time. Each flip is the balanced Whitney twist across that separation, and it changes that one data value and nothing else (Proposition~\ref{prop:realizations}(ii)).

After the two steps the current graph has the assembly data of $H$, hence it is isomorphic to $H$ by Proposition~\ref{prop:realizations}(ii), and the concatenated twists are the desired sequence.
\end{proof}

\section{Matroid invariance of the genus polynomial}
\label{sec:matroid-invariance}

\begin{theorem}\label{thm:matroid-invariance}
Let $G$ and $H$ be finite simple $2$-connected cubic graphs. If $M(G)\cong M(H)$, then
\[
\Phi_G(y)=\Phi_H(y)\qquad\text{and}\qquad \Gamma_G(x)=\Gamma_H(x).
\]
\end{theorem}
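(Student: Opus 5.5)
The plan is to combine Theorem~\ref{thm:cubic-whitney} with Theorem~\ref{thm:single-twist} by a chaining argument. Given $G$ and $H$ with $M(G)\cong M(H)$, Theorem~\ref{thm:cubic-whitney} provides a sequence $G=G_0,G_1,\dots,G_N\cong H$. Every $G_i$ in this sequence is a finite simple $2$-connected cubic graph, and each step $G_{i-1}\to G_i$ is a balanced Whitney twist. Applying Theorem~\ref{thm:single-twist} to each step gives $\Phi_{G_{i-1}}=\Phi_{G_i}$. Its hypotheses hold because $G_{i-1}$ is $2$-connected and cubic, and the twist is balanced.

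Chaining these equalities yields $\Phi_G=\Phi_{G_N}$. The face-count polynomial is an isomorphism invariant: a graph isomorphism $G_N\to H$ induces a bijection $\RS(G_N)\to\RS(H)$ that preserves face permutations up to conjugation, and hence preserves $F$. Therefore $\Phi_G=\Phi_H$.

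To pass to genus polynomials, recall that $G$ and $H$ are both cubic with the same number of edges, since a matroid isomorphism is a bijection on ground sets. So $m$ is common, and $n=2m/3$ is also common. By Lemma~\ref{lem:genus-from-faces}, the coefficient of $x^g$ in $\Gamma$ equals the coefficient of $y^{2+n/2-2g}$ in $\Phi$. Hence $\Gamma_G=\Gamma_H$.

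There is no real obstacle here, since all the substantive work sits in Theorems~\ref{thm:single-twist} and~\ref{thm:cubic-whitney}. The only points to check are that every intermediate graph satisfies the hypotheses of Theorem~\ref{thm:single-twist}, which is Lemma~\ref{lem:chain-maintenance}, and that the final step uses isomorphism rather than equality. The case $N=0$, where $G\cong H$ directly, is covered by the same isomorphism-invariance remark.
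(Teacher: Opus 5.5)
Your proposal is correct and is essentially the paper's proof: chain Theorem~\ref{thm:cubic-whitney} with Theorem~\ref{thm:single-twist} step by step, then finish with isomorphism invariance. The one difference is minor. You recover $\Gamma_G=\Gamma_H$ from $\Phi_G=\Phi_H$ by noting that $G$ and $H$ have the same order, whereas the paper takes both equalities directly from Theorem~\ref{thm:single-twist}; these are equivalent.
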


\begin{proof}
By Theorem~\ref{thm:cubic-whitney}, there is a sequence $G=G_0, G_1,\dots, G_N\cong H$ of $2$-connected cubic graphs in which each step $G_{i-1}\to G_i$ is a balanced Whitney twist. By Theorem~\ref{thm:single-twist}, every such step preserves both the genus polynomial and the face-count polynomial, so $\Phi_{G_{i-1}}=\Phi_{G_i}$ and $\Gamma_{G_{i-1}}=\Gamma_{G_i}$ for each $i$. A graph isomorphism preserves $\Phi$ and $\Gamma$ as well, so $\Phi_{G_N}=\Phi_H$ and $\Gamma_{G_N}=\Gamma_H$. Composing the chain of equalities gives $\Phi_G=\Phi_H$ and $\Gamma_G=\Gamma_H$.
\end{proof}

This is Theorem~\ref{thmA}. It holds because a balanced twist induces a face-count-preserving bijection of rotation systems, so the genus polynomial cannot distinguish $2$-isomorphic cubic graphs.

\begin{remark}[Necessity of the cubic hypothesis]\label{rem:cubic-necessary}
The degree restriction in Theorem~\ref{thm:matroid-invariance} is necessary. For graphs that are not cubic, the genus polynomial is in general \emph{not} a cycle-matroid invariant (Figure~\ref{fig:counterexample}). 

\begin{figure}[htbp]
\centering
\resizebox{0.5\textwidth}{!}{%
\begin{tikzpicture}[>=Latex, line cap=round,
   v/.style={circle,draw=black!65,fill=white,minimum size=5.5mm,inner sep=0pt,font=\scriptsize,line width=0.7pt},
   cut/.style={circle,draw=cutred,fill=cutred!15,minimum size=5.5mm,inner sep=0pt,font=\scriptsize,line width=1pt},
   e/.style={black!70,line width=0.9pt}]
 \begin{scope}[shift={(0,0)}]
   \node[v] (n0) at (-1.3,1.3){0}; \node[cut] (n1) at (0,0.2){1}; \node[v] (n2) at (-1.85,-0.15){2};
   \node[v] (n3) at (1.3,1.3){3}; \node[v] (n4) at (1.85,-0.15){4}; \node[cut] (n5) at (0,-1.55){5};
   \draw[e] (n0)--(n1); \draw[e] (n0)--(n2); \draw[e] (n1)--(n2);
   \draw[e] (n1)--(n3); \draw[e] (n1)--(n4); \draw[e] (n3)--(n4);
   \draw[e] (n0)--(n5); \draw[e] (n4)--(n5);
   \node[font=\small] at (0,-2.25){$G$:\ \ $\Gamma_G(x)=4+20x$};
 \end{scope}
 \draw[<->, royalpurple, line width=1pt] (2.7,-0.1) -- (4.0,-0.1);
 \node[royalpurple, font=\scriptsize, align=center] at (3.35,0.5){twist\\ across $\{1,5\}$};
 \begin{scope}[shift={(6.7,0)}]
   \node[v] (m0) at (-1.3,1.3){0}; \node[cut] (m1) at (1.3,1.3){1}; \node[v] (m2) at (-1.9,0.15){2};
   \node[v] (m3) at (1.9,0.15){3}; \node[v] (m4) at (0.9,-1.2){4}; \node[cut] (m5) at (-0.9,-1.2){5};
   \draw[e] (m0)--(m2); \draw[e] (m0)--(m5); \draw[e] (m2)--(m5);
   \draw[e] (m1)--(m3); \draw[e] (m1)--(m4); \draw[e] (m3)--(m4);
   \draw[e] (m0)--(m1); \draw[e] (m4)--(m5);
   \node[font=\small] at (0,-2.25){$H$:\ \ $\Gamma_H(x)=4+12x$};
 \end{scope}
\end{tikzpicture}
}
\caption{Why the cubic hypothesis is needed in Theorem~\ref{thm:matroid-invariance}.}
\label{fig:counterexample}
\end{figure}

\end{remark}

\part*{Part II. Cospectrality does not distinguish the genus polynomial}
\addcontentsline{toc}{part}{Part II. Cospectrality does not distinguish the genus polynomial}

We now demonstrate that cospectral cubic graphs can have distinct genus polynomials even when they agree on a long list of further invariants. This part exhibits the witnesses (Section~\ref{sec:witnesses}), constructs an infinite family of the same kind (Section~\ref{sec:infinite-family}), measures how often the genus polynomial separates cubic graphs (Section~\ref{sec:census}), explains the separation at the level of the expected face count (Sections~\ref{sec:decomposition}--\ref{sec:longcircuits}), and gives a computable genus bound (Section~\ref{sec:bounds}).

\section{Cospectral cubic graphs with distinct genus polynomials}
\label{sec:witnesses}

\subsection{The smallest cospectral cubic examples}

\begin{theorem}\label{thm:n16-smallest-strong-witness}
There are three connected cubic graphs on $16$ vertices with the same values of
\[
n,\ m,\ \ell,\ \gamma,\ \gmax,\ \operatorname{diam},\ \kappa,\ \lambda,\ |\Aut(G)|,
\]
the same cycle counts $c_3,\dots,c_{10}$, and the same adjacency spectrum (hence, being cubic, the same Laplacian spectrum and the same number of spanning trees) but pairwise distinct orientable genus polynomials. No smaller such triple exists.
\end{theorem}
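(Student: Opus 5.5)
The statement has two parts: existence of the triple on $16$ vertices, and minimality. Both are finite verifications, so the proof will be computational, organized so that each step can be checked independently.

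\textbf{Existence.} I would exhibit the three graphs $G_1,G_2,G_3$ of Figure~\ref{fig:witness16-intro} explicitly, by adjacency lists or graph6 strings. The spectral claims are checked by computing the characteristic polynomials of the adjacency matrices exactly over $\mathbb{Z}$ and confirming they coincide. Since the graphs are cubic, $L=3I-A$, so the Laplacian spectra agree as well. The spanning-tree counts then agree by the matrix-tree theorem, as $\tau=\frac1n\prod_{\lambda\neq 0}\mu_\lambda$ is determined by the Laplacian spectrum. The remaining invariants are computed directly:
\begin{itemize}
\item girth, diameter and $|\Aut(G)|$, using a canonical-labelling tool;
\item vertex and edge connectivity, by max-flow;
\item the cycle counts $c_3,\dots,c_{10}$, by exhaustive cycle enumeration.
\end{itemize}
For the genus data, I would enumerate all $2^{16}$ rotation systems of each graph. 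For each one I count the cycles of $\phi_\rho=\sigma_\rho\alpha$ and convert face counts to genera via Lemma~\ref{lem:genus-from-faces}. This yields the three polynomials displayed in Figure~\ref{fig:witness16-intro}; they are visibly pairwise distinct, since already the $x^1$ coefficients $12,20,12$ and the $x^2$ coefficients $2036,2028,2148$ differ. The least and greatest exponents, $\gamma=1$ and $\gmax=4$, are common to all three. As an independent check of $\gmax$, I would use Xuong's theorem.

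\textbf{Minimality.} Here I need that no triple exists on fewer than $16$ vertices. The cubic graphs on $n\le 14$ vertices (there are $509$ connected ones at $n=14$) are few enough to enumerate completely, for instance with \texttt{geng}/\texttt{snarkhunter}. The check proceeds in three stages:
\begin{enumerate}
\item Partition the graphs on each order $n\le14$ by exact characteristic polynomial.
\item Within each cospectral class, refine by the remaining listed invariants.
\item Check whether any refined class contains at least three graphs with pairwise distinct $\Gamma$, using the census polynomials of Section~\ref{sec:census}, which are computed by the same $2^n$ enumeration.
\end{enumerate}
I expect that at $n\le 14$ cospectral connected cubic classes are small (pairs at most), and that no class of size three with all invariants equal exists. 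The proof then reports this outcome.

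\textbf{Main obstacle.} The real difficulty is trustworthiness rather than ideas. Theorem~\ref{thmC} gives me one cross-check: the short-face sum $S(G)$ must agree across cospectral graphs, so equal $S$ on the computed data is a consistency test of the enumeration. I would also validate the genus enumerator against known small cases (the prism of Figure~\ref{fig:genus-surfaces}, and the census of~\cite{CarrDhaliwalMohar}) before trusting the minimality search.
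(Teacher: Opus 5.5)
Your plan is correct and is essentially the paper's proof: the paper also settles both existence and minimality by exhaustive computation over the connected cubic census through $16$ vertices, using full enumeration of the $2^n$ rotation systems together with direct computation of the listed invariants, and records the search and checks in the supplementary data. Your proposal simply spells out the stages of that computation (exact characteristic polynomials, refinement by the remaining invariants, then a check for three pairwise distinct $\Gamma$) more explicitly than the paper does.
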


\begin{proof}
By exhaustive computation over the connected cubic census of orders $8$--$16$ (Section~\ref{sec:census}): the key search and checks are given as item~(D2) of Appendix~\ref{app:data}.
\end{proof}

\begin{table}[htbp]
\centering
\small
\resizebox{0.7\textwidth}{!}{%
\begin{tabular}{@{} l @{\qquad} r@{}r@{}r@{}r @{\qquad} r @{}}
\toprule
\multicolumn{1}{@{}c}{graph6} & \multicolumn{4}{c}{orientable genus polynomial} & \multicolumn{1}{c@{}}{$\Exp[g]$} \\
\midrule
\texttt{O???C@\_UAKECh?DOPO?U?} & $12x$ & ${}+2036x^2$ & ${}+26432x^3$ & ${}+37056x^4$ & $57901/16384\approx 3.5340$ \\
\texttt{O??CA?oID@WAR?E\_Ag?M?} & $20x$ & ${}+2028x^2$ & ${}+26304x^3$ & ${}+37184x^4$ & $57931/16384\approx 3.5358$ \\
\texttt{O?AA@?OaEGKAL?EO@S?F?} & $12x$ & ${}+2148x^2$ & ${}+25872x^3$ & ${}+37504x^4$ & $57985/16384\approx 3.5391$ \\
\bottomrule
\end{tabular}}
\caption{The smallest cospectral examples with distinct genus polynomials among connected cubic graphs. The three graphs agree on the package of Theorem~\ref{thm:n16-smallest-strong-witness} but have distinct orientable genus polynomials.}
\label{tab:n16-smallest-witness}
\end{table}

\subsection{A six-graph family on $22$ vertices}

For the analysis in Section~\ref{sec:longcircuits}, we use a larger family whose differences can be isolated by length.

\begin{theorem}\label{thm:main-witness-class}
There exist six non-isomorphic connected cubic graphs on $22$ vertices agreeing on order, size, girth, diameter, radius, vertex and edge connectivity, automorphism-group order, chromatic number, chromatic index, domination number, independence number, matching number, clique number, circumference, minimum orientable genus, and adjacency spectrum (hence also Laplacian spectrum and spanning-tree count) but whose orientable genus polynomials are pairwise distinct.
\end{theorem}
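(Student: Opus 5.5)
The statement is existential and computational, so the plan is to prove it the same way as Theorem~\ref{thm:n16-smallest-strong-witness}: exhibit the six graphs explicitly, by graph6 strings in a table analogous to Table~\ref{tab:n16-smallest-witness}, and verify each listed property by certified computation. To find them, I would take the census of connected cubic graphs on $22$ vertices from Section~\ref{sec:census}. I would bucket the graphs by characteristic polynomial of the adjacency matrix, computed exactly over $\mathbb{Z}$ so that cospectrality is an equality of integer polynomials rather than a floating-point comparison. Within each cospectral class I would refine by the cheap invariants in the list: girth, diameter, radius, $\kappa$, $\lambda$, $|\Aut|$, and so on. I would then look for a class containing at least six non-isomorphic members, certified pairwise non-isomorphic via canonical labelling, whose census genus polynomials are pairwise distinct.

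Laplacian cospectrality and equal spanning-tree counts need no separate check. For a cubic graph $L=3I-A$, so the Laplacian spectrum is the shifted adjacency spectrum. The spanning-tree count then follows from the matrix-tree theorem as $\frac1n\prod_{\mu\neq0}\mu$.

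The remaining invariants split into easy and hard ones. The easy ones (order, size, girth, diameter, radius, connectivities, clique number, matching number, and $\chi'=3$ by checking for a perfect-matching-free obstruction or exhibiting a 3-edge-colouring) are polynomial-time or trivially certified. The NP-hard ones need certificates in both directions:
\begin{itemize}[label={}, leftmargin=1em]
\item chromatic number: a colouring and an odd-cycle or $K_4$ witness;
\item domination and independence numbers: an optimal set plus an exhaustive or ILP lower-bound proof;
\item circumference: a longest cycle, Hamiltonian if possible, which settles it at $22$;
\item minimum genus: read off as the least exponent of the computed $\Gamma_G$.
\end{itemize}

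Distinctness of the genus polynomials comes from the census computation of all $2^{22}$ rotation systems per graph. I would cross-check it by confirming that each polynomial sums to $2^{22}$ and that its top exponent agrees with Xuong's maximum genus. By Theorem~\ref{thm:matroid-invariance}, distinct polynomials also imply that no two of the six graphs are $2$-isomorphic, which serves as a sanity check.

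The main obstacle is the search itself. We need a cospectral class of size at least six that survives all the invariant filters and still has six distinct genus polynomials. If no single class does, I would relax by dropping the weakest-motivated invariant from the search, then restore it by checking it afterwards. Failing that, I would look for classes produced by Godsil--McKay switching, which yields large cospectral families that tend to share local invariants.
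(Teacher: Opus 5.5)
Your proposal is correct and follows the paper's own route. The paper exhibits the six graphs explicitly (Figure~\ref{fig:n22-six-witnesses}), checks by computation that their adjacency characteristic polynomials and all listed invariants coincide, and obtains pairwise distinct genus polynomials by exhaustively enumerating the $2^{22}$ rotation systems of each graph, with Laplacian cospectrality and spanning-tree counts following from $L=3I-A$ as you say; your census search is simply how such witnesses are found, and the only slip is that $\chi'=3$ is certified by exhibiting a $3$-edge-colouring alone (a missing perfect matching would instead certify $\chi'=4$).
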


\begin{proof}
The invariants listed in the statement and the adjacency characteristic polynomial were computed and found equal across the six graphs, and the six orientable genus polynomials, obtained by exhaustive enumeration of the $2^{22}$ rotation systems of each graph (Appendix~\ref{app:data}), are pairwise distinct (Figure~\ref{fig:n22-six-witnesses}). 
\end{proof}

There also exist graphs on as few as 12 vertices with distinct spectra and equal genus polynomials, which we exhibit in Example~\ref{ex:converse} (Section~\ref{sec:synthesis}). Together these exhibit the two directions of Theorems~\ref{thmB} and~\ref{thmB}$'$. This raises a natural question: what embedding structure differs among graphs that agree on all of the invariants displayed above? 

\tikzset{
  wv/.style={circle,draw=black!55,fill=white,inner sep=0pt,minimum size=3pt,line width=0.3pt},
  wskel/.style={black!50,line width=0.5pt,line cap=round},
}
\begin{figure}[htbp]
\centering

\resizebox{0.8\textwidth}{!}{%
\begin{minipage}{\textwidth}
\centering

\begin{subfigure}[t]{0.32\linewidth}
\centering
\begin{tikzpicture}
  \foreach \i/\x/\y in {0/-1.013/0.84, 1/-0.576/1.223, 2/-1.37/0.241, 3/-0.85/0.065, 4/-0.42/-1.3, 5/1.095/-0.485, 6/1.552/-0.738, 7/2.1/0.01, 8/1.752/0.327, 9/1.353/-0.031, 10/0.768/0.64, 11/1.507/0.905, 12/0.551/-1.121, 13/0.151/-0.842, 14/0.135/0.22, 15/-1.417/-0.913, 16/0.492/1.229, 17/-0.531/-0.249, 18/-0.023/-0.443, 19/-2.002/-0.097, 20/-1.612/-0.204, 21/-1.644/0.722} {\coordinate (n\i) at (\x,\y);}
  \foreach \a/\b in {0/1, 0/2, 0/3, 1/16, 1/21, 2/17, 2/19, 3/18, 3/20, 4/12, 4/13, 4/15, 5/8, 5/9, 5/13, 6/7, 6/9, 6/12, 7/8, 7/11, 8/11, 9/10, 10/14, 10/16, 11/16, 12/18, 13/17, 14/17, 14/18, 15/19, 15/20, 19/21, 20/21} {\draw[wskel] (n\a)--(n\b);}
  \foreach \i in {0,...,21} {\node[wv] at (n\i) {};}
\end{tikzpicture}

\caption*{%
  \resizebox{0.96\linewidth}{!}{%
    $\begin{gathered}
      \Gamma=816x^{2}+62224x^{3}+916416x^{4}\\
      {}+2594304x^{5}+620544x^{6}
    \end{gathered}$%
  }%
}
\end{subfigure}
\hfill
\begin{subfigure}[t]{0.32\linewidth}
\centering
\begin{tikzpicture}
  \foreach \i/\x/\y in {0/-1.182/-1.105, 1/-0.018/-1.217, 2/-1.003/-0.597, 3/-2.001/-0.376, 4/0.796/1.3, 5/-0.395/1.255, 6/1.263/0.759, 7/0.98/0.261, 8/1.375/-0.233, 9/1.743/0.79, 10/1.648/0.106, 11/1.936/-0.195, 12/0.196/-0.211, 13/0.552/-0.731, 14/-1.256/0.998, 15/-1.046/0.504, 16/0.38/-1.204, 17/-0.442/-0.385, 18/-0.706/0.177, 19/1.191/-0.752, 20/-1.911/0.258, 21/-2.1/0.599} {\coordinate (n\i) at (\x,\y);}
  \foreach \a/\b in {0/1, 0/2, 0/3, 1/16, 1/19, 2/17, 2/18, 3/20, 3/21, 4/5, 4/6, 4/9, 5/14, 5/15, 6/7, 6/8, 7/10, 7/12, 8/11, 8/13, 9/10, 9/11, 10/19, 11/19, 12/16, 12/18, 13/16, 13/17, 14/18, 14/21, 15/17, 15/20, 20/21} {\draw[wskel] (n\a)--(n\b);}
  \foreach \i in {0,...,21} {\node[wv] at (n\i) {};}
\end{tikzpicture}

\caption*{%
  \resizebox{0.96\linewidth}{!}{%
    $\begin{gathered}
      \Gamma=736x^{2}+60720x^{3}+921520x^{4}\\
      {}+2607168x^{5}+604160x^{6}
    \end{gathered}$%
  }%
}
\end{subfigure}
\hfill
\begin{subfigure}[t]{0.32\linewidth}
\centering
\begin{tikzpicture}
  \foreach \i/\x/\y in {0/-1.416/0.501, 1/-0.702/-0.198, 2/-0.819/1.09, 3/-2.067/-0.143, 4/0.886/-0.746, 5/-0.144/-0.676, 6/0.96/0.394, 7/1.766/-0.856, 8/2.044/0.002, 9/2.1/-0.573, 10/1.33/0.594, 11/1.283/-0.27, 12/0.717/1.255, 13/0.57/0.272, 14/-0.772/0.354, 15/-0.855/-0.842, 16/0.501/-0.123, 17/-0.322/0.211, 18/-0.214/1.3, 19/-1.74/-0.999, 20/-1.641/0.058, 21/-1.462/-0.605} {\coordinate (n\i) at (\x,\y);}
  \foreach \a/\b in {0/1, 0/2, 0/3, 1/16, 1/21, 2/17, 2/18, 3/19, 3/20, 4/5, 4/6, 4/7, 5/14, 5/15, 6/12, 6/13, 7/8, 7/9, 8/9, 8/10, 9/11, 10/12, 10/16, 11/13, 11/16, 12/18, 13/17, 14/18, 14/20, 15/17, 15/19, 19/21, 20/21} {\draw[wskel] (n\a)--(n\b);}
  \foreach \i in {0,...,21} {\node[wv] at (n\i) {};}
\end{tikzpicture}

\caption*{%
  \resizebox{0.96\linewidth}{!}{%
    $\begin{gathered}
      \Gamma=528x^{2}+58032x^{3}+935744x^{4}\\
      {}+2610176x^{5}+589824x^{6}
    \end{gathered}$%
  }%
}
\end{subfigure}

\par\vspace{0.6em}

\begin{subfigure}[t]{0.32\linewidth}
\centering
\begin{tikzpicture}
  \foreach \i/\x/\y in {0/2.069/0.461, 1/1.324/0.978, 2/2.1/0.034, 3/1.908/-0.543, 4/0.683/-0.392, 5/-1.352/1.026, 6/-1.443/0.546, 7/0.186/-1.3, 8/-0.811/0.279, 9/-1.098/-0.685, 10/-0.697/-0.561, 11/-0.914/-1.256, 12/-1.995/0.378, 13/-1.719/-0.449, 14/-1.541/-0.043, 15/0.29/-0.642, 16/0.076/0.177, 17/-0.63/1.239, 18/0.558/1.192, 19/0.697/0.3, 20/1.031/-1.119, 21/1.278/0.378} {\coordinate (n\i) at (\x,\y);}
  \foreach \a/\b in {0/1, 0/2, 0/3, 1/18, 1/19, 2/3, 2/21, 3/20, 4/7, 4/16, 4/21, 5/8, 5/12, 5/17, 6/13, 6/14, 6/17, 7/11, 7/20, 8/9, 8/16, 9/10, 9/11, 10/14, 10/15, 11/13, 12/13, 12/14, 15/19, 15/20, 16/19, 17/18, 18/21} {\draw[wskel] (n\a)--(n\b);}
  \foreach \i in {0,...,21} {\node[wv] at (n\i) {};}
\end{tikzpicture}

\caption*{%
  \resizebox{0.96\linewidth}{!}{%
    $\begin{gathered}
      \Gamma=704x^{2}+57792x^{3}+934272x^{4}\\
      {}+2607616x^{5}+593920x^{6}
    \end{gathered}$%
  }%
}
\end{subfigure}
\hfill
\begin{subfigure}[t]{0.32\linewidth}
\centering
\begin{tikzpicture}
  \foreach \i/\x/\y in {0/1.294/0.935, 1/0.648/1.207, 2/0.701/0.265, 3/1.962/0.376, 4/-0.887/-1.149, 5/0.194/-1.3, 6/-1.579/-0.42, 7/-1.292/-0.609, 8/-2.1/0.37, 9/-1.822/0.174, 10/-0.713/0.546, 11/-1.544/0.95, 12/-0.843/-0.027, 13/-0.536/-0.343, 14/0.775/-0.645, 15/1.024/-1.041, 16/-0.512/1.253, 17/0.36/-0.534, 18/0.103/-0.1, 19/1.778/-0.345, 20/1.528/-0.089, 21/1.46/0.527} {\coordinate (n\i) at (\x,\y);}
  \foreach \a/\b in {0/1, 0/2, 0/3, 1/16, 1/21, 2/17, 2/18, 3/19, 3/20, 4/5, 4/6, 4/7, 5/14, 5/15, 6/8, 6/12, 7/9, 7/13, 8/9, 8/11, 9/11, 10/12, 10/13, 10/16, 11/16, 12/18, 13/17, 14/18, 14/20, 15/17, 15/19, 19/21, 20/21} {\draw[wskel] (n\a)--(n\b);}
  \foreach \i in {0,...,21} {\node[wv] at (n\i) {};}
\end{tikzpicture}

\caption*{%
  \resizebox{0.96\linewidth}{!}{%
    $\begin{gathered}
      \Gamma=688x^{2}+58384x^{3}+932672x^{4}\\
      {}+2604544x^{5}+598016x^{6}
    \end{gathered}$%
  }%
}
\end{subfigure}
\hfill
\begin{subfigure}[t]{0.32\linewidth}
\centering
\begin{tikzpicture}
  \foreach \i/\x/\y in {0/-0.836/1.104, 1/0.24/1.3, 2/-1.126/0.436, 3/-1.599/0.617, 4/-0.492/-1.168, 5/0.552/-1.177, 6/1.034/-0.871, 7/1.387/-0.237, 8/0.718/-0.216, 9/1.596/-0.674, 10/1.46/0.154, 11/1.937/0.14, 12/-0.009/0.366, 13/0.594/0.254, 14/-0.699/-0.603, 15/-1.518/-0.894, 16/0.457/0.917, 17/-0.339/-0.032, 18/-0.916/0.11, 19/1.337/0.871, 20/-1.679/-0.289, 21/-2.1/-0.107} {\coordinate (n\i) at (\x,\y);}
  \foreach \a/\b in {0/1, 0/2, 0/3, 1/16, 1/19, 2/17, 2/20, 3/18, 3/21, 4/5, 4/14, 4/15, 5/6, 5/9, 6/7, 6/8, 7/11, 7/13, 8/10, 8/12, 9/10, 9/11, 10/19, 11/19, 12/16, 12/18, 13/16, 13/17, 14/17, 14/18, 15/20, 15/21, 20/21} {\draw[wskel] (n\a)--(n\b);}
  \foreach \i in {0,...,21} {\node[wv] at (n\i) {};}
\end{tikzpicture}

\caption*{%
  \resizebox{0.96\linewidth}{!}{%
    $\begin{gathered}
      \Gamma=752x^{2}+60960x^{3}+921008x^{4}\\
      {}+2603328x^{5}+608256x^{6}
    \end{gathered}$%
  }%
}
\end{subfigure}

\end{minipage}%
}

\caption{The six connected cubic graphs on $22$ vertices of
Theorem~\ref{thm:main-witness-class}, labeled $W_1,\dots,W_6$ left to right and
top to bottom, with their orientable genus polynomials.}
\label{fig:n22-six-witnesses}
\end{figure}
\section{Godsil--McKay switching and an infinite family}
\label{sec:infinite-family}

The witnesses of Section~\ref{sec:witnesses} were located by search, which leaves open whether such pairs are only small-order examples. In each family exhibited above, iterated \emph{Godsil--McKay switches} connect any two members (Proposition~\ref{prop:witnesses-are-gm}). We use this to construct an explicit infinite family of cospectral cubic pairs whose orientable genus polynomials differ, and differ already at their minimum genus.

\subsection{Godsil--McKay switching}

\begin{definition}\label{def:gm-switch}
Let $X$ be a graph and $C\subseteq V(X)$ with $|C|$ even, such that the subgraph of $X$ induced on $C$ is regular and every vertex $v\notin C$ has $0$, $|C|/2$, or $|C|$ neighbors in $C$. The \emph{Godsil--McKay switch} $\operatorname{switch}_C(X)$ is obtained by replacing, for each $v\notin C$ with exactly $|C|/2$ neighbors in $C$, the edges between $v$ and $C$ by the complementary set, leaving all other edges unchanged.
\end{definition}

\begin{proposition}[Godsil--McKay~{\cite[Theorem~2.2]{GodsilMcKay}}]\label{prop:gm-cospectral}
$\operatorname{switch}_C(X)$ is adjacency-cospectral with $X$.
\end{proposition}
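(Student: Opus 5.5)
The plan is to prove the Godsil--McKay statement by exhibiting an orthogonal matrix $Q$ with $Q^{\mathsf T}A(X)Q=A(\operatorname{switch}_C(X))$. Similarity by an orthogonal matrix preserves the characteristic polynomial, which gives cospectrality. Order the vertices so that $C=\{1,\dots,c\}$ comes first, with $c=|C|$, and write
\[
A(X)=\begin{pmatrix} A_C & N\\ N^{\mathsf T} & A_0\end{pmatrix},
\]
where $A_C$ is the adjacency matrix of the induced subgraph on $C$ and the columns of $N$ record, for each $v\notin C$, its neighbors in $C$. Let $J$ be the $c\times c$ all-ones matrix, and set $R=\tfrac{2}{c}J-I_c$ and $Q=R\oplus I$.

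\textbf{Step 1: $R$ is an orthogonal involution.} $R$ is symmetric, and since $J^2=cJ$ we get $R^2=\tfrac{4}{c^2}cJ-\tfrac4cJ+I=I$. Hence $Q^{\mathsf T}=Q=Q^{-1}$.

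\textbf{Step 2: the $C$-block is fixed.} We need $RA_CR=A_C$. Because the induced subgraph on $C$ is regular of some degree $k$, we have $A_CJ=JA_C=kJ$. Therefore $R$ commutes with $A_C$, and $RA_CR=A_CR^2=A_C$.

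\textbf{Step 3: the columns of $N$ switch correctly.} Let $x\in\{0,1\}^c$ be a column of $N$. There are three cases, according to how many neighbors $v$ has in $C$.
\begin{itemize}
\item If $x=0$, then $Rx=0$.
\item If $x=\mathbf 1$, then $Rx=\tfrac2c\,c\,\mathbf 1-\mathbf 1=\mathbf 1$.
\item If $x$ has weight $c/2$, then $Jx=\tfrac c2\mathbf 1$, so $Rx=\mathbf 1-x$, which is exactly the complementary neighbor set.
\end{itemize}
So $RN$ is the off-diagonal block of $A(\operatorname{switch}_C(X))$. The block $A_0$ is untouched, because $Q$ is the identity outside $C$ and the switch leaves edges inside $V\setminus C$ alone. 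Multiplying out $QA(X)Q$ block by block now gives $A(\operatorname{switch}_C(X))$.

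The only delicate point is Step 2, which is where regularity of the induced subgraph on $C$ is used. The hypothesis that $|C|$ is even is what makes the weight $c/2$ an integer, so that Step 3 is meaningful. Neither step presents a real obstacle. The result is classical, and citing \cite{GodsilMcKay} would also suffice.
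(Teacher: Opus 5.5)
Your proof is correct. Conjugating $A(X)$ by the symmetric orthogonal involution $Q=(\tfrac{2}{c}J-I)\oplus I$ fixes the regular block on $C$ and complements exactly the columns of weight $c/2$, which is Godsil and McKay's own argument; the paper gives no proof of this proposition beyond citing their Theorem~2.2, so you have supplied the standard argument behind that citation.
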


As we have seen, cospectral cubic graphs are automatically Laplacian-cospectral as well. Thus, Godsil--McKay switching between cubic graphs preserves both spectra in the invariant package of Section~\ref{sec:witnesses}. 
\begin{proposition}\label{prop:witnesses-are-gm}
Within each family of Theorems~\ref{thm:n16-smallest-strong-witness} and~\ref{thm:main-witness-class}, any two graphs are related by a chain of Godsil--McKay switches at four-element switching sets, through members of the family. Eight of the eighteen cospectral pairs (two of the three at $n=16$, six of the fifteen at $n=22$) are related by a single switch, and these single switches connect each family, any two members being at most two ($n=16$), respectively four ($n=22$), switches apart. The remaining ten pairs are related by no single switch. 
\end{proposition}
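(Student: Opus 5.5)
This is a finite computational statement, so the plan is a certified finite search over the explicit graphs of Table~\ref{tab:n16-smallest-witness} and Figure~\ref{fig:n22-six-witnesses}, together with a hand-checkable certificate for each claimed switch. For every ordered pair $(X,Y)$ in a family, I would enumerate all four-element vertex sets $C\subseteq V(X)$; there are $\binom{16}{4}=1820$ at $n=16$ and $\binom{22}{4}=7315$ at $n=22$. For each $C$ I check the conditions of Definition~\ref{def:gm-switch}:
\begin{itemize}
\item the subgraph induced on $C$ is regular;
\item every vertex outside $C$ has $0$, $2$, or $4$ neighbours in $C$.
\end{itemize}
For each admissible $C$ I form $\operatorname{switch}_C(X)$ and test it for isomorphism with each member of the family. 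A canonical labelling suffices here, for instance from the graph6 strings after canonicalisation. This yields the \emph{single-switch graph} on the family. Its edges are recorded as explicit triples $(X,C,\psi)$, where $\psi$ is an isomorphism $\operatorname{switch}_C(X)\to Y$, so that each edge can be verified by hand.

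Cubicity is preserved automatically. A vertex with $|C|/2=2$ neighbours in $C$ keeps degree $2$ into $C$ after the switch. Each vertex of $C$ also keeps its degree, because the number of such outside vertices adjacent to it is balanced by the complementation. I would nevertheless check it directly for each switch. Cospectrality then also follows from Proposition~\ref{prop:gm-cospectral}, which gives a consistency check against the equal characteristic polynomials already computed.

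From the single-switch graph I read off the counts in the statement:
\begin{itemize}
\item exactly $2$ of the $3$ pairs at $n=16$ and $6$ of the $15$ pairs at $n=22$ are adjacent;
\item the single-switch graph is connected, which gives the chains, with all intermediate graphs lying in the family;
\item its diameter is $2$ at $n=16$ and $4$ at $n=22$, computed by breadth-first search.
\end{itemize}
Distances can be computed purely within the family, because the statement requires the chains to pass through members of the family.

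The main obstacle is the negative claim that the remaining ten pairs admit \emph{no} single switch at any four-element set. This requires that the enumeration over $C$ be exhaustive in the correct sense: for fixed $X$ we range over all $C$, and we must test both directions $X\to Y$ and $Y\to X$. The switch is an involution with the same $C$, so one direction suffices in principle, but checking both gives an independent cross-check.

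To keep the search small and trustworthy, I would first prune by the local structure forced by cubicity. The induced subgraph on $C$ is $r$-regular with $r\in\{0,1,2,3\}$, and the degree count $4(3-r)=2t+4s$ constrains the number $t$ of half-adjacent outside vertices and the number $s$ of fully adjacent ones. This leaves only a few configurations, such as a $4$-cycle or a perfect matching on $C$. I would then report the number of admissible $C$ in each graph, so that the nonexistence claim rests on an auditable list rather than on an opaque search.
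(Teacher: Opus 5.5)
Your plan is correct and is essentially the paper's own verification: an exhaustive computer search that records, for each of the eight single-switch pairs, a four-element switching set (in the paper's certificates always an independent set) and a labeling realizing the switch, together with a completed negative search for the remaining ten pairs. The only difference is that the paper's negative search also covers switching sets of sizes $2$ and $6$, while you restrict to size $4$; that is all the statement requires, since it is phrased for four-element switching sets.
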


\begin{proof}[Verification]
For each of the eight single-switch pairs, a four-element independent set $C$ meeting Definition~\ref{def:gm-switch}, together with a labeling under which $\operatorname{switch}_C$ carries one graph to the other, is recorded in the supplementary data (Appendix~\ref{app:data}, item~(D3)).
\end{proof}

Proposition~\ref{prop:witnesses-are-gm} also explains why such graphs may differ in genus at all. A Godsil--McKay switch is \emph{not} a Whitney twist, and we see that it may alter the cycle matroid (Remark~\ref{rem:gm-matroid}).

\subsection{The graph $A$ and the families}

Let $A$ be the connected cubic graph on $V(A)=\{0,1,\dots,13\}$ with adjacency lists
\[
\begin{array}{c|c@{\qquad}c|c@{\qquad}c|c@{\qquad}c|c}
v & N_A(v) & v & N_A(v) & v & N_A(v) & v & N_A(v)\\ \hline
0&\{6,8,9\} & 4&\{10,12,13\} & 8&\{0,6,7\} & 12&\{2,4,5\}\\
1&\{7,9,10\} & 5&\{10,12,13\} & 9&\{0,1,3\} & 13&\{3,4,5\}\\
2&\{7,11,12\}& 6&\{0,8,11\} & 10&\{1,4,5\} & &\\
3&\{9,11,13\}& 7&\{1,2,8\} & 11&\{2,3,6\} & &
\end{array}
\]
equivalently, up to a relabeling of the vertices, $A$ has graph6 string \verb|M{CgOH@?O`?@?d?d?|. Put
\[
C=\{1,2,3,4\},\qquad W=\{7,9,10,11,12,13\}.
\]
Each $w\in W$ has exactly two neighbors in $C$, while each of $0,5,6,8$ has none, and $C$ is independent, hence induces a $0$-regular subgraph. So $C$ satisfies Definition~\ref{def:gm-switch}.

\begin{definition}[The families $G_t$ and $H_t$]\label{def:GtHt}
For $t\ge1$, introduce new vertices $a_1,\dots,a_t,c_1,\dots,c_t$. Form $G_t$ from $A$ by deleting the edges $\{0,8\}$ and $\{6,11\}$, inserting the two paths
\[
0\,a_1\,a_2\cdots a_t\,8 \qquad\text{and}\qquad 6\,c_1\,c_2\cdots c_t\,11,
\]
and adding the edges $a_ic_i$ for $1\le i\le t$. Define $H_t=\operatorname{switch}_C(G_t)$.
\end{definition}

The two deleted edges have no endpoint in $C$, so replacing them changes no vertex's set of neighbors in $C$, and the inserted vertices have no neighbors in $C$. Hence $C$ is still independent and satisfies Definition~\ref{def:gm-switch} in $G_t$, and $H_t$ differs from $G_t$ only in the edges between $C$ and $W$. Both graphs have $14+2t$ vertices. Figure~\ref{fig:gm-families} draws the first two pairs $G_1,H_1$ and $G_2,H_2$ in full and shows how $G_t$ and $H_t$ grow with $t$.

\begin{figure}[htbp]
\centering

\newcommand\GMcoords{%
  \coordinate (s1) at (0,3.6);    \coordinate (s2) at (0,2.4);
  \coordinate (s3) at (0,1.2);    \coordinate (s4) at (0,0);
  \coordinate (w7) at (2.3,3.6);  \coordinate (w9) at (2.3,2.88);
  \coordinate (w10) at (2.3,2.16);\coordinate (w11) at (2.3,1.44);
  \coordinate (w12) at (2.3,0.72);\coordinate (w13) at (2.3,0);
  \coordinate (n8) at (4.3,3.95); \coordinate (n0) at (4.95,2.8);
  \coordinate (n6) at (4.8,1.65); \coordinate (n5) at (5.15,-0.2);
  \coordinate (a1) at (5.85,3.25);\coordinate (c1) at (5.85,1.55);
  \coordinate (a2) at (6.95,3.05);\coordinate (c2) at (6.95,1.75);}
\newcommand\GMgray{%
  \draw[ggray] (n0)--(n6) (n0)--(w9) (n0)--(a1) (n6)--(n8) (n6)--(c1)
               (n8)--(w7) (w10)--(n5) (w12)--(n5) (w13)--(n5) (a1)--(c1);}
\newcommand\GMorangeG{%
  \draw[gcw] (w9)--(s1) (w9)--(s3) (s1)--(w7) (s1)--(w10) (w7)--(s2)
             (w10)--(s4) (s2)--(w11) (s2)--(w12) (w11)--(s3) (w12)--(s4)
             (s3)--(w13) (w13)--(s4);}
\newcommand\GMorangeH{%
  \draw[gcw] (w9)--(s2) (w9)--(s4) (s1)--(w11) (s1)--(w12) (s1)--(w13)
             (w7)--(s3) (w7)--(s4) (w10)--(s2) (w10)--(s3) (s2)--(w13)
             (w11)--(s4) (w12)--(s3);}
\newcommand\GMnodes{%
  \node[cv] at (s1) {$1$};  \node[cv] at (s2) {$2$};
  \node[cv] at (s3) {$3$};  \node[cv] at (s4) {$4$};
  \node[wv] at (w7) {$7$};  \node[wv] at (w9) {$9$};
  \node[wv] at (w10) {$10$};\node[wv] at (w11) {$11$};
  \node[wv] at (w12) {$12$};\node[wv] at (w13) {$13$};
  \node[nv] at (n0) {$0$};  \node[nv] at (n5) {$5$};
  \node[nv] at (n6) {$6$};  \node[nv] at (n8) {$8$};
  \node[nv] at (a1) {$a_1$};\node[nv] at (c1) {$c_1$};}
\newcommand\GMrowone{%
  \draw[ggray] (n8) to[bend left=34] (a1);
  \draw[ggray] (w11) to[bend right=34] (c1);}
\newcommand\GMrowtwo{%
  \draw[gnew] (n8) to[bend left=34] (a2);
  \draw[gnew] (w11) to[bend right=34] (c2);
  \draw[gnew] (a1)--(a2) (a2)--(c2) (c1)--(c2);
  \node[nvnew] at (a2) {$a_2$}; \node[nvnew] at (c2) {$c_2$};}
\resizebox{0.75\textwidth}{!}{%
\begin{tikzpicture}[line cap=round, line join=round, >=Latex,
  cv/.style={draw=Ccol,fill=Ccol!18,rectangle,minimum size=4.6mm,inner sep=0pt,font=\scriptsize,line width=0.7pt},
  wv/.style={draw=aside,fill=aside!15,circle,minimum size=4.6mm,inner sep=0pt,font=\scriptsize,line width=0.7pt},
  nv/.style={draw=black!55,fill=white,circle,minimum size=4.6mm,inner sep=0pt,font=\scriptsize,line width=0.7pt},
  nvnew/.style={draw=suggestgreen,fill=suggestgreen!18,circle,minimum size=4.6mm,inner sep=0pt,font=\scriptsize,line width=1.1pt},
  ggray/.style={gray!55,line width=0.8pt},
  gcw/.style={Ccol,line width=1.3pt},
  gnew/.style={suggestgreen,line width=1.7pt},
  gname/.style={font=\small},
  swit/.style={-{Stealth[length=2.6mm]},line width=1.1pt,black!70},
  ext/.style={-{Stealth[length=2.6mm]},line width=1.1pt,suggestgreen!85!black}]
\begin{scope}[shift={(0,5.8)}]
  \node[gname] at (3.0,3.85) {$G_1$\,\ ($\gamma=2$)};
  \begin{scope}[yscale=0.86]
    \GMcoords \GMgray \GMrowone \GMorangeG \GMnodes
  \end{scope}
\end{scope}
\begin{scope}[shift={(8.9,5.8)}]
  \node[gname] at (3.0,3.85) {$H_1$\,\ ($\gamma=1$)};
  \begin{scope}[yscale=0.86]
    \GMcoords \GMgray \GMrowone \GMorangeH \GMnodes
  \end{scope}
\end{scope}
\begin{scope}[shift={(0,0)}]
  \node[gname] at (3.0,3.85) {$G_2$\,\ ($\gamma=2$)};
  \begin{scope}[yscale=0.86]
    \GMcoords \GMgray \GMrowtwo \GMorangeG \GMnodes
  \end{scope}
\end{scope}
\begin{scope}[shift={(8.9,0)}]
  \node[gname] at (3.0,3.85) {$H_2$\,\ ($\gamma=1$)};
  \begin{scope}[yscale=0.86]
    \GMcoords \GMgray \GMrowtwo \GMorangeH \GMnodes
  \end{scope}
\end{scope}
  \draw[swit] (6.6,7.3)--(8.3,7.3);  \node[font=\scriptsize,anchor=south] at (7.45,7.4) {$\operatorname{switch}_C$};
  \draw[swit] (7.3,1.5)--(8.6,1.5);  \node[font=\scriptsize,anchor=south] at (7.95,1.6) {$\operatorname{switch}_C$};
  \draw[ext] (3.0,5.3)--(3.0,4.3);   \node[font=\scriptsize,align=left,anchor=west,text width=3.9cm] at (3.25,4.8) {add $a_2,c_2$: subdivide $\{a_1,8\}$, $\{c_1,11\}$ and join $a_2c_2$ (\textcolor{suggestgreen}{green})};
  \draw[ext] (11.9,5.3)--(11.9,4.3); \node[font=\scriptsize,anchor=west] at (12.15,4.8) {(same two steps)};
\end{tikzpicture}}
\caption{The first two members of the family, drawn in full.}
\label{fig:gm-families}
\end{figure}

\subsection{The infinite family}

\begin{theorem}\label{thm:infinite-family}
For every $t\ge1$ the graphs $G_t$ and $H_t$ of Definition~\ref{def:GtHt} are connected simple cubic graphs on $14+2t$ vertices, are adjacency-cospectral, and satisfy
\[
\gamma(G_t)=2,\qquad \gamma(H_t)=1.
\]
Consequently $\Gamma_{G_t}(x)\ne\Gamma_{H_t}(x)$ for every $t$. The pairs $(G_t,H_t)_{t\ge1}$ form an infinite family of adjacency-cospectral pairs of connected cubic graphs with distinct orientable genus polynomials.
\end{theorem}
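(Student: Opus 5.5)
The plan is to dispose of the structural claims first, then prove the two genus values separately: an explicit embedding for the upper bounds and a face-counting argument for the lower bound on $G_t$.

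\emph{Structural claims.} Cubicity, simplicity and connectivity of $G_t$ are direct checks from Definition~\ref{def:GtHt}:
\begin{itemize}
\item each new vertex $a_i$ has neighbors $a_{i\pm1}$ (or $0$, $8$) and $c_i$, and symmetrically for $c_i$;
\item the vertices $0,8,6,11$ each lose one edge and gain one;
\item connectivity holds because $A$ is connected and each deleted edge is replaced by a path.
\end{itemize}
The remark after Definition~\ref{def:GtHt} shows that $C$ remains a valid switching set in $G_t$. Hence Proposition~\ref{prop:gm-cospectral} gives cospectrality. Since each $w\in W$ keeps exactly two neighbors in $C$ and each $c\in C$ keeps its degree (the switch preserves $|N(c)\cap W|$ summed correctly, which I would check from the explicit edge list), $H_t$ is cubic and simple. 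Connectivity of $H_t$ is checked directly.

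\emph{Upper bounds: $\gamma(H_t)\le1$ and $\gamma(G_t)\le2$.} I would exhibit explicit rotation systems for $H_1$ and $G_1$ with face counts $F=2+n/2-2g$, that is, $9$ faces of genus $1$ for $H_1$ and $7$ faces of genus $2$ for $G_1$. I would then propagate them along $t$. Passing from $t$ to $t+1$ subdivides the edges $\{a_t,8\}$ and $\{c_t,11\}$ and joins the two new vertices. If the chosen embedding of $G_t$ (or $H_t$) has a face containing both darts $(a_t,8)$ and $(c_t,11)$ on its boundary, the new edge $a_{t+1}c_{t+1}$ can be drawn inside that face. This splits it into two faces, raising $F$ by $1$ while $n/2$ rises by $1$, so the genus is unchanged. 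The ladder face between consecutive rungs $a_ic_i$ and $a_{i+1}c_{i+1}$ is a $4$-face, and the new face adjacent to $8$ and $11$ again contains the two needed darts. This makes the induction self-sustaining once the base embedding has a face through both darts, which I would verify for $t=1$.

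\emph{Lower bounds.} We need $\gamma(H_t)\ge1$ (nonplanarity) and $\gamma(G_t)\ge2$. For $H_t$, I would find a $K_{3,3}$-subdivision inside the unchanged core, the part away from the ladder, which persists for all $t$. For $G_t$, I would argue by faces. Suppose $\gamma(G_t)\le1$; then $F\ge n/2=7+t$ faces, while $\sum(\text{face lengths})=2m=3n=42+6t$. I would bound the number of short faces using the girth and the short cycles of $G_t$:
\begin{itemize}
\item the ladder contributes $t-1$ quadrilaterals $a_ia_{i+1}c_{i+1}c_i$;
\item the core has only a few triangles and $4$-cycles (e.g.\ $8\,6\,0$-type triangles and cycles through $4,5,10,12,13$);
\item each edge lies on at most two faces, so pairs of short cycles sharing edges restrict how many can be faces simultaneously.
\end{itemize}
The aim is to show that the average face length forced by $F\ge 7+t$ ($\le 6-6/(7+t)\cdot$const) cannot be met, reducing everything to a finite core check independent of $t$.

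The main obstacle is this lower bound $\gamma(G_t)\ge2$ uniformly in $t$. My fallback is a reduction lemma: if $G_{t+1}$ has an embedding of genus $g$, deleting the rung $a_{t+1}c_{t+1}$ and suppressing the degree-$2$ vertices yields an embedding of $G_t$ of genus at most $g$, since edge deletion does not increase minimum genus and $G_t$ is homeomorphic to $G_{t+1}-a_{t+1}c_{t+1}$. So $\gamma(G_t)$ is nondecreasing in $t$, and with $\gamma(G_1)=2$ (computed exhaustively, or via the census) this gives $\gamma(G_t)\ge2$. Combined with the upper bounds this yields $\gamma(G_t)=2$ and $\gamma(H_t)=1$. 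The same monotonicity gives $\gamma(H_t)\ge\gamma(H_1)=1$, settling the nonplanarity of $H_t$ without a separate Kuratowski argument. Distinct least exponents then give $\Gamma_{G_t}\ne\Gamma_{H_t}$.
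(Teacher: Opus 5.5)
Your route is the paper's. Cospectrality comes from Godsil--McKay switching, and the upper bounds come from base embeddings of $G_1,H_1$ (found by exhaustive computation) propagated by subdividing the two end edges and drawing the new rung as a chord. The lower bounds come from $\gamma(G_t)\le\gamma(G_{t+1})$ and $\gamma(H_t)\le\gamma(H_{t+1})$: delete the last rung and suppress $a_{t+1},c_{t+1}$. The paper phrases this as a minor, deleting $a_{t+1}c_{t+1}$ and contracting $a_ta_{t+1}$, $c_tc_{t+1}$. Your fallback is therefore the actual proof, and the face-counting attack on $\gamma(G_t)\ge2$ can be dropped. It also started from a wrong picture: $0\,6\,8$ is not a triangle of $G_t$, since $\{0,8\}$ is deleted.

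The step that fails as written is the induction hypothesis for the upper bound. Suppose a face $f$ contains $(a_t,8)$ and $(c_t,11)$. After subdividing, $f$ reads cyclically $P\,Q$, with $P=(a_{t+1},8)\cdots(c_t,c_{t+1})$ and $Q=(c_{t+1},11)\cdots(a_t,a_{t+1})$. The chord inserted at the corners of $f$ at $a_{t+1}$ and $c_{t+1}$ yields the faces $P\,(c_{t+1},a_{t+1})$ and $Q\,(a_{t+1},c_{t+1})$. The genus is preserved, but $(a_{t+1},8)$ and $(c_{t+1},11)$ now lie on different faces. So the hypothesis is not reproduced at $t+1$, and no quadrilateral appears.

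The fix is to orient the two darts oppositely relative to the ladder: require a face containing $(8,a_t)$ and $(c_t,11)$, or both reversed. The same computation then leaves a face containing $(8,a_{t+1}),(a_{t+1},c_{t+1}),(c_{t+1},11)$ consecutively, so the hypothesis propagates. The other piece is the quadrilateral $a_{t+1}a_tc_tc_{t+1}$ whenever $f$ contained the arc $8\,a_t\,c_t\,11$. This is what the paper's base lemma records: a genus-$2$ embedding of $G_1$ with a face through $(8,a_1),(a_1,c_1),(c_1,11)$, and a genus-$1$ embedding of $H_1$ with a face through $(11,c_1),(c_1,a_1),(a_1,8)$.

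Also correct the face counts. With $n=16$ you have $F=10-2g$, so genus $1$ and genus $2$ need $8$ and $6$ faces, not $9$ and $7$; odd face counts are impossible here.
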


This is Theorem~\ref{thmD}. We prove it through five lemmas.

\begin{lemma}\label{lem:if-structure}
For every $t\ge1$, $G_t$ and $H_t$ are simple, $2$-connected, and cubic.
\end{lemma}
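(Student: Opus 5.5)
We verify the three properties directly from Definition~\ref{def:GtHt}, treating $G_t$ first and then transferring to $H_t$ via the switch.

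\emph{Cubic and simple.} In $A$ every vertex has degree $3$ by the adjacency table. Deleting $\{0,8\}$ and $\{6,11\}$ lowers the degrees of $0,8,6,11$ to $2$. The path $0\,a_1\cdots a_t\,8$ restores $0$ and $8$ to degree $3$ and gives each $a_i$ degree $2$. Likewise $6\,c_1\cdots c_t\,11$ restores $6$ and $11$ and gives each $c_i$ degree $2$. The rungs $a_ic_i$ then bring every new vertex to degree $3$. For simplicity, all inserted edges involve at least one new vertex, so none coincides with an edge of $A$. The only possible coincidence among new edges is a path edge $a_ia_{i+1}$ versus a rung, which join different vertex types. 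Also $0\ne 8$ and $6\ne11$, so there are no loops. For $H_t$, Definition~\ref{def:gm-switch} replaces, for each $w\in W$, its two edges into $C$ by the complementary two. Each $w$ keeps degree $3$. Each $c\in C$ keeps degree $3$ because the switch preserves the number of $C$--$W$ edges at each vertex of $C$: the bipartite $C$--$W$ graph is replaced by its bipartite complement, and in $G_t$ each vertex of $C$ has three $W$-neighbours out of six. Simplicity is automatic, since we only choose edge sets.

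\emph{$2$-connectivity.} This is the only step needing an argument rather than bookkeeping, and I expect it to be the main (though mild) obstacle. A connected cubic graph is $2$-connected iff it has no bridge, since a cut vertex of a cubic graph forces a bridge. So it suffices to show every edge lies on a cycle. For $G_t$, each rung $a_ic_i$ lies on the cycle
\[
a_i\cdots a_1\,0\,6\,c_1\cdots c_i\,a_i,
\]
using the edge $\{0,6\}$ of $A$. Each path edge lies on a similar cycle through a rung, or through $0\,6$ and the path back via $8$ or $11$. For the edges of $A-\{08,6\,11\}$, the cleanest route is a short explicit check that this graph is connected and bridgeless, by listing a cycle basis covering each edge from the table. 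Adding the ladder cannot create bridges, so $G_t$ is then $2$-edge-connected.

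For $H_t$, the part outside the $C$--$W$ edges is identical, so it suffices to check explicitly, from the switched edge list shown in Figure~\ref{fig:gm-families}, that each of the twelve switched edges lies on a cycle and that the switched graph on $A$'s vertices is connected. This is a finite check independent of $t$, because the ladder only attaches through $0,8,6,11$, which lie outside $C\cup W$ except $11$, and $11$ keeps its non-$C$ edge to $c_t$. I would present the cycles covering the switched edges explicitly, for instance the $4$-cycles inside the $C$--$W$ bipartite complement together with paths through $0,5,6,8$.

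Connectivity for all $t$ follows in the same way, since the ladder is attached at both ends.
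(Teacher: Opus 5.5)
Your degree and simplicity bookkeeping is correct and is essentially the paper's. Your criterion for $2$-connectivity is also sound: a connected cubic graph is $2$-connected iff it is bridgeless. For $G_t$ the finite check you defer does succeed. Every edge of $A$ minus the edges $\{0,8\}$ and $\{6,11\}$ lies on one of the cycles $9\,0\,6\,8\,7\,1\,9$, $2\,11\,3\,13\,4\,12\,2$, $1\,10\,4\,12\,2\,7\,1$, $3\,9\,1\,10\,4\,13\,3$, $5\,10\,4\,12\,5$, $5\,13\,4\,10\,5$. Your ladder cycles cover the remaining edges, with the end edges $\{a_t,8\}$ and $\{c_t,11\}$ closed through the path $8\,7\,2\,11$.

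The gap is in the transfer to $H_t$. You argue that because the edges outside the $C$--$W$ part agree in $G_t$ and $H_t$, it suffices to check the twelve switched edges plus connectivity. That does not follow, because an unswitched edge is shown not to be a bridge by a cycle through it, and those cycles run through $C$--$W$ edges that the switch deletes. Concretely, every cycle through the unswitched edge $\{0,9\}$ must leave $9$ along one of its two $C$-edges. In $G_t$ these are $\{9,1\}$ and $\{9,3\}$, and the switch removes both. The same holds for the unswitched edge at every vertex of $W$: $\{7,8\}$, $\{5,10\}$, $\{5,12\}$, $\{5,13\}$, $\{11,c_t\}$. So your reduction says nothing about these edges, and nothing in it rules out one of them becoming a bridge while all switched edges still lie on cycles.

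The repair is to redo the covering for all nineteen edges of $A'$ minus $\{0,8\}$ and $\{6,11\}$, where $A'=\operatorname{switch}_C(A)$. It works, for example with $9\,0\,6\,8\,7\,4\,9$, $11\,1\,12\,3\,7\,4\,11$, $1\,13\,2\,10\,3\,12\,1$, $2\,9\,4\,7\,3\,10\,2$, $5\,10\,3\,12\,5$, $5\,13\,2\,10\,5$, and the path $8\,7\,4\,11$ for the ladder ends. The paper avoids this bookkeeping entirely. It exhibits one Hamiltonian cycle of $A$ and one of $A'$, each passing through both $\{0,8\}$ and $\{6,11\}$. Replacing those two edges by the inserted paths gives a Hamiltonian cycle of $G_t$ and of $H_t$ for every $t$ at once, which yields connectivity and $2$-connectivity directly.
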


\begin{proof}
Switching replaces each $w\in W$'s two $C$-neighbors by the other two, and each
$c\in C$'s three $W$-neighbors by the complementary three, so
$A':=\operatorname{switch}_C(A)$ is again simple and cubic. The deleted edges $\{0,8\}$
and $\{6,11\}$ have no endpoint in $C$, so $H_t$ is obtained from $A'$ by the
same construction that produces $G_t$ from $A$. In either graph each of
$0,8,6,11$ trades one old edge for one new one, and each inserted vertex gets two
path edges and one rung, so $G_t$ and $H_t$ are simple and cubic. Both base
graphs are Hamiltonian through the two deleted edges:
\[
2\;12\;4\;10\;5\;13\;3\;11\;6\;8\;0\;9\;1\;7\;2 \quad\text{in } A,
\qquad
0\;9\;4\;7\;3\;12\;5\;10\;2\;13\;1\;11\;6\;8\;0 \quad\text{in } A'.
\]
Replacing $\{0,8\}$ and $\{6,11\}$ by the inserted paths turns these into
Hamiltonian cycles of $G_t$ and $H_t$, so both graphs are Hamiltonian and in
particular $2$-connected.
\end{proof}

\begin{lemma}\label{lem:if-cospectral}
For every $t\ge1$, $G_t$ and $H_t$ are adjacency-cospectral.
\end{lemma}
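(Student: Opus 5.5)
The plan is to apply Proposition~\ref{prop:gm-cospectral} directly to $X=G_t$ with the switching set $C=\{1,2,3,4\}$. By Definition~\ref{def:GtHt}, $H_t=\operatorname{switch}_C(G_t)$, so cospectrality follows immediately once $C$ satisfies the hypotheses of Definition~\ref{def:gm-switch} inside $G_t$. Note that the property has to hold in $G_t$ itself; knowing it only in the base graph $A$ is not enough. The proof therefore amounts to transferring the verification from $A$ to $G_t$.

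\begin{enumerate}
\item \textbf{Parity and regularity.} Since $|C|=4$ is even, the parity condition holds. The subgraph of $G_t$ induced on $C$ is obtained from the subgraph of $A$ induced on $C$ with no change. The reason is that the construction deletes only the edges $\{0,8\}$ and $\{6,11\}$, and adds only edges incident to the new vertices $a_i,c_i$; none of these edges has both ends in $C$. So $C$ is still independent, hence $0$-regular.

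\item \textbf{Neighbor counts.} We check that every vertex outside $C$ has $0$, $2$, or $4$ neighbors in $C$.
\begin{itemize}
\item A vertex $w\in W=\{7,9,10,11,12,13\}$ has exactly two neighbors in $C$ in $A$. The only $W$-vertex whose neighborhood changes is $11$: it loses $6$ and gains $c_t$, and neither of these lies in $C$. So each $w\in W$ still has exactly two neighbors in $C$.
\item The vertices $0,5,6,8$ have no neighbors in $C$ in $A$. The changes to their neighborhoods are $0$ trading $8$ for $a_1$, $8$ trading $0$ for $a_t$, and $6$ trading $11$ for $c_1$, all involving vertices outside $C$. So they still have $0$ neighbors in $C$.
\item Each new vertex $a_i$ or $c_i$ is adjacent only to path neighbors in $\{0,8,6,11\}\cup\{a_j,c_j\}$ and to its rung partner. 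None of these lies in $C$, so every new vertex has $0$ neighbors in $C$.
\end{itemize}
This is essentially the remark the paper makes after Definition~\ref{def:GtHt}, now made explicit.

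\item \textbf{Conclusion.} Both hypotheses of Definition~\ref{def:gm-switch} hold in $G_t$. Proposition~\ref{prop:gm-cospectral} then gives that $H_t=\operatorname{switch}_C(G_t)$ is adjacency-cospectral with $G_t$. Because both graphs are cubic by Lemma~\ref{lem:if-structure}, they are Laplacian-cospectral as well.
\end{enumerate}

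There is no real obstacle here; the only care needed is bookkeeping. One must confirm that the endpoints of the two deleted edges, and the vertex $11\in W$ in particular, keep their counts of neighbors in $C$. One must also confirm that no new vertex becomes adjacent to $C$. For $t=1$ this means checking that $a_1$ is adjacent to both $0$ and $8$, and that $c_1$ is adjacent to both $6$ and $11$; the degenerate path case causes no trouble, since these are still vertices outside $C$.
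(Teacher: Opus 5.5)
Your proposal is correct and follows the paper's proof. You check that $C=\{1,2,3,4\}$ still satisfies Definition~\ref{def:gm-switch} inside $G_t$, since neither the deleted edges nor the inserted ones touch $C$, and then apply Proposition~\ref{prop:gm-cospectral} to $H_t=\operatorname{switch}_C(G_t)$; your vertex-by-vertex bookkeeping, including vertex $11$ trading $6$ for $c_t$, just spells out what the paper states in one line.
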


\begin{proof}
The set $C$ satisfies Definition~\ref{def:gm-switch} in $G_t$ just as it does
in $A$. The edge deletions and insertions producing $G_t$ touch no edge at $C$,
and the new vertices $a_i,c_i$ have no neighbors in $C$. Since
$H_t=\operatorname{switch}_C(G_t)$, Proposition~\ref{prop:gm-cospectral} gives
the claim.
\end{proof}

\begin{lemma}[Base case]\label{lem:if-base}
The graphs $G_1$ and $H_1$ have $16$ vertices and orientable genus polynomials
\[
\Gamma_{G_1}(x)=2432\,x^2+29824\,x^3+33280\,x^4,
\]
\[
\Gamma_{H_1}(x)=48\,x+2448\,x^2+28736\,x^3+34304\,x^4.
\]
In particular $\gamma(G_1)=2$ and $\gamma(H_1)=1$. Moreover, $G_1$ admits a genus-$2$ embedding with a face whose boundary contains the arc
\[
(2,7),(7,8),(8,a_1),(a_1,c_1),(c_1,11),(11,2),
\]
and $H_1$ admits a genus-$1$ embedding with a face whose boundary contains
\[
(4,11),(11,c_1),(c_1,a_1),(a_1,8),(8,6).
\]
\end{lemma}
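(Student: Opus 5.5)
The plan is to prove this base case by exhaustive computation, backed by explicit certificates that a reader can check by hand. First I will write down $G_1$ and $H_1$ concretely from Definition~\ref{def:GtHt}. For $G_1$, delete $\{0,8\}$ and $\{6,11\}$ from $A$ and add the edges $0a_1$, $a_18$, $6c_1$, $c_111$, $a_1c_1$. Then $H_1=\operatorname{switch}_C(G_1)$: each $w\in W$ gets the complementary pair of neighbours in $C=\{1,2,3,4\}$. Reading off $A'=\operatorname{switch}_C(A)$ from the proof of Lemma~\ref{lem:if-structure}, $11$ is adjacent to $1$ and $4$ in $H_1$. This is consistent with the dart $(4,11)$ in the claimed arc. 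Both graphs have $16$ vertices and are cubic by Lemma~\ref{lem:if-structure}.

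Next I will enumerate all $2^{16}=65536$ rotation systems of each graph. For each one I compute the face count $F(\rho)$ as the number of cycles of $\phi_\rho=\sigma_\rho\alpha$, then convert to genus by Lemma~\ref{lem:genus-from-faces}, which for $n=16$ gives $g=5-F/2$. Tallying these yields $\Gamma_{G_1}$ and $\Gamma_{H_1}$. As sanity checks:
\begin{itemize}
\item the coefficients must sum to $65536$;
\item every coefficient must be divisible by $2$, because reversing every rotation preserves $F$;
\item the top exponent must match Xuong's maximum genus, which is $4$ for a Hamiltonian cubic graph on $16$ vertices (one or two faces).
\end{itemize}
The minimum genera $\gamma(G_1)=2$ and $\gamma(H_1)=1$ then follow by reading off the least exponent.

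For the upper bounds, the computation will also extract explicit witnesses. I will take a rotation system of $G_1$ with $F=6$ and one of $H_1$ with $F=8$, chosen so that the stated arc lies on a face. To find them, I will restrict the search to rotations satisfying the successor constraints that the arc imposes. In $G_1$ these are $\sigma_7(2)=8$, $\sigma_8(7)=a_1$, $\sigma_{a_1}(8)=c_1$, $\sigma_{c_1}(a_1)=11$ and $\sigma_{11}(c_1)=2$. The constraints for $H_1$ are analogous. Among the constrained systems I then pick one that attains the extremal face count. Listing each rotation system and its faces gives a hand-checkable certificate.

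The main obstacle is the lower bound $\gamma(G_1)\ge 2$, i.e.\ that no rotation system of $G_1$ has $8$ faces. An upper bound only needs a witness, but this lower bound rests on the exhaustive enumeration alone. I would certify it independently by running a second method, either a standard toroidality test or a recount of the genus distribution through a different implementation, and by recording the data in Appendix~\ref{app:data}. A conceptual argument is unnecessary here, because the inductive lemmas that follow will propagate this base case along $t$ using the stated face arcs.
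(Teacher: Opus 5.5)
Your proposal is correct and is essentially the paper's own proof: exhaustive enumeration of all $2^{16}$ rotation systems of $G_1$ and $H_1$, counting the cycles of $\phi_\rho$ and converting to genus by Lemma~\ref{lem:genus-from-faces}, with explicit rotation systems certifying the two arc-carrying embeddings and an independent cross-check recorded in Appendix~\ref{app:data}. One minor caveat concerns your Xuong sanity check. For $n=16$ the face count $F=10-2g$ is always even, so a genus-$4$ embedding has exactly two faces, and although $g_{\max}=4$ does hold for these graphs, it does not follow from Hamiltonicity alone: Hamiltonian cubic graphs need not be upper-embeddable (the truncation of $K_{3,3}$ is a counterexample).
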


\begin{proof}
Exhaustive enumeration of all $2^{16}$ rotation systems of each graph, tracing the face permutation and computing the genus by Lemma~\ref{lem:genus-from-faces}. Explicit rotation systems realizing the two displayed embeddings can be found in Appendix~\ref{app:data}, items~(D2) and~(D4).
\end{proof}

\begin{lemma}[Upper bound]\label{lem:if-upper}
For every $t\ge1$, $\gamma(G_t)\le2$ and $\gamma(H_t)\le1$.
\end{lemma}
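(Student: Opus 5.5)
Induction on $t$, with base case Lemma~\ref{lem:if-base}. The plan is to show that the special embeddings recorded there can be extended from $t$ to $t+1$ without changing the genus. By Lemma~\ref{lem:genus-from-faces}, passing from $t$ to $t+1$ adds two vertices, and $1+\tfrac{n}{4}-\tfrac12F$ stays constant exactly when the face count goes up by one. So the inductive step needs an operation that adds the two vertices $a_{t+1},c_{t+1}$ and gains exactly one face.

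The local operation is as follows. Suppose some face $f$ of an embedding of $G_t$ (resp.\ $H_t$) traverses the rung edge $a_tc_t$ in the direction $(a_t,c_t)$. Its boundary then contains a segment $(x,a_t),(a_t,c_t),(c_t,y)$, where $x,y$ are the path neighbours of $a_t$ and $c_t$ away from $a_{t-1},c_{t-1}$ (for $t=1$, $x=8$ and $y=11$). To reach $t+1$ we subdivide $a_t x$ by $a_{t+1}$, subdivide $c_t y$ by $c_{t+1}$, and join $a_{t+1}c_{t+1}$. The relabeling is harmless: in $G_{t+1}$ the vertex adjacent to $8$ is $a_{t+1}$, so we rename so that the new rung sits next to $8,11$.

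Rotations at old vertices are kept, with the subdivided edge replacing the old edge. At $a_{t+1}$ and $c_{t+1}$ we choose the rotations so that the new edge $a_{t+1}c_{t+1}$ splits $f$ into two faces. Since both endpoints of the new edge lie on the same face $f$, inserting the chord inside $f$ splits it, giving $F+1$ faces, while every other face only gets subdivided edges. The genus is therefore preserved. In addition, one of the two resulting faces is the short quadrilateral-like piece $(a_t,c_t),(c_t,c_{t+1}),(c_{t+1},a_{t+1}),(a_{t+1},a_t)$, and the other is $f$ rerouted through $(x,a_{t+1}),(a_{t+1},c_{t+1}),(c_{t+1},y)$. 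That rerouted face contains the new rung traversed in the same pattern, $x\to a_{t+1}\to c_{t+1}\to y$, so the invariant propagates to $t+1$.

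The arcs recorded in Lemma~\ref{lem:if-base} are exactly this invariant at $t=1$. In $G_1$ the arc is $(8,a_1),(a_1,c_1),(c_1,11)$. In $H_1$ the arc $(11,c_1),(c_1,a_1),(a_1,8)$ is the same situation read in the reverse direction, so the symmetric argument with the roles of $a$ and $c$ exchanged applies. The induction then gives embeddings of genus $2$ for $G_t$ and genus $1$ for $H_t$. Since $\gamma$ is a minimum, this yields $\gamma(G_t)\le 2$ and $\gamma(H_t)\le1$.

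The main obstacle is making the chord-splitting step rigorous in the rotation-system language. I would check it directly on $\phi_\rho=\sigma_\rho\alpha$. One has to confirm that with $\sigma_{a_{t+1}}=(a_t\,c_{t+1}\,x)$ and the matching rotation at $c_{t+1}$ (orientations chosen to agree with the side of $f$), the cycle of $\phi$ through $(x,a_{t+1})$ takes the new rung. It must also be checked that the dart $(a_t,a_{t+1})$ closes the small four-cycle face. Finally, every other dart sequence must map to its old counterpart with the subdivision vertices inserted. This is a finite, local computation of the kind used in Theorem~\ref{thm:single-twist}. The only subtle point is choosing the two rotations consistently, and exactly one of the two choices at each new vertex works.
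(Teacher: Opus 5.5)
Your proposal is correct and is essentially the paper's proof. You induct from the embeddings of Lemma~\ref{lem:if-base}, subdivide $\{a_t,8\}$ and $\{c_t,11\}$, and draw the new rung as a chord across the face carrying the arc. This splits off the quadrilateral $a_{t+1}a_tc_tc_{t+1}$, so $\Delta F=1$ and the genus is unchanged, and the other piece carries the arc again. Your three-dart invariant $(8,a_t),(a_t,c_t),(c_t,11)$ is just a trimmed version of the paper's longer arc. You should state explicitly, as the paper does, that $H_{t+1}$ arises from $H_t$ by the same operations because switching touches no inserted edge.

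There is one orientation slip in your local check. With the paper's convention $\phi_\rho(u,v)=(v,\sigma_v(u))$, the rotations that send the walk from $(x,a_{t+1})$ onto the rung are $\sigma_{a_{t+1}}=(x\,c_{t+1}\,a_t)$ and $\sigma_{c_{t+1}}=(c_t\,a_{t+1}\,y)$, not $(a_t\,c_{t+1}\,x)$. With these choices the quadrilateral closes through $(a_{t+1},a_t)$, and the reverse dart $(a_t,a_{t+1})$ passes straight through to $(a_{t+1},x)$.
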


\begin{proof}
We prove the statement by induction on $t$. Lemma~\ref{lem:if-base} gives a genus-$2$ embedding of $G_1$
with a face whose boundary contains
\[
(2,7),(7,8),(8,a_1),(a_1,c_1),(c_1,11),(11,2).
\]
Suppose $G_t$ has such an embedding, with $a_t,c_t$ in place of $a_1,c_1$.
The graph $G_{t+1}$ arises from $G_t$ by subdividing $\{a_t,8\}$ with $a_{t+1}$,
subdividing $\{c_t,11\}$ with $c_{t+1}$, and adding the rung $a_{t+1}c_{t+1}$.
Subdivision changes no face except by lengthening its boundary walk, so the face
above now runs $\dots,7,8,a_{t+1},a_t,c_t,c_{t+1},11,2,\dots$, and drawing the
rung as a chord across this disk face splits it into two: the quadrilateral
$a_{t+1}a_tc_tc_{t+1}$, and a face again carrying the arc, now through
$a_{t+1},c_{t+1}$. Altogether $\Delta V=2$, $\Delta E=3$, $\Delta F=1$, so the
Euler characteristic, hence the genus, is unchanged, and the induction
continues: $\gamma(G_t)\le2$ for every $t$. Since switching touches no
inserted edge, $H_{t+1}$ is built from $H_t$ by the same three operations, and
the same induction from the genus-$1$ embedding of $H_1$, with the chord drawn
so that the surviving face carries
$(4,11),(11,c_{t+1}),(c_{t+1},a_{t+1}),(a_{t+1},8),(8,6)$, gives
$\gamma(H_t)\le1$.
\end{proof}

\begin{lemma}[Lower bound]\label{lem:if-lower}
For every $t\ge1$, $\gamma(G_t)\ge2$ and $\gamma(H_t)\ge1$.
\end{lemma}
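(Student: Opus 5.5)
The plan is to reduce to the base case $t=1$ using minor-monotonicity of the orientable genus. I would show that $G_1$ is a minor of $G_t$ and $H_1$ is a minor of $H_t$. Then Lemma~\ref{lem:if-base} gives $\gamma(G_t)\ge\gamma(G_1)=2$ and $\gamma(H_t)\ge\gamma(H_1)=1$.

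\emph{Step 1: the minor.} In $G_t$, contract the path edges $a_ia_{i+1}$ and $c_ic_{i+1}$ for $1\le i\le t-1$. The path $0\,a_1\cdots a_t\,8$ collapses to a single vertex $a$ adjacent to $0$ and $8$, and the path $6\,c_1\cdots c_t\,11$ collapses to a vertex $c$ adjacent to $6$ and $11$. The $t$ rungs $a_ic_i$ become $t$ parallel edges between $a$ and $c$; delete all but one of them. None of the contracted edges is a loop, because the contracted edges form two disjoint paths. Nothing else in $A$ is touched, so the result is exactly $G_1$, with $a,c$ playing the roles of $a_1,c_1$. Since $H_t$ is built from $A'=\operatorname{switch}_C(A)$ by the same construction, as noted in the proof of Lemma~\ref{lem:if-structure}, the same contractions and deletions carry $H_t$ to $H_1$.

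\emph{Step 2: monotonicity.} I would record the standard fact that deleting an edge or contracting a non-loop edge never increases the orientable genus. For deletion, restrict a minimum-genus embedding; if the embedding of the smaller graph is not cellular, it still lies on a surface of the same genus, and its minimum genus is at most that. For contraction of a non-loop edge $xy$, shrink $xy$ to a point within the surface, merging the rotations at $x$ and $y$. Deleting the extra parallel edges afterwards is again deletion. This is routine, and I would cite Mohar--Thomassen~\cite{MoharThomassen} for it. The bounds then follow immediately from Lemma~\ref{lem:if-base}.

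\emph{Main obstacle.} The only step needing care is checking that the contracted graph is literally $G_1$ (respectively $H_1$) and not some other graph. Concretely, the end of the $a$-path must be attached to $0$ and $8$, and the end of the $c$-path to $6$ and $11$, exactly as in Definition~\ref{def:GtHt} with $t=1$, and the switch must commute with the construction. Both facts follow because the edges $\{0,8\}$ and $\{6,11\}$ and all inserted vertices are disjoint from $C$.

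Combining this lemma with Lemma~\ref{lem:if-upper} gives $\gamma(G_t)=2$ and $\gamma(H_t)=1$.
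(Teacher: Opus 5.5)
Your proposal is correct and follows the paper's argument. You exhibit $G_1$ (resp.\ $H_1$) as a minor of $G_t$ (resp.\ $H_t$), using that the switch commutes with the construction, and then apply minor-monotonicity of orientable genus together with Lemma~\ref{lem:if-base}. The only cosmetic difference is the order of operations. The paper shows $G_t\preccurlyeq G_{t+1}$ by deleting the rung $a_{t+1}c_{t+1}$ before contracting, so no parallel edges arise, and then iterates. You instead contract all path edges at once and delete the resulting parallel rungs afterwards.
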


\begin{proof}
We show $G_t$ is a minor of $G_{t+1}$. In $G_{t+1}$ delete the last edge $a_{t+1}c_{t+1}$, then contract $a_ta_{t+1}$ and $c_tc_{t+1}$. The contractions restore the edges $\{a_t,8\}$ and $\{c_t,11\}$, and no multi-edge appears because the only would-be duplicate, the edge $a_{t+1}c_{t+1}$, was deleted first. The result is $G_t$. Switching alters none of the inserted edges, so the same operations reduce $H_{t+1}$ to $H_t$. Iterating gives $G_1\preccurlyeq G_t$ and $H_1\preccurlyeq H_t$. Orientable genus is minor-monotone (see, e.g., \cite{MoharThomassen}), so $\gamma(G_t)\ge\gamma(G_1)=2$ and $\gamma(H_t)\ge\gamma(H_1)=1$ by Lemma~\ref{lem:if-base}.
\end{proof}

\begin{proof}[Proof of Theorem~\ref{thm:infinite-family}]
Lemma~\ref{lem:if-structure} gives the structural claims and Lemma~\ref{lem:if-cospectral} cospectrality. Lemmas~\ref{lem:if-upper} and~\ref{lem:if-lower} give $\gamma(G_t)=2$ and $\gamma(H_t)=1$. Since the least exponent of $\Gamma_G$ equals $\gamma(G)$, the polynomial $\Gamma_{G_t}$ has no $x^1$ term while $\Gamma_{H_t}$ does, so $\Gamma_{G_t}\ne\Gamma_{H_t}$ and in particular $G_t\not\cong H_t$. 
\end{proof}

\begin{remark}\label{rem:gm-matroid}
We return to this in Section~\ref{sec:synthesis}. Each $G_t$ and $H_t$ is a finite simple $2$-connected cubic graph (Lemma~\ref{lem:if-structure}), so by Theorem~\ref{thmA} their distinct genus polynomials force $M(G_t)\not\cong M(H_t)$. This shows the Godsil--McKay switch changes the cycle matroid. It is exactly because the switch is \emph{not} a Whitney twist that it is free to change the genus polynomial.
\end{remark}

\section{A census of genus polynomials}
\label{sec:census}

To measure how common the coincidences of Section~\ref{sec:witnesses} are, we
computed the orientable genus polynomial of every connected cubic graph
through $22$ vertices, generated with \texttt{nauty}/\texttt{geng}~\cite{McKayPiperno}.
Brinkmann's \texttt{multi\_genus}~\cite{BrinkmannGenus} previously computed the \emph{minimum} genus of the graphs in this range.

\begin{theorem}\label{thm:census-through-22}
The orientable genus polynomial was computed exactly for all connected cubic graphs on $n\le 22$ vertices. The counts of graphs and of distinct genus polynomials are those of Table~\ref{tab:census}.
\end{theorem}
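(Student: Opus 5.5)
This is a computational theorem, so the proof is a certified exhaustive computation. The plan has three stages: generate the graphs, compute each polynomial exactly, and aggregate.

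\emph{Generation.} We generate every connected cubic graph on $n\le 22$ vertices (even $n$) with \texttt{nauty}/\texttt{geng}, using its regularity and connectivity flags. Each isomorphism class is emitted exactly once in canonical graph6 form. We check the class counts against the known enumeration of connected cubic graphs (OEIS A002851), giving the totals of Table~\ref{tab:census} and $7{,}875{,}918$ graphs in all.

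\emph{Per-graph computation.} For each graph $G$ we compute the face-count polynomial $\Phi_G(y)$ and convert it to $\Gamma_G$ via $F=2+\tfrac n2-2g$ (Lemma~\ref{lem:genus-from-faces}).
\begin{itemize}
\item A rotation system of a cubic graph is one bit per vertex, so we enumerate all $2^n$ of them in Gray-code order. Consecutive systems then differ at a single vertex $v$.
\item Flipping the rotation at $v$ changes $\phi_\rho$ only on the three darts entering $v$. The effect on the cycle count of $\phi_\rho$ is computed locally: the three affected darts lie on one, two, or three faces, and re-splicing changes $F$ by $0$ or $\pm2$.
\item We therefore maintain the face structure incrementally and recompute only the orbits through the three changed darts. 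This keeps the cost per step near the length of the affected faces, instead of $O(n)$ for full retracing.
\end{itemize}

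\emph{Cost reduction.} We may fix the rotation at one vertex and double the counts, because reversing every rotation is the mirror embedding with the same face count. This costs at most $2^{21}$ steps per graph at $n=22$, which is feasible in parallel across about $7.1$ million graphs. Optionally, Theorem~\ref{thm:single-twist} lets 2-isomorphic graphs share work, though this is not needed.

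\emph{Validation.} We impose the following checks on every graph:
\begin{itemize}
\item $\Gamma_G(1)=2^n$;
\item the least exponent of $\Gamma_G$ agrees with Brinkmann's \texttt{multi\_genus} minimum genus;
\item the greatest exponent agrees with the maximum genus from Xuong's theorem;
\item the coefficients have no internal gaps, consistent with Duke's interpolation theorem.
\end{itemize}
We also check the mean face count against the identity for $\Exp[F]$ in \eqref{eq:intro-compatible}. Independently, at $n\le16$ we reproduce the polynomials of~\cite{CarrDhaliwalMohar}.

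\emph{Aggregation.} We sort the polynomials as coefficient vectors to count the distinct values per $n$, and we count non-real roots via Sturm sequences.

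\textbf{Main obstacle.} The hard step is computational reliability rather than mathematics. This means the correctness of the incremental face update and the scale at $n=22$. I would first test the incremental routine against naive full retracing on all graphs up to $n=14$, and only then deploy it at scale, with the validation checks above serving as a per-graph certificate.
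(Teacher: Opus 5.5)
Your plan is correct and matches the paper's proof. That proof is exactly an exhaustive enumeration of all $2^n$ rotation systems of every graph in the \texttt{geng}-generated connected cubic census, cross-checked by two independent implementations, and your Gray-code incremental face update (each single-vertex flip changes $F$ by $0$ or $\pm 2$), your mirror-symmetry halving, and your validation checks are sound refinements of that same computation. One small slip: there are $7{,}319{,}447$ graphs at $n=22$, about $7.3$ million rather than $7.1$ million.
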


\begin{proof}[Computational proof]
Exhaustive enumeration of the $2^n$ rotation systems per graph over the connected cubic census.
\end{proof}

\begin{table}[htbp]
\centering
\setlength{\tabcolsep}{12pt}
\renewcommand{\arraystretch}{1.15}
\begin{tabular}{@{}crrr@{}}
\toprule
$n$ & $g_n$ & $p_n$ & $p_n/g_n$ \\
\midrule
$4$  & $1$             & $1$             & $100.0\%$ \\
$6$  & $2$             & $2$             & $100.0\%$ \\
$8$  & $5$             & $5$             & $100.0\%$ \\
$10$ & $19$            & $19$            & $100.0\%$ \\
$12$ & $85$            & $70$            & $82.4\%$ \\
$14$ & $509$           & $387$           & $76.0\%$ \\
$16$ & $4{,}060$       & $2{,}869$       & $70.7\%$ \\
$18$ & $41{,}301$      & $29{,}734$      & $72.0\%$ \\
$20$ & $510{,}489$     & $390{,}097$     & $76.4\%$ \\
$22$ & $7{,}319{,}447$ & $5{,}950{,}365$ & $81.3\%$ \\
\bottomrule
\end{tabular}
\caption{Orientable genus polynomials of connected cubic graphs through $22$ vertices: $g_n$ is the \# of connected cubic graphs and $p_n$ the \# of distinct orientable genus polynomials they realize.}
\label{tab:census}
\end{table}

The genus polynomial is not a complete invariant, and the distinct-polynomial counts of Table~\ref{tab:census} demonstrate this. But the polynomial is also far from redundant, as the witnesses of Section~\ref{sec:witnesses} show.

\section{Faces by length: the short/long decomposition}
\label{sec:decomposition}

The witnesses show that cospectral cubic graphs can have different genus polynomials. This section shows how the difference arises. The genus polynomial is a distribution over faces, and we work with the expected number of faces of a uniformly random rotation system, which by Lemma~\ref{lem:genus-from-faces} controls the average genus through $\Exp[g]=1+\tfrac n4-\tfrac12\Exp[F]$.

The random-embedding viewpoint originates with Stahl, whose bound $\Exp[F]\le n\ln n$ on the expected number of faces~\cite{StahlRegions} was recently sharpened to a linear bound by Campion Loth and Mohar~\cite{CampionLothMohar} and, for random (multi)graphs with given degree sequences of minimum degree at least~$2$, to a logarithmic one~\cite{CampionLothLog}. For connected simple graphs of minimum degree at least~$2$ and maximum degree at most~$3$, Chen and Gao established the linear bound $\Exp[F]\le\tfrac n3+1$~\cite{ChenGao}. In the other direction, any family $\mathcal C$ of cycles, none lying entirely on vertices of degree~$2$, gives a lower bound $\Exp[F]\ge\sum_{C\in\mathcal C}2\prod_{u\in V(C)}(\deg u-1)^{-1}$~\cite[proof of Proposition~13]{CampionLothMultistars}. The decomposition we provide makes the short-cycle contribution to $\Exp[F]$ \emph{exact} and isolates the portion determined by the adjacency spectrum.

\begin{remark}\label{rem:short-faces-simple-cycles}
Let $G$ be a connected cubic graph of girth $\ell$. Every facial boundary walk of length $k<2\ell$ is an oriented traversal of a simple cycle of $G$. A facial walk never backtracks, since it enters $v$ on $(u,v)$ and leaves on $(v,\sigma_v(u))$ with $\sigma_v(u)\neq u$. If it repeated a vertex, the two visits would split it into closed segments with lengths summing to $k$, and a shortest segment over all repeats has distinct interior vertices and length at least $3$, so it is a simple cycle of length at most $k/2<\ell$, contradicting the girth.
\end{remark}

Define the \emph{expected short-face contribution} $S(G):=\Exp[F_{<2\ell}]$, the expected number of faces of length less than $2\ell$. Each simple $k$-cycle is realized as a face with a probability depending only on $k$, so linearity of expectation yields a closed-form expression for $S(G)$.

\begin{corollary}\label{cor:short-face-expectation}
For a connected cubic graph $G$ of girth $\ell$,
\[
S(G)=\sum_{k=\ell}^{2\ell-1}2^{1-k}c_k(G).
\]
\end{corollary}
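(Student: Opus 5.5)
By Remark~\ref{rem:short-faces-simple-cycles}, every face of length $k<2\ell$ is an oriented traversal of a simple $k$-cycle. This gives
\[
F_{<2\ell}(\rho)=\sum_{k=\ell}^{2\ell-1}\ \sum_{C}\ \sum_{\vec C}\mathbf 1[\vec C\text{ is a facial walk of }\rho],
\]
where $C$ ranges over the simple $k$-cycles of $G$ and $\vec C$ over its two orientations. The count is exact because distinct faces are distinct cycles of $\phi_\rho$. A face that traverses $C$ in a given orientation is one cyclic orbit of darts, so it is counted once, and a cycle shorter than $2\ell$ cannot appear twice as a face in the same orientation. Taking expectations and using linearity, it remains to show that each oriented simple $k$-cycle is facial with probability exactly $2^{-k}$.

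Fix an oriented cycle $v_0v_1\cdots v_{k-1}v_0$. It is a facial walk of $\rho$ if and only if $\phi_\rho(v_{i-1},v_i)=(v_i,v_{i+1})$ for every $i$ (indices mod $k$), that is, $\sigma_{v_i}(v_{i-1})=v_{i+1}$. The vertices $v_i$ are distinct because the cycle is simple, so these are $k$ conditions on $k$ distinct vertices. At a cubic vertex with neighbors $v_{i-1},v_{i+1},w$, exactly one of the two rotations has $v_{i-1}\mapsto v_{i+1}$, since the two cyclic orders are $(v_{i-1}\,v_{i+1}\,w)$ and $(v_{i-1}\,w\,v_{i+1})$. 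Rotations are chosen independently and uniformly at each vertex, so the probability is $2^{-k}$. Summing over the two orientations gives $2\cdot 2^{-k}=2^{1-k}$ per unoriented cycle, and summing over the $c_k(G)$ cycles and over $\ell\le k\le 2\ell-1$ yields the formula.

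The main point to check is the exactness of the indicator decomposition, not the probability computation. One must confirm that the two orientations of a cycle are genuinely distinct events that can both occur; they can, in which case the cycle bounds two faces, and both are counted. One must also confirm that no face of length $<2\ell$ is missed or double counted, which Remark~\ref{rem:short-faces-simple-cycles} ensures. A remaining subtlety is $k=2$ or small $k$: $G$ is simple, so the girth satisfies $\ell\ge3$ and each cycle has three or more distinct vertices, and the independence argument applies without change.
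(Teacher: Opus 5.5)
Your proof is correct and is essentially the paper's argument: Remark~\ref{rem:short-faces-simple-cycles} identifies the faces of length $k<2\ell$ with the realized oriented simple $k$-cycles, and each of the $2c_k(G)$ oriented cycles is facial with probability $2^{-k}$, because exactly one of the two rotations at each of its $k$ distinct cubic vertices continues it; linearity of expectation then gives the formula. One aside is false but harmless: in a cubic graph the two orientations of a cycle can never both be faces, since together they would force $\sigma_v(u)=w$ and $\sigma_v(w)=u$ at each vertex $v$ of the cycle, which no cyclic rotation of three darts satisfies (the paper uses exactly this in Proposition~\ref{thm:deterministic-short-cycle-bound}), but linearity of expectation needs no compatibility between the events, so your indicator count and the formula are unaffected.
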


\begin{proof}
Fix $\ell\le k<2\ell$. By Remark~\ref{rem:short-faces-simple-cycles}, the
length-$k$ faces of $\rho$ are exactly the oriented simple $k$-cycles of $G$
realized as cycles of $\phi_\rho$, and $G$ has $2c_k(G)$ of them. An oriented
cycle is realized iff at each of its vertices the rotation continues the walk
along the cycle, and at a cubic vertex exactly one of the two rotations does. By linearity of expectation,
$\Exp[F_{=k}]=2c_k(G)\,2^{-k}=2^{1-k}c_k(G)$. Since no face is shorter than the
girth, summing over $\ell\le k<2\ell$ gives the formula.
\end{proof}

The per-cycle face probability behind this identity is standard and was computed for general graphs by Campion Loth, Hal\'asz, Masa\v{r}\'ik, Mohar, and \v{S}\'amal~\cite[proof of Proposition~13]{CampionLothMultistars}.

\begin{definition}\label{def:long-face-contribution}
The \emph{expected long-face contribution} $L(G):=\Exp[F_{\ge2\ell}]$ is the expected number of faces of length at least $2\ell$.
\end{definition}

\begin{proposition}[Short/long decomposition]\label{prop:short-long-decomposition}
For a connected cubic graph $G$ of girth $\ell$,
\[
\Exp[F]=S(G)+L(G),\qquad S(G)=\sum_{k=\ell}^{2\ell-1}2^{1-k}c_k(G).
\]
\end{proposition}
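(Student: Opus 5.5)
The plan is to split the face count pointwise and then take expectations. For every rotation system $\rho\in\RS(G)$, each cycle of the face permutation $\phi_\rho$ has some length $k$. Since a facial walk never backtracks (Remark~\ref{rem:short-faces-simple-cycles}), no face is shorter than the girth $\ell$. So every face has length either in $[\ell,2\ell-1]$ or at least $2\ell$, and
\[
F(\rho)=F_{<2\ell}(\rho)+F_{\ge 2\ell}(\rho)
\]
holds identically on $\RS(G)$.

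Taking expectations over a uniformly random $\rho$ and using linearity gives
\[
\Exp[F]=\Exp[F_{<2\ell}]+\Exp[F_{\ge2\ell}].
\]
By definition the first term is $S(G)$. By Definition~\ref{def:long-face-contribution} the second is $L(G)$.

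The closed form for $S(G)$ is exactly Corollary~\ref{cor:short-face-expectation}, which I invoke directly. Its content is the following. By Remark~\ref{rem:short-faces-simple-cycles}, each face of length $k<2\ell$ is an oriented simple $k$-cycle, and there are $2c_k(G)$ of these. Each is realized with probability $2^{-k}$, because exactly one of the two rotations at each cubic vertex continues the cycle. Summing gives $2^{1-k}c_k(G)$ for each $k$ from $\ell$ to $2\ell-1$.

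I expect no real obstacle here. The one point to check is that the short-face identification is injective: a realized oriented cycle is exactly one face, and distinct oriented cycles give distinct faces. This holds because a face determines its cyclic dart sequence. The remark and corollary already supply this, so the proposition follows by combining them.
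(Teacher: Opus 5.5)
Your proposal is correct and matches the paper's proof: split $F(\rho)=F_{<2\ell}(\rho)+F_{\ge2\ell}(\rho)$ pointwise, take expectations, and identify the two terms with $S(G)$ and $L(G)$ by definition. The closed form for $S(G)$ comes from Corollary~\ref{cor:short-face-expectation}; the lower bound $\ell$ on face lengths that you invoke is true but is not needed for the split itself.
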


\begin{proof}
Each face has a length, so $F(\rho)=F_{<2\ell}(\rho)+F_{\ge2\ell}(\rho)$ for every $\rho$. Taking expectations,
\[
\Exp[F]=\Exp[F_{<2\ell}]+\Exp[F_{\ge2\ell}]=S(G)+L(G).
\]
\end{proof}

\section{The short part is spectral}
\label{sec:short-spectral}

The short-face contribution, although defined through embeddings, is in fact a spectral quantity, expressible through the non-backtracking matrix.

\begin{theorem}\label{thm:short-part-nonbacktracking-spectral}
For a connected cubic graph $G$ of girth $\ell$,
\[
S(G)=\sum_{k=\ell}^{2\ell-1}\frac{\tr(B^k)}{k\,2^{k}}.
\]
Moreover, if $G$ and $H$ are connected cubic graphs with the same adjacency spectrum, then $\ell(G)=\ell(H)$ and $S(G)=S(H)$. That is, for connected cubic graphs, the short-face contribution is determined by the adjacency spectrum.
\end{theorem}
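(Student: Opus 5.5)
The proof has two parts: an identity expressing $S(G)$ through $\tr(B^k)$, and spectral invariance via the Ihara--Bass formula.

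\emph{The trace identity.} Start from Corollary~\ref{cor:short-face-expectation}, which gives $S(G)=\sum_{k=\ell}^{2\ell-1}2^{1-k}c_k(G)$. It then suffices to show
\[
\tr(B^k)=2k\,c_k(G)\qquad\text{for }\ell\le k<2\ell,
\]
since then $2^{1-k}c_k=\tr(B^k)/(k2^k)$ term by term. The quantity $\tr(B^k)$ counts closed non-backtracking walks of length $k$ with a distinguished starting dart, and such walks are cyclically non-backtracking as well, because $B_{d_k,d_1}=1$ is part of the trace condition. The argument of Remark~\ref{rem:short-faces-simple-cycles} applies verbatim to such walks: it used only non-backtracking and $k<2\ell$. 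So a repeated vertex would split the walk into two closed segments, the shorter of which has length at most $k/2<\ell$ and contains a cycle, which is impossible. Hence every such walk traverses a simple $k$-cycle. Conversely, each simple $k$-cycle yields exactly $2k$ such walks: 2 orientations times $k$ starting darts. These are distinct because $k\ge3$. This proves the identity.

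\emph{Spectral invariance.} Suppose $G$ and $H$ are cubic and cospectral, so they have the same $n$ and hence the same $m=3n/2$. By the Ihara--Bass formula for $3$-regular graphs,
\[
\det(I-uB)=(1-u^2)^{m-n}\det\bigl(I-uA+2u^2I\bigr).
\]
The right-hand side is determined by $n$ and the adjacency eigenvalues, since $\det(I-uA+2u^2I)=\prod_i(1-\lambda_iu+2u^2)$. So the characteristic polynomial of $B$, and therefore every power sum $\tr(B^k)$, coincides for $G$ and $H$.

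The girth also coincides, because $\ell$ is the least $k\ge1$ with $\tr(B^k)\neq0$. For $k<\ell$ there are no closed non-backtracking walks: a shortest such walk, after the cyclic reduction above, would contain a cycle of length at most $k$. At $k=\ell$ the identity gives $\tr(B^\ell)=2\ell c_\ell>0$. With $\ell$ and all $\tr(B^k)$ equal, the displayed formula gives $S(G)=S(H)$.

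I expect the only real obstacle to be the careful handling of non-backtracking closed walks: ensuring the cyclic closure condition is built into $\tr(B^k)$, and that the segment-splitting argument yields a genuine cycle shorter than $\ell$ rather than merely a closed walk. For the latter, a shortest closed segment between repeated vertices is either a simple cycle of length at least $3$, or a length-$2$ backtrack, which is excluded. Otherwise the proof only assembles Corollary~\ref{cor:short-face-expectation} with the Ihara--Bass determinant.
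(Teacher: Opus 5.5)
Your proposal is correct and follows the paper's proof essentially step for step. Both arguments derive $\tr(B^k)=2k\,c_k(G)$ for $\ell\le k<2\ell$ from the argument of Remark~\ref{rem:short-faces-simple-cycles}, substitute it into Corollary~\ref{cor:short-face-expectation}, use the Bass--Ihara determinant to equate all traces for cospectral cubic graphs, and recover the girth as the least $k$ with $\tr(B^k)>0$. Your explicit observation that the closure condition $B_{d_k,d_1}=1$ makes the counted walks cyclically non-backtracking is a detail the paper leaves implicit.
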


\begin{proof}
By Definition~\ref{def:nonbacktracking}, $\tr(B^k)$ counts the closed non-backtracking walks of length $k$ with a distinguished starting dart. For $\ell\le k<2\ell$, such walks are exactly the traversals of simple cycles with a starting dart (the argument of Remark~\ref{rem:short-faces-simple-cycles}), so
\[
\tr(B^k)=2k\,c_k(G),\qquad\text{that is,}\qquad c_k(G)=\frac{\tr(B^k)}{2k}\quad(\ell\le k<2\ell).
\]
Substituting this into Corollary~\ref{cor:short-face-expectation}, we obtain
\[
S(G)=\sum_{k=\ell}^{2\ell-1}2^{1-k}c_k(G)=\sum_{k=\ell}^{2\ell-1}2^{1-k}\,\frac{\tr(B^k)}{2k}=\sum_{k=\ell}^{2\ell-1}\frac{\tr(B^k)}{k\,2^{k}}.
\]

Now, suppose $G$ and $H$ are connected cubic graphs with the same adjacency spectrum. The two graphs have the same $n$, and $m=\tfrac{3n}{2}$, so
by the Bass--Ihara determinant (Remark~\ref{rem:bass-ihara}) their
non-backtracking matrices have
$\tr(B_G^{\,k})=\tr(B_H^{\,k})$ for every $k$. Since $\tr(B^k)=2k\,c_k(G)$
for $k<2\ell$, the traces vanish below the girth and
$\tr(B^\ell)=2\ell\,c_\ell(G)>0$, so each graph's girth is
$\min\{k\ge1:\tr(B^k)>0\}$. Thus $\ell(G)=\ell(H)$, and the displayed formula gives $S(G)=S(H)$.
\end{proof}

\begin{remark}\label{rem:bass-ihara}
For a $d$-regular graph,
$\det(I-uB)=(1-u^2)^{m-n}\det\!\big(I-uA+(d-1)u^2I\big)$
\cite[Theorem~1.1, Corollary~1.2]{KotaniSunada}, originating with
Ihara~\cite{Ihara} and Bass~\cite{Bass}. Thus $n$, $m$, $d$, and the spectrum
of $A$ determine the spectrum of $B$.
\end{remark}

\begin{theorem}\label{thm:long-part-not-spectral}
Among connected cubic graphs, $L(G)$ is not determined by the adjacency spectrum: within each family of Section~\ref{sec:witnesses}, the members are cospectral and share $S(G)$, yet have pairwise distinct average genus, hence pairwise distinct $L(G)$.
\end{theorem}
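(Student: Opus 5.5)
The plan is to combine three earlier facts: cospectrality within each family, spectral invariance of $S(G)$, and the linear relation between $\Exp[F]$ and the average genus. First, for each family of Section~\ref{sec:witnesses} (the three graphs of Theorem~\ref{thm:n16-smallest-strong-witness} and the six graphs of Theorem~\ref{thm:main-witness-class}), I would recall that all members share the adjacency spectrum and are connected cubic. Theorem~\ref{thm:short-part-nonbacktracking-spectral} then gives directly that they share the girth $\ell$ and the value $S(G)$. No new argument is needed here beyond checking the hypotheses of that theorem.

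Second, I would show that $\Exp[F]$ differs pairwise within each family. By Lemma~\ref{lem:genus-from-faces} and linearity of expectation,
\[
\Exp[g]=1+\frac n4-\frac12\Exp[F],
\]
and all members of a family have the same $n$. So distinct average genera are equivalent to distinct $\Exp[F]$. The average genus is read off the genus polynomial as $\Exp[g]=\Gamma_G'(1)/\Gamma_G(1)$, with $\Gamma_G(1)=2^n$.

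For $n=16$ I would take the values $57901/16384$, $57931/16384$ and $57985/16384$ from Table~\ref{tab:n16-smallest-witness}; these are pairwise distinct. For the six graphs on $22$ vertices I would compute $\Gamma_{W_i}'(1)=\sum_g gN_g$ from the polynomials in Figure~\ref{fig:n22-six-witnesses} and check that the six sums are pairwise distinct. This verification is the only place where care is needed, and I regard it as the main, though routine, obstacle: the fact that the genus polynomials are pairwise distinct does not by itself imply that their means are distinct, so the six weighted sums genuinely have to be evaluated.

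Finally, Proposition~\ref{prop:short-long-decomposition} gives $L(G)=\Exp[F]-S(G)$. Within a family $S(G)$ is constant and $\Exp[F]$ takes pairwise distinct values, so $L(G)$ takes pairwise distinct values. Since the members of each family are cospectral, $L(G)$ is not determined by the adjacency spectrum.
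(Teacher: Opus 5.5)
Your proposal is correct and follows essentially the same route as the paper: spectral invariance of $S(G)$ from Theorem~\ref{thm:short-part-nonbacktracking-spectral}, the identity $L(G)=2+\tfrac n2-2\Exp[g]-S(G)$ from Lemma~\ref{lem:genus-from-faces}, and pairwise distinct average genera within each family. The paper reads those averages off Tables~\ref{tab:n16-smallest-witness} and~\ref{tab:n22-long-face-values}; your explicit evaluation of $\Gamma'_{W_i}(1)$ is the computation behind the second table, and it does give pairwise distinct values ($20548752$, $20530512$, $20507952$, $20513472$, $20518032$, $20534592$, each over $2^{22}$).
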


\begin{proof}
By Theorem~\ref{thm:short-part-nonbacktracking-spectral}, $S(G)$ depends only on the adjacency spectrum, so cospectral cubic graphs have equal $S$. By Lemma~\ref{lem:genus-from-faces}, $\Exp[g]=1+\tfrac n4-\tfrac12\Exp[F]$, hence $\Exp[F]=2+\tfrac n2-2\Exp[g]$ and
\[
L(G)=\Exp[F]-S(G)=2+\tfrac n2-2\Exp[g]-S(G).
\]
Within either family of Section~\ref{sec:witnesses} the members share the order $n$ and, being adjacency-cospectral (Theorems~\ref{thm:n16-smallest-strong-witness} and~\ref{thm:main-witness-class}), share $S(G)$. Their average genera are pairwise distinct (Tables~\ref{tab:n16-smallest-witness} and~\ref{tab:n22-long-face-values}), so by the display their $L(G)$ are pairwise distinct. Thus, $L(G)$ is not determined by the spectrum.
\end{proof}

The six graphs of Figure~\ref{fig:n22-six-witnesses} have girth $\ell=3$ and $(c_3,c_4,c_5)=(1,1,3)$, so
\[
S(G)=2^{-2}c_3+2^{-3}c_4+2^{-4}c_5=\tfrac14+\tfrac18+\tfrac{3}{16}=\tfrac{9}{16}.
\]
Thus, the variation in their \emph{average genus} cannot come from short faces and must be from $L(G)$.

\begin{table}[htbp]
\centering
\begin{tabular}{c|c|c|c}
\toprule
graph & $\Exp[g]$ & $\Exp[F]$ & $L(G)$ \\
\midrule
$W_1$ & $4.899204$ & $3.201591$ & $2.639091$ \\
$W_2$ & $4.894855$ & $3.210289$ & $2.647789$ \\
$W_3$ & $4.889477$ & $3.221046$ & $2.658546$ \\
$W_4$ & $4.890793$ & $3.218414$ & $2.655914$ \\
$W_5$ & $4.891880$ & $3.216240$ & $2.653740$ \\
$W_6$ & $4.895828$ & $3.208344$ & $2.645844$ \\
\bottomrule
\end{tabular}
\caption{Average genus, expected face count, and long-face contribution for the six graphs of Figure~\ref{fig:n22-six-witnesses}.}
\label{tab:n22-long-face-values}
\end{table}

\section{Locally compatible circuits and the first non-simple difference}
\label{sec:longcircuits}

To explain the long-face contribution by length, we record, for each closed
non-backtracking circuit, when it can appear as a face of some orientable embedding. Recall that $\sigma_v(u)$ denotes the neighbor of $v$ following $u$ in the rotation at $v$ (Section~\ref{sec:preliminaries}).
As in the proof of Corollary~\ref{cor:short-face-expectation}, a facial walk
entering $v$ from $u$ and leaving toward $w$ requires exactly
$\sigma_v(u)=w$, and a single such constraint ($u\ne w$) is met
by exactly one of the two rotations at a cubic vertex. A walk can be realized as a
face only if all its constraints are met at once.

\begin{definition}\label{def:compatibility}
A \emph{circuit} of length $k$ is a cyclic sequence $C=(d_0,\dots,d_{k-1})$
of pairwise distinct darts, up to cyclic shift, in which the head of $d_i$ is
the tail of $d_{i+1}$ and $d_{i+1}\ne\alpha(d_i)$ (indices mod $k$). These are directed closed non-backtracking walks, not matroid circuits.
Each \emph{pass} $d_i=(u,v)$, $d_{i+1}=(v,w)$ imposes the constraint
$\sigma_v(u)=w$. Let $\mathcal C_k(G)$ be the set of circuits of
length $k$ and $\mathcal C_{\NB}(G)$ their union over $k$. Dart-simplicity of the circuits
gives $k\le2m$, so $\mathcal C_{\NB}(G)$ is finite. Define $\chi(C):=1$ if some
rotation system meets every constraint of $C$ ($C$ is \emph{locally
compatible}) and $\chi(C):=0$ otherwise, and let $r(C)$ be the number of
distinct vertices $C$ visits.
\end{definition}

\begin{theorem}\label{thm:compatible-circuit-face-expectation}
For a connected cubic graph $G$,
\[
\Exp[F]=\sum_{C\in\mathcal C_{\NB}(G)}\chi(C)\,2^{-r(C)},
\qquad
L(G)=\sum_{\substack{C\in\mathcal C_{\NB}(G)\\ |C|\ge 2\ell}}\chi(C)\,2^{-r(C)}.
\]
\end{theorem}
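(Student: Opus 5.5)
The plan is to write $F(\rho)$ as a sum of indicators over $\mathcal C_{\NB}(G)$ and then apply linearity of expectation. First I would check that the faces of any $\rho$ are exactly certain circuits in the sense of Definition~\ref{def:compatibility}. A face is a cycle $(d_0,\dots,d_{k-1})$ of the permutation $\phi_\rho$, so its darts are pairwise distinct. Consecutive darts satisfy $d_{i+1}=\phi_\rho(d_i)=(v,\sigma_v(u))$ when $d_i=(u,v)$. Hence the head of $d_i$ is the tail of $d_{i+1}$, and $d_{i+1}\neq\alpha(d_i)$ because $\sigma_v(u)\ne u$, as already observed in Remark~\ref{rem:short-faces-simple-cycles}. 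Cycles of a permutation are defined up to cyclic shift, exactly as circuits are. So each face is a circuit, and for every $\rho$
\[
F(\rho)=\sum_{C\in\mathcal C_{\NB}(G)}\mathbf 1[C\text{ is a cycle of }\phi_\rho].
\]

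Next I would characterize the event ``$C$ is a cycle of $\phi_\rho$''. It holds if and only if $\phi_\rho(d_i)=d_{i+1}$ for all $i$ (indices mod $k$), that is, if and only if $\rho$ meets every pass constraint $\sigma_v(u)=w$ of $C$. For the converse direction: if all these equalities hold, then $\phi_\rho$ maps $d_0\mapsto d_1\mapsto\cdots\mapsto d_{k-1}\mapsto d_0$. Since the $d_i$ are distinct, this orbit is exactly the cyclic sequence $C$.

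The probability of this event is then computed vertex by vertex. The constraints of $C$ involve only the rotations at the $r(C)$ vertices that $C$ visits. Every visited vertex carries at least one constraint, because $C$ is closed, so each visit is a pass. For a uniformly random $\rho$ the rotations at distinct vertices are independent and uniform on two values. If $\chi(C)=0$, no rotation system meets all constraints, so the probability is $0$. If $\chi(C)=1$, the constraints at each visited vertex $v$ are jointly satisfiable. At a cubic vertex a single constraint $\sigma_v(u)=w$ with $u\ne w$ already determines the rotation, so the constraints at $v$ are met by exactly one of the two rotations. Independence then gives probability $2^{-r(C)}$. Linearity of expectation yields $\Exp[F]=\sum_C\chi(C)2^{-r(C)}$, and the sum is finite by Definition~\ref{def:compatibility}.

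For $L(G)$, I would note that a face realized by $C$ has length $|C|$. Restricting the indicator sum to circuits with $|C|\ge2\ell$ therefore gives $F_{\ge2\ell}(\rho)$, and taking expectations gives the second formula via Definition~\ref{def:long-face-contribution}. The only step needing care is the converse in the characterization, that meeting all constraints makes $C$ an entire cycle of $\phi_\rho$ rather than part of a longer one; this is where the pairwise distinctness of the darts in a circuit is used. The rest is bookkeeping.
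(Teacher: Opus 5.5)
Your proof is correct. For the first identity it is the paper's argument: faces are circuits, a circuit is a face exactly when all its pass constraints hold (with distinctness of darts giving the converse), and independence across the $r(C)$ visited vertices gives $\chi(C)\,2^{-r(C)}$. You also make explicit that every visited vertex carries at least one constraint, which the paper leaves implicit.

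For the second identity your route differs slightly. You restrict the indicator sum to circuits with $|C|\ge 2\ell$ and read off $\Exp[F_{\ge 2\ell}]=L(G)$ directly from Definition~\ref{def:long-face-contribution}. The paper instead uses Remark~\ref{rem:short-faces-simple-cycles} to identify the circuits of length $k<2\ell$ as oriented simple cycles with $\chi=1$ and $r=k$. It sums these to recover $S(G)$ and subtracts via Proposition~\ref{prop:short-long-decomposition}.

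Your version needs no girth argument at all. The paper's detour additionally confirms that the circuit sum agrees with the closed form in Corollary~\ref{cor:short-face-expectation}.
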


\begin{proof}
Each face of $\rho$ is a cycle of the permutation $\phi_\rho$, hence a circuit, so by linearity
\[
\Exp[F]=\sum_{C\in\mathcal C_{\NB}(G)}\Pr[C\text{ is a face of }\rho].
\]
A circuit $C$ is a face of $\rho$ iff every constraint of $C$ holds. The
forward direction is the discussion preceding
Definition~\ref{def:compatibility}, and conversely, if
$\sigma_v(u)=w$ at every pass, then $\phi_\rho(d_i)=d_{i+1}$ for
all $i$ and the pairwise distinct darts of $C$ form a single cycle of
$\phi_\rho$. At a cubic vertex a single constraint determines the rotation, so the constraints of $C$ at $v$ are met by exactly one of the two rotations or by neither. Because the rotations at the $r(C)$ visited
vertices are independent,
\[
\Pr[C\text{ is a face of }\rho]=\chi(C)\,2^{-r(C)}.
\]
For the second identity, the argument of
Remark~\ref{rem:short-faces-simple-cycles} makes the circuits of length
$k<2\ell$ oriented simple $k$-cycles, each with $\chi=1$ (one
satisfiable constraint per vertex) and $r=k$. Thus, they contribute
$\sum_{k=\ell}^{2\ell-1}2c_k(G)\,2^{-k}=S(G)$
(Corollary~\ref{cor:short-face-expectation}), and subtracting via
$L(G)=\Exp[F]-S(G)$ (Proposition~\ref{prop:short-long-decomposition}) gives
$L(G)=\sum_{|C|\ge2\ell}\chi(C)\,2^{-r(C)}$.
\end{proof}

We now separate genuine simple cycles from non-simple candidate facial walks.

\begin{definition}\label{def:cyc-nonsimp}
For $k\ge1$ let $A_k(G)=\sum_{C\in\mathcal C_k(G)}\chi(C)2^{-r(C)}$, split as $A_k=A_{k,\cyc}+A_{k,\nonsimp}$ according to whether $C$ is vertex-simple, and put $L_{\cyc}(G)=\sum_{k\ge2\ell}A_{k,\cyc}(G)$ and $L_{\nonsimp}(G)=\sum_{k\ge2\ell}A_{k,\nonsimp}(G)$.
\end{definition}

\noindent By the proof of Theorem~\ref{thm:compatible-circuit-face-expectation}, $A_k(G)=\Exp[F_{=k}]$ is the expected number of length-$k$ faces, and $L(G)=L_{\cyc}(G)+L_{\nonsimp}(G)$. We now determine, for the graph family in Figure~\ref{fig:n22-six-witnesses}, the first facial length at which the values $A_k(G)$ differ.

\begin{theorem}\label{thm:n22-first-compatible-difference}
For the six graphs of Figure~\ref{fig:n22-six-witnesses}, the values $A_k(G)$ coincide for all $3\le k\le 11$. The first length at which they differ is $k=12$. At length $12$, the simple-cycle contribution is still common,
\[
A_{12,\cyc}(G)=\tfrac{87}{2048},
\]
for all six graphs, and the entire difference lies in the non-simple contribution $A_{12,\nonsimp}(G)$.
\end{theorem}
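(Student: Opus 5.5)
This is a finite computational statement about six explicit graphs, so the proof will be a verified computation. I will organize it so that each piece follows from results already proved and only a bounded enumeration remains. By Theorem~\ref{thm:compatible-circuit-face-expectation} and the remark after Definition~\ref{def:cyc-nonsimp}, $A_k(G)=\Exp[F_{=k}]=\sum_{C\in\mathcal C_k(G)}\chi(C)2^{-r(C)}$. For each $W_i$ and each $3\le k\le 12$, I therefore enumerate all dart-simple closed non-backtracking walks of length $k$ up to cyclic shift. I do this by depth-first search from each starting dart, keeping only the lexicographically least rotation of each walk. For each circuit I record its pass constraints $\sigma_v(u)=w$ at every visited vertex. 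At a cubic vertex I test compatibility by checking that all constraints recorded there are induced by one of the two cyclic orders of the three neighbours. I also record $r(C)$ and whether $C$ is vertex-simple. Every quantity is a dyadic rational, so I will do exact arithmetic and report $A_k$, $A_{k,\cyc}$ and $A_{k,\nonsimp}$ as fractions with denominator $2^{22}$.

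Several parts of the range need no enumeration. The six graphs have girth $\ell=3$ and are cospectral. By Theorem~\ref{thm:short-part-nonbacktracking-spectral} and Remark~\ref{rem:bass-ihara}, $\tr(B^k)$ agrees across them for every $k$. Hence $A_k=2^{1-k}c_k$ agrees for $k\le 5$, with values $\tfrac14,\tfrac18,\tfrac3{16}$. For $k\ge 6$ the circuits are no longer forced to be simple. Here I will split each $A_k$ into its cycle and non-cycle parts. I would check that $A_{k,\cyc}=2^{1-k}c_k$ agrees for $6\le k\le 12$. Theorem~\ref{thm:n16-smallest-strong-witness}-type agreement of cycle counts does not directly cover $n=22$, so I would compute $c_6,\dots,c_{12}$ directly. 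At $k=12$ the target is $2^{-11}c_{12}=\tfrac{87}{2048}$, so $c_{12}=87$ in each graph. The non-simple part, for $k\le 11$, consists of compatible circuits that revisit a vertex. This is a closed walk that passes through some vertex twice using distinct darts, so it contains two closed sub-walks. Its constraints at the repeated vertex must be simultaneously satisfiable by one rotation.

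The main obstacle is showing that $A_{k,\nonsimp}$ agrees for $k\le 11$ and that the totals differ at $k=12$. I would first attempt a structural explanation via the Godsil--McKay switch of Proposition~\ref{prop:witnesses-are-gm}. Non-simple compatible circuits of length at most $11$ are built from two or more short cycles glued at vertices. These cycles have length at most $8$, and the configurations they form should be determined by local substructures that the switch preserves. If that explanation does not go through cleanly, I will fall back on the exhaustive enumeration described above, which is entirely feasible at $n=22$ and $k\le 12$. I would cross-check it in two ways: against brute force over all $2^{22}$ rotation systems, tallying faces by length, and against $\sum_k A_k=\Exp[F]$ from Table~\ref{tab:n22-long-face-values}. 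Finally I exhibit the six distinct values of $A_{12,\nonsimp}$. Since $A_{12,\cyc}$ is common to all six graphs, these six values equal $A_{12}$ minus the same constant, so the six $A_{12}$ values differ and the first difference occurs at $k=12$.
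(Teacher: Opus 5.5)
Your plan is the paper's proof. The paper also treats the statement as a finite computation on the six graphs, records the length-$12$ values in Table~\ref{tab:n22-first-compatible-difference}, and defers the enumeration to the appendix. Your reductions are sound:
\begin{itemize}
\item $A_k=2^{1-k}c_k$ for $k\le5$, via $\tr(B^k)=2k\,c_k$ and cospectrality;
\item $A_{k,\cyc}=2^{1-k}c_k$ for all $k$;
\item hence $c_{12}=87$.
\end{itemize}

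One step, however, would fail as you state it. You intend to finish by exhibiting six distinct values of $A_{12,\nonsimp}$, but there are only two. They are $A_{12,\nonsimp}=\tfrac{3}{128}$ for $W_1,W_5,W_6$ and $A_{12,\nonsimp}=\tfrac{1}{32}$ for $W_2,W_3,W_4$, so $A_{12}$ takes only the values $\tfrac{135}{2048}$ and $\tfrac{151}{2048}$. The theorem's ``first length at which they differ'' asserts only that the six values at $k=12$ are not all equal, and that is what your last step should check. Pairwise separation of the six graphs comes from non-simple circuits of greater length.

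Also, do not count on the Godsil--McKay heuristic for $k\le 11$. The single switches connect the family, so some switch joins the two length-$12$ classes. Any local argument would therefore have to explain why agreement stops exactly at $12$. Moreover, in a cubic graph two passes through one vertex must use some incident edge in both directions. So these circuits are not ``cycles glued at a vertex.'' The exhaustive enumeration is the actual proof, as in the paper.
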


\begin{proof}
The statement is a finite computation, carried out separately for each of the six graphs $G$ on $n=22$ vertices. The length-$12$ values are recorded in Table~\ref{tab:n22-first-compatible-difference}, and the enumeration in Appendix~\ref{app:data}.
\end{proof}

\begin{table}[H]
\centering
\begin{tabular}{c|c|c|c}
\toprule
graph & $A_{12}(G)$ & $A_{12,\cyc}(G)$ & $A_{12,\nonsimp}(G)$ \\
\midrule
$W_1$ & $135/2048$ & $87/2048$ & $3/128$ \\
$W_2$ & $151/2048$ & $87/2048$ & $1/32$ \\
$W_3$ & $151/2048$ & $87/2048$ & $1/32$ \\
$W_4$ & $151/2048$ & $87/2048$ & $1/32$ \\
$W_5$ & $135/2048$ & $87/2048$ & $3/128$ \\
$W_6$ & $135/2048$ & $87/2048$ & $3/128$ \\
\bottomrule
\end{tabular}
\caption{The first facial-length layer distinguishing the six $n=22$ witnesses.}
\label{tab:n22-first-compatible-difference}
\end{table}

At the first distinguishable length, the six graphs separate into two classes, corresponding to the two values of $A_{12}$ in Table~\ref{tab:n22-first-compatible-difference}. The remaining distinctions in $L(G)$ are carried by non-simple contributions at greater lengths.

\section{A genus bound from short cycles}
\label{sec:bounds}

The short-face identity also yields deterministic bounds. Because faces of length $<2\ell$ are simple cycles, short-cycle counts alone constrain how many faces any embedding can have, and hence bound the genus from below.
\begin{proposition}\label{thm:deterministic-short-cycle-bound}
Let $G$ be a connected cubic graph on $n$ vertices with girth $\ell$. For every
$\tau\in\{\ell-1,\ell,\ldots,2\ell-1\}$,
\[
\gamma(G) \ge
1+\frac n4-\frac12\left[
\frac{3n}{\tau+1}
+
\sum_{k=\ell}^{\tau}
\left(1-\frac{k}{\tau+1}\right)c_k(G)
\right],
\]
with the convention that the sum is empty for $\tau=\ell-1$. Since
$\gamma(G)$ is an integer, the bound also holds with the right side rounded up.
\end{proposition}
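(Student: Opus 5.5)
Fix a rotation system $\rho$ achieving $\gamma(G)$. By Lemma~\ref{lem:genus-from-faces} it suffices to prove the face-count bound
\[
F(\rho)\le \frac{3n}{\tau+1}+\sum_{k=\ell}^{\tau}\Bigl(1-\frac{k}{\tau+1}\Bigr)c_k(G)
\]
for every $\rho\in\RS(G)$. The proof is a dart-counting (weighted Euler) argument, splitting faces at length $\tau+1$.

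The first step uses the total number of darts. Every dart lies on exactly one face, so
\[
\sum_{f}|f|=2m=3n.
\]
Write $F(\rho)=F_{\le\tau}(\rho)+F_{>\tau}(\rho)$. Each face of length at least $\tau+1$ uses at least $\tau+1$ darts. Hence
\[
(\tau+1)F_{>\tau}\le 3n-\sum_{k=\ell}^{\tau}kF_{=k},
\]
and therefore
\[
F\le \frac{3n}{\tau+1}+\sum_{k=\ell}^{\tau}\Bigl(1-\frac{k}{\tau+1}\Bigr)F_{=k}(\rho).
\]
Faces shorter than $\ell$ do not exist, since a facial walk is non-backtracking and any closed non-backtracking walk of length below $\ell$ would contain a cycle shorter than the girth.

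The second step bounds each $F_{=k}(\rho)$ for $\ell\le k\le\tau$. Since $\tau\le 2\ell-1<2\ell$, Remark~\ref{rem:short-faces-simple-cycles} shows that every face of length $k$ is an oriented traversal of a simple $k$-cycle. The key observation, and the only point needing care, is that a given simple cycle $Z$ cannot be a face in both orientations.

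Suppose both orientations of $Z$ were faces, and take a vertex $v$ of $Z$ with cycle-neighbours $u,w$. The forward face requires $\sigma_v(u)=w$ and the backward face requires $\sigma_v(w)=u$. At a cubic vertex with third neighbour $x$, the rotation $(u\,w\,x)$ gives $\sigma_v(w)=x$. So the two requirements are incompatible, and both faces can occur only if $v$ has degree $2$, which is impossible in a cubic graph.

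It follows that distinct length-$k$ faces come from distinct unoriented cycles, so $F_{=k}(\rho)\le c_k(G)$. The coefficients $1-k/(\tau+1)$ are nonnegative for $k\le\tau$, so we may substitute this bound into the inequality from the first step.

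Combining this with Lemma~\ref{lem:genus-from-faces}, $\gamma(G)=1+\tfrac n4-\tfrac12F(\rho)$, yields the stated inequality. The case $\tau=\ell-1$ gives $F\le 3n/\ell$, and the sum is then empty. Integrality of $\gamma$ gives the rounded-up version.

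The main obstacle is the step $F_{=k}\le c_k$, since the naive bound $2c_k$ is too weak to give the stated inequality. The orientation argument above resolves it.
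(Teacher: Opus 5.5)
Your proof is correct and takes essentially the same approach as the paper. Both arguments use the dart count $\sum_f |f| = 3n$, charge every face of length $>\tau$ at least $\tau+1$ darts, and bound $F_{=k}(\rho)\le c_k(G)$ for $k<2\ell$ via Remark~\ref{rem:short-faces-simple-cycles} together with the observation that the two orientations would require the incompatible rotations $\sigma_v(u)=w$ and $\sigma_v(w)=u$ at a cubic vertex. The only detail you leave implicit, and the paper states, is that a single oriented cycle accounts for at most one face because distinct faces are disjoint cycles of $\phi_\rho$; this is immediate.
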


\begin{proof}
By Lemma~\ref{lem:genus-from-faces},
$\gamma(G)=1+\tfrac n4-\tfrac12\max_{\rho}F(\rho)$, so it suffices to bound
$F(\rho)$ for every $\rho$. Let $f_k$ be the number of length-$k$ faces.
Face lengths sum to $2m=3n$. For $k<2\ell$ every length-$k$ face traverses a
simple $k$-cycle (Remark~\ref{rem:short-faces-simple-cycles}), so $f_k=0$ for
$k<\ell$. Also, at most one face traverses a given cycle, since distinct faces
are disjoint cycles of $\phi_\rho$, while the two orientations would force
$\sigma_v(u)=w$ and $\sigma_v(w)=u$ at a common vertex, which no
rotation satisfies. Hence $f_k\le c_k(G)$ for $k<2\ell$. Every face of
length ${>}\tau$ has length at least $\tau+1$, so
\[
F(\rho)=\sum_{k=\ell}^{\tau}f_k+F_{>\tau}
\le\sum_{k=\ell}^{\tau}f_k+\frac{3n-\sum_{k=\ell}^{\tau}kf_k}{\tau+1}
=\frac{3n}{\tau+1}+\sum_{k=\ell}^{\tau}\Bigl(1-\frac{k}{\tau+1}\Bigr)f_k,
\]
and since each coefficient $1-\tfrac{k}{\tau+1}$ is positive, replacing
$f_k$ by $c_k(G)$ only increases the right side. Substituting the resulting
bound on $\max_\rho F(\rho)$ gives the claim. For $\tau=\ell-1$ the sums are
empty and the bound is $F(\rho)\le 3n/\ell$.
\end{proof}
At the base cutoff $\tau=\ell-1$, the proposition is the classical girth bound $\gamma(G)\ge 1+\tfrac n4-\tfrac{3n}{2\ell}$, which follows from Euler's formula (see, e.g., \cite{MoharThomassen}), and counting arguments of this kind are standard for bounding the genus of a fixed graph. The cutoffs $\tau\ge\ell$ improve it with cycle counts, and the strongest cutoff has a closed form. Writing $D(\tau)$ for the bound on $\max_\rho F(\rho)$ above, a direct computation gives
\[
D(\tau+1)-D(\tau)=\frac{\sum_{k=\ell}^{\tau+1}k\,c_k(G)-3n}{(\tau+1)(\tau+2)},
\]
so increasing the cutoff strengthens the bound exactly while $\sum_{k=\ell}^{\tau}k\,c_k(G)$ stays below $3n$. The bound is therefore strongest at the largest such $\tau$, and at the base cutoff when $\ell\,c_\ell(G)\ge 3n$.

\begin{example}[The bound on the Pappus graph]\label{ex:pappus}
The Pappus graph $P$ is a connected cubic graph on $n=18$ vertices with girth $\ell=6$ and short-cycle counts
\[
c_6=18,\quad c_7=0,\quad c_8=54,\quad c_9=0,\quad c_{10}=54,\quad c_{11}=0 .
\]
Here, $\ell\,c_\ell=6\cdot 18=108>54=3n$, so the base cutoff is strongest, and the bound is the girth bound
\[
\gamma(P)\ \ge\ 1+\frac{18}{4}-\frac{3\cdot 18}{2\cdot 6}\ =\ 1.
\]
It is attained, since the Pappus graph is toroidal and so $\gamma(P)=1$. Already at $\tau=6$ the over-count $f_6\le c_6$ is far from tight and the bound is only $\tfrac{5}{14}\approx 0.357$, which still rounds up to $\gamma(P)\ge1$. In particular, the bound is not monotone in the cutoff $\tau$.
\end{example}

We tested the deterministic bound across the connected cubic census through $20$ vertices, taking for each graph the strongest value over all cutoffs $\ell\le\tau<2\ell$ (Table~\ref{tab:deterministic-short-cycle-bound-performance}). Through $20$ vertices, the bound is always within $2$ of the true genus, exact for $26.9\%$ of the $510{,}489$ graphs on $20$ vertices and within $1$ for $89.6\%$.

\begin{table}[htbp]
\centering
\begin{tabular}{c|c|c|c|c|c|c}
\toprule
$n$ & $\#G$ & mean $\gamma$ & mean gap & median gap & max gap & \% tight \\
\midrule
$8$  & $5$      & $0.400$ & $0.000$ & $0$ & $0$ & $100.0$ \\
$10$ & $19$     & $0.526$ & $0.263$ & $0$ & $1$ & $73.7$ \\
$12$ & $85$     & $0.624$ & $0.306$ & $0$ & $1$ & $69.4$ \\
$14$ & $509$    & $0.758$ & $0.460$ & $0$ & $2$ & $56.0$ \\
$16$ & $4060$   & $0.884$ & $0.559$ & $1$ & $2$ & $47.6$ \\
$18$ & $41301$  & $1.034$ & $0.682$ & $1$ & $2$ & $38.2$ \\
$20$ & $510489$ & $1.213$ & $0.835$ & $1$ & $2$ & $26.9$ \\
\bottomrule
\end{tabular}
\caption{Performance of the deterministic short-cycle lower bound through $20$ vertices.}
\label{tab:deterministic-short-cycle-bound-performance}
\end{table}

\section{Synthesis: between spectrum and matroid}
\label{sec:synthesis}

We have seen that when the mean face count is split as $\Exp[F]=S(G)+L(G)$, the short part $S(G)$ is determined by spectral data (Theorem~\ref{thm:short-part-nonbacktracking-spectral}). Also, the entire polynomial $\Gamma_G$ is determined (for cubic $G$) by the cycle matroid (Theorem~\ref{thm:matroid-invariance}). Further, the witnesses of Section~\ref{sec:witnesses} show that the polynomial carries information the spectrum does not (Theorems~\ref{thm:n16-smallest-strong-witness} and~\ref{thm:main-witness-class}). At the level of the expected face count, then, the spectrum determines the short term $S(G)$, and the part it misses is the long term $L(G)$, whose first difference Section~\ref{sec:longcircuits} identifies as locally compatible non-simple facial structure. The infinite family of Theorem~\ref{thmD} gives one boundary case: its pairs are separated at the minimum genus by a Godsil--McKay switch that necessarily changes the cycle matroid (Remark~\ref{rem:gm-matroid}).

\begin{remark}[The role of connectivity]\label{rem:connectivity}
The cycle matroid result behaves differently depending on connectivity. For $2$-connected graphs that are not $3$-connected, non-isomorphic cubic graphs can share a cycle matroid. For $3$-connected graphs, Whitney's theorem shows that the cycle matroid determines the graph, so the matroid bound reduces to the statement that the genus polynomial is a graph invariant.
\end{remark}

\begin{remark}[Incompleteness]\label{rem:incompleteness}
The census shows first that the genus polynomial does not determine the \emph{graph}. That it does not determine the \emph{cycle matroid} is strictly stronger. Example~\ref{ex:converse} supplies one among \emph{$3$-connected} cubic graphs, where Whitney's theorem makes the cycle matroid determine the graph, so there, non-isomorphic graphs force non-isomorphic matroids. Thus, the implication ``$M(G)\cong M(H)\Rightarrow\Gamma_G=\Gamma_H$'' of Theorem~\ref{thmA} is proper.
\end{remark}

\begin{example}[A converse witness]\label{ex:converse}
A single pair demonstrates both Remark~\ref{rem:incompleteness} and Theorem~B$'$. The two $3$-connected cubic graphs on $12$ vertices with \texttt{graph6} codes \verb|K??FEbGL@WB_| and \verb|K?ABE`Ke@WEO| share the orientable genus polynomial
\[
\Gamma(x)=64x+1408x^{2}+2624x^{3},
\]
yet are non-isomorphic: they have $7938$ and $8100$ spanning trees, respectively. The unequal spanning-tree counts prove directly that the cycle matroids are non-isomorphic, so the genus polynomial does not determine the cycle matroid even among $3$-connected cubic graphs (Remark~\ref{rem:incompleteness}). Since the spanning-tree count of a cubic graph is determined by its adjacency spectrum, the two graphs also have distinct adjacency spectra. Twelve is the smallest order at which two non-isomorphic connected cubic graphs share a genus polynomial. 
\end{example}
  \section{Conclusion}
\label{sec:questions}

We now return to the question from the introduction of how strong an
invariant the genus polynomial of a cubic graph is. We have seen the
answer is two-sided. It is not determined by the adjacency spectrum and,
for finite simple $2$-connected cubic graphs, it is determined
by the cycle matroid. We now note some remaining questions. Theorem~\ref{thmA} needs the cubic hypothesis, and
Remark~\ref{rem:cubic-necessary} shows it cannot be dropped. One
can naturally ask what lies between: outside of cubic graphs, for which
classes is the genus polynomial a cycle-matroid invariant? How often does the genus polynomial distinguish connected cubic graphs,
asymptotically? The distinct fraction $p_n/g_n$ of
Table~\ref{tab:census} first decreases and then increases over the
computed range, staying between $70\%$ and $100\%$. Does it converge, and if so
to what limit? Finally, Theorem~\ref{thmC} expresses the short part $S(G)$ through the
non-backtracking matrix, and we do not know an analogue for the long
part. Is $L(G)$, or its compatible-circuit refinement, expressible
through an algebraic object similar to the non-backtracking matrix? By
Theorem~\ref{thm:long-part-not-spectral}, any such operator cannot be a
function of the adjacency matrix alone.

\appendix

\section{Computational methods and data}
\label{app:data}

\subsection{Reproducibility}
We computed every genus polynomial by enumerating all $2^n$ rotation systems, tracing the face permutation $\phi_\rho=\sigma_\rho\alpha$, and reading the genus from the face count by Lemma~\ref{lem:genus-from-faces}. The enumeration is exhaustive because the polynomial counts embeddings at every genus, not just the least. Two independent implementations agree on every printed polynomial.

\subsection{Supplementary data}
The scripts, run logs, and certificate files for the computations below are
archived at \url{https://doi.org/10.5281/zenodo.21639035}. The per-graph census
output is too large to deposit and is regenerated by those scripts. The small
certificates are recorded here.
\begin{enumerate}
\item[\textup{(D1)}] The census of all connected cubic graphs through $22$ vertices, generated by \texttt{geng}~\cite{McKayPiperno}, each tagged with its genus polynomial and an exact non-real-root test. The counts are those of Table~\ref{tab:census}.
\item[\textup{(D2)}] The three graphs of Table~\ref{tab:n16-smallest-witness} and the six graphs of Figure~\ref{fig:n22-six-witnesses}, with the shared invariants listed in Theorems~\ref{thm:n16-smallest-strong-witness} and~\ref{thm:main-witness-class}, and the search over orders $8$ to $16$ showing no such triple occurs earlier.
\item[\textup{(D3)}] For each of the eight single-switch pairs, a four-element set meeting Definition~\ref{def:gm-switch} and the labeling that realizes the switch. For the other ten pairs we give the completed negative searches over sizes $2,4,6$.
\item[\textup{(D4)}] The rotation systems realizing the two embeddings of Lemma~\ref{lem:if-base}, used in the induction of Lemma~\ref{lem:if-upper}. We verified $\gamma(G_1)=2$ and $\gamma(H_1)=1$ with PAGE~\cite{MetzgerUlrigg}.
\item[\textup{(D5)}] For the six graphs of Figure~\ref{fig:n22-six-witnesses}, the expected face counts by length, split into simple and non-simple. The six share every simple-cycle count, so their first difference, at length $12$, is entirely non-simple.
\item[\textup{(D6)}] The two graphs of Example~\ref{ex:converse}, with codes \verb|K??FEbGL@WB_| and \verb|K?ABE`Ke@WEO|, sharing $64x+1408x^2+2624x^3$ but with $7938$ and $8100$ spanning trees, and the script that found them.
\end{enumerate}

\end{document}